\documentclass[12pt,reqno,a4paper]{amsart}
\usepackage[top=3.5cm,bottom=3.5cm,left=2.5cm,right=2.5cm,heightrounded,bindingoffset=0mm]{geometry}
\usepackage[utf8]{inputenc}
\usepackage[T1]{fontenc}
\usepackage[english]{babel}
\usepackage{microtype}
\usepackage{lmodern}
\usepackage{amsmath,amsthm,amssymb,mathrsfs,thmtools}
\usepackage{enumerate} 
\usepackage{graphicx}
\usepackage{svg}
\usepackage{color}
\usepackage{tikz}

\newcommand{\bq}{\begin{equation}}
\newcommand{\eq}{\end{equation}}
\newcommand{\bqa}{\begin{eqnarray*}}
\newcommand{\eqa}{\end{eqnarray*}}

\DeclareMathOperator{\supp}{supp}

\DeclareSymbolFont{pletters}{OT1}{cmr}{m}{sl}
\DeclareMathSymbol{s}{\mathalpha}{pletters}{`s}

\def\Jac{{\rm D}}

\def\mez{\frac{1}{2}}

\def\Rr{\mathbb{R}}
\def\Nn{\mathbb{N}}
\def\Zz{\mathbb{Z}}

\def\E{\mathbb E}

\def\L1{\mathcal{L}^{(1)}}
\def\L2{\mathcal{L}^{(2)}}
\def\L3{\mathcal{L}^{(3)}}

\def\T{\mathbb{T}}

\numberwithin{equation}{section}

\def\Op{\operatorname{Op}}

\newcommand{\an}[1]{\langle #1 \rangle}
\newcommand{\ring}[1]{\mathring{#1}}
\def\dVolg{\mathrm{d}\!\operatorname{Vol}_g}

\title[No quasi-invariance under transport flows]{Obstruction to quasi-invariance of Gaussian measures under transport flows on Riemannian Manifolds}
\date{\today}

\author[Anne-Sophie de Suzzoni]{Anne-Sophie de~Suzzoni$^{1,2}$}
\address{
$^1$Laboratoire de Mathématiques et de Modélisation d'\'Evry (LaMME)\\
Université d'\'Evry\\
23 Bd François Mitterrand, 91000 Évry-Courcouronnes, France
}
\address{$^2$Institut universitaire de France (IUF)}
\email[A.-S. de Suzzoni]{anne-sophie.desuzzoni@univ-evry.fr}

\author[Mickaël Latocca]{Micka\"el Latocca$^1$}
\email[M. Latocca]{mickael.latocca@univ-evry.fr}

\makeatletter
\def\l@subsection{\@tocline{2}{0pt}{2.5pc}{5pc}{}}
\makeatother
\makeatletter
\def\@tocline#1#2#3#4#5#6#7{\relax
  \ifnum #1>\c@tocdepth 
  \else
    \par \addpenalty\@secpenalty\addvspace{#2}%
    \begingroup \hyphenpenalty\@M
    \@ifempty{#4}{%
      \@tempdima\csname r@tocindent\number#1\endcsname\relax
    }{%
      \@tempdima#4\relax
    }%
    \parindent\z@ \leftskip#3\relax \advance\leftskip\@tempdima\relax
    \rightskip\@pnumwidth plus4em \parfillskip-\@pnumwidth
    #5\leavevmode\hskip-\@tempdima
      \ifcase #1
       \or\or \hskip 1em \or \hskip 2em \else \hskip 3em \fi%
      #6\nobreak\relax
    \dotfill\hbox to\@pnumwidth{\@tocpagenum{#7}}\par
    \nobreak
    \endgroup
  \fi}
\makeatother

\makeatletter
\renewcommand{\@@and}{\&}
\makeatother

\usepackage{hyperref}
\hypersetup{colorlinks={true},linkcolor={blue},citecolor=blue}
\makeatletter
\AddToHook{cmd/appendix/before}{\def\cref@section@alias{appendix}}
\makeatother
\usepackage[nameinlink,noabbrev,capitalise]{cleveref}

\theoremstyle{plain}
\newtheorem{theo}{Theorem}[section]
\newtheorem{prop}[theo]{Proposition}
\crefname{prop}{Proposition}{Propositions}
\newtheorem{lemm}[theo]{Lemma}
\crefname{lemm}{Lemma}{Lemmas}
\newtheorem{coro}[theo]{Corollary}

\newtheorem{assumption}[theo]{Assumption}
\newtheorem{defi}[theo]{Definition}
\theoremstyle{definition}
\newtheorem{rema}[theo]{Remark}

\thanks{\textit{Acknowledgements.} We are grateful to Nikolay Tzvetkov for his insightful suggestions at an early stage of this project. \\
AS de Suzzoni is supported by Soutien recherche et attractivité – PR et DR" 2026 from Université Paris-Saclay and by ANR No-Limit ANR-25-CE40-1380}
\keywords{Gaussian measures; Transport equation; Quasi-invariance; pseudo-differential calculus}
\subjclass[2020]{Primary 60G15, 35F05; Secondary 35S05, 47B10, 35Q35, 58D05}

\makeatletter
\renewcommand{\@settitle}{%
  \begin{center}
    \baselineskip16\p@\relax
    \LARGE\bfseries
    \@title
    \par
  \end{center}%
}
\makeatother

\makeatletter
\patchcmd{\@setauthors}
  {\MakeUppercase{\authors}}
  {\large\authors}
  {}
  {\PackageWarning{mydocument}{Failed to patch \string\@setauthors}}
\makeatother

\begin{document}
\newcommand{\annso}[1]{{\color{orange} \textbf{AS:} #1}}
\newcommand{\mickael}[1]{{\color{teal} \textbf{M:} #1}}

\begin{abstract}
We prove an obstruction to quasi-invariance of Gaussian fields under divergence free transport flows on Riemannian manifolds.  For the centered Gaussian field with covariance $(1-\Delta_g)^{-\alpha}$, $\alpha>\frac{d}{2}$, quasi-invariance under the transport flow is equivalent to invariance. This happens precisely when the flow acts by isometries, or equivalently when the underlying vector field is Killing. The proof leverages the Feldman--H\'ajek theorem and utilizes localized pseudo-differential computations. On $\mathbb R^d$ this leaves only rigid motions, while on flat tori this leaves only translations.
\end{abstract}

\hrule
\vspace{0.5cm}
\maketitle
\hrule 
\vspace{0.4cm}

\section{Introduction}

Consider a smooth, connected, boundaryless and complete Riemannian manifold $(M,g)$ of dimension~\(d\geq 2\). We denote its tangent bundle~\(TM\).

Let~\(u : \Rr\times M \rightarrow TM\) be a~\(L^1_{\rm loc}(\Rr_t,\mathcal C_x^{1,\varepsilon}(M))\) divergence-free vector field.   
It has two variables: time~\(t\in\Rr\) and space~\(x\in M\). We consider the transport equation
\begin{equation}\label{eq:transport}
    \partial_t \Omega + u \cdot \nabla \Omega = 0
\end{equation}
where the unknown~\(\Omega : \Rr \times M \rightarrow \Rr\) is a scalar field; and $u\cdot \nabla = u^j\partial_j$ in coordinates.

Let $\mu_{\alpha}$ be a centered Gaussian measure on~\(L^2_{\mathrm{loc}}(M)\) with covariance operator 
$(\operatorname{id}-\Delta_g)^{-\alpha}$, where $-\Delta_g$ is the Laplace--Beltrami operator and~\(\alpha > \frac{d}{2}\). We recall that this means that for all test functions~\(f,g\), we have
\[
\int \an{f,\phi}_{L^2(M)} \an{\phi,g}_{L^2(M)} \mathrm{d}\mu_\alpha (\phi) = \an{f,(1-\Delta_g)^{-\alpha} g}_{L^2(M)}.
\]
We give an explicit realization of such measures in~\cref{subsec:Result} for compact manifolds and the Euclidean space; and refer to~\cref{As.FH} for generic complete Riemannian manifolds.

We are concerned with solving \eqref{eq:transport} for data taken~\(\mu_\alpha\) almost surely.

Equation~\eqref{eq:transport} may be solved by the method of characteristics. For all~\(x\in M\) and~$t_0\in \Rr$, let~\(t\mapsto X_{t,t_0}(x)\) be the solution to the ODE
\begin{equation}\label{eq:defX}
\left \lbrace{\begin{array}{rcl}
    \partial_t X_{t,t_0}(x) &=& u(t,X_{t,t_0}(x)), \\
    X_{t_0,t_0}(x) &=& x.
\end{array}} \right.
\end{equation}
We write $X_t=X_{t,0}$. Since $u \in L^1_{\rm loc}(\mathbb{R},\mathcal{C}^{1,\varepsilon})$ in particular for all $T>0$ we have 
\[
    \int_0^T \left(\|u(\tau)\|_{L^{\infty}(M)} + \|\nabla u(\tau)\|_{L^{\infty}(M)}\right)\mathrm{d}\tau < \infty.
\]
It follows from Picard--Lindelöf's theorem and the completeness of $(M,g)$ that the flow $X_{t,t_0}$ (and thus~\(X_t\)) is defined globally in time and space. Moreover, if $t$ is sufficiently close to $t_0$, then the map~\(x\mapsto X_{t,t_0}(x)\) is a~\(\mathcal C^{1,\varepsilon}\) diffeomorphism. Using that for any~\(s\in (t_0,t)\) we have
\[
X_{t,t_0} = X_{t,s} \circ X_{s,t_0} , 
\]
we get that~\(X_{t,t_0}\) is a~\(\mathcal C^{1,\varepsilon}\) diffeomorphism at all times bigger than~\(t_0\), and with a similar reasoning, this applies to times smaller than~\(t_0\).

We recall that the Cauchy problem
\[
\left \lbrace{\begin{array}{c}
\partial_t \Omega + u\cdot \nabla \Omega = 0 \\
\Omega (t_0) = \Omega_0
\end{array}} \right.
\]
is solved by remarking that
\[
   \frac{\mathrm{d}}{\mathrm{d}t}\Omega(t,X_{t,t_0}(x)) = \partial_t \Omega (t,X_{t,t_0}(x)) + \partial_t X_{t,t_0}(x) \cdot \nabla \Omega(t,X_{t,t_0}(x)) = 0
\]
and therefore
\begin{equation}\label{eq:defsemigroup}
    \Omega(t,x) = \Omega_0(X_{t,t_0}^{-1}(x)).
\end{equation}
We denote~\(S(t,t_0)\) the composition with~\(X_{t,t_0}^{-1}\) such that~\(\Omega(t) = S(t,t_0) \Omega_0\). Note that the~\(L^2(M)\) norm is \emph{a priori} preserved by~\eqref{eq:transport}. 

We denote $S(t)$ the composition with $X_{t}^{-1}$. Note that~\(S(t) = S(t,t_0) \circ S(t_0)\).

We are interested in the quasi-invariance of~\(\mu_\alpha\) under the action of the flow~\(S(t)\). In other words, we wish to characterize the absolute continuity of~\(\mu_\alpha^t : = S(t)_* \mu_\alpha\) with respect to~\(\mu_\alpha\).

\subsection{Quasi-invariance of Gaussian measures in PDE}

The question of quasi-invariance of Gaussian measures under the flow of nonlinear PDEs has received considerable attention in recent years, particularly in the dispersive Hamiltonian setting. We mention here only a selection of representative results. The systematic study of this problem was initiated by Tzvetkov in \cite{tzvetkov2015}. Subsequent developments have relied on several related mechanisms, including modified energies, normal-form reductions, nonlinear smoothing estimates, and quantitative control of Radon--Nikodym densities; see, among many others,
\cite{ohTzvetkov2017bis,ohSosoeTzvetkov2018,ohTzvetkov2020,forlanoTrenberth2019,planchonTzvetkovVisciglia2020,ohTsutsumiTzvetkov2019,debusscheTsutsumi2021,ohSeong2021,forlanoSeong2022,planchonTzvetkovVisciglia2023}.
In several more recent low-regularity problems, these techniques have been complemented by probabilistic and variational arguments, including applications of the Boué--Dupuis formula
\cite{gunaratnamOhTzvetkovWeber2022,forlanoTolomeo2025a,knezevitch2025a,knezevitch2025b,knezevitch2025c,sunTzvetkov2025,tolomeoVisciglia2025}.
Related ideas have also entered the analysis of geometric equations, notably the binormal flow through the Hasimoto transform and the associated cubic nonlinear Schrödinger equation \cite{banicaLucaTzvetkovVega2025}.

Much less is known about mechanisms leading to failure of quasi-invariance. In \cite[Theorem~1.6]{ohSosoeTzvetkov2018}, it is shown that, for $s>\frac{1}{2}$ and every nonzero time, the Gaussian measure under consideration is not quasi-invariant under the dispersionless pointwise flow
$i\partial_tu=|u|^2u$. For the cubic Szeg\H{o} equation, \cite{coeTolomeo2025} establishes a transition between quasi-invariance and singularity: the relevant Gaussian measure is quasi-invariant for $s>1$, whereas, for $\mez<s<1$ with $s\neq\frac{3}{4}$, the initial and transported measures are mutually singular for almost every time.

Comparatively, few results study the preservation of Gaussian structure by fluid equations. In vorticity formulation, spatial white noise on $\mathbb T^2$ is known to be stationary for suitable weak solutions of the two-dimensional Euler equation \cite{albeverioCruzeiro1990,flandoli2018}. The work \cite{deSuzzoni2022} establishes an asymptotic propagation of independence and Gaussian moments of Fourier modes in a large-torus asymptotic. On the other hand, \cite{bedrossianLatocca2026} proved non-invariance for a broad class of sufficiently regular Gaussian measures.

We emphasize the fact that most of these results are on compact settings. Part of the originality of the present work is that it explains how to derive global information from local information by localization of the Feldman–Hájek criterion and its compatibility with pseudo-differential calculus on a possibly non-compact manifold.

\subsection{Main result}\label{subsec:Result}

In the rest of this paper, we make the following assumption on~\(M\) and~\(\alpha\).

\begin{assumption}\label{as}
For all~\(\theta_1\) and~\(\theta_2\) in~\(\mathcal C_c^\infty (M)\) such that~\(\supp(\theta_1)\) is included in the interior of \( \theta_2^{-1}(\{1\})\), we have that operator
\[
    \theta_1 (1-\Delta_g)^{\frac{\alpha}{2}} (1-\theta_2) 
\]
is Hilbert--Schmidt on~\(L^2(M)\).
\end{assumption}

\begin{rema}
This assumption is satisfied in many cases. First, if~\(\alpha \in 2\Nn\); second, if~\(M\) is compact; third, if $M$ is the Euclidean space endowed with the Euclidean metric. We refer to \cref{lemma-pseudodiff-power}~(\textit{iv}) for the proofs.
\end{rema}

\begin{theo}[Generic Riemannian manifold case]\label{thm.riemannian}  Let $\alpha >\frac{d}{2}$  Assume that~\(u\) belongs to~\(L^1_{\mathrm{loc}}(\mathbb{R},\mathcal C^{\alpha}(M))\)\footnote{When $\alpha$ is not an integer, $\mathcal{C}^{\alpha}$ is understood as $\mathcal{C}^{\lfloor \alpha\rfloor, \alpha - \lfloor \alpha \rfloor}$}.
The following statements are equivalent: 
\begin{enumerate}[(i)]
    \item measure $\mu_{\alpha}$ is quasi-invariant under the flow of~\eqref{eq:transport};
    \item measure $\mu_{\alpha}$ is invariant under the flow of~\eqref{eq:transport};
    \item vector field~$u(t, \cdot)$ is a Killing vector field of $(M,g)$ for almost all $t$.
\end{enumerate}
\end{theo}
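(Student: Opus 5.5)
The plan is to prove (iii)$\Rightarrow$(ii)$\Rightarrow$(i)$\Rightarrow$(iii). The middle implication is trivial.

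\emph{Step 1: (iii)$\Rightarrow$(ii).} If $u(t,\cdot)$ is Killing for almost every $t$, then each $X_{t,t_0}$ is an isometry of $(M,g)$, since the flow of a time-dependent Killing field preserves $g$. Isometries preserve the volume form, so $S(t)$ is unitary on $L^2(M)$. Since $\Delta_g$ commutes with pullback by isometries, $S(t)$ also commutes with $(1-\Delta_g)^{-\alpha}$. Hence the covariance of $S(t)_*\mu_\alpha$ is $S(t)(1-\Delta_g)^{-\alpha}S(t)^* = (1-\Delta_g)^{-\alpha}$. Two centered Gaussian measures with the same covariance coincide, so $\mu_\alpha$ is invariant.

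\emph{Step 2: (i)$\Rightarrow$(iii), reduction.} By the Feldman--H\'ajek theorem, quasi-invariance at time $t$ means the following. The Cameron--Martin spaces coincide, namely $S(t)H^{\alpha}=H^{\alpha}$ with equivalent norms. In addition, writing $C=(1-\Delta_g)^{-\alpha}$, the operator
\[
C^{-1/2}S(t)\,C\,S(t)^{*}C^{-1/2}-\operatorname{id}
\]
must be Hilbert--Schmidt on $L^2(M)$. Since $u$ is divergence free, $S(t)^*=S(t)^{-1}$. I would localize this condition. Fix $\theta_1,\theta_2\in\mathcal C^\infty_c(M)$ as in \cref{as}, and multiply on both sides by $\theta_1$ and by cutoffs adapted to $X_t^{-1}(\supp\theta_1)$. \cref{as} then lets me replace $(1-\Delta_g)^{\pm\alpha/2}$ by compactly supported, properly supported pseudo-differential operators, up to Hilbert--Schmidt errors. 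So I obtain that $\theta_1(P_t-\operatorname{id})\theta_1$ is Hilbert--Schmidt. Here $P_t=A\,S(t)A^{-2}S(t)^{-1}A$ with $A$ a local parametrix of order $\alpha$.

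By the Egorov theorem (valid with the limited $\mathcal C^\alpha$ regularity of $X_t$ at the level of principal symbols), $S(t)A^{-2}S(t)^{-1}$ is, locally, a pseudo-differential operator of order $-2\alpha$. Its principal symbol is $|{}^tDX_t^{-1}(x)\,\xi|_{g(X_t^{-1}x)}^{-2\alpha}$. So $P_t-\operatorname{id}$ is an operator of order $0$ with principal symbol
\[
\sigma_t(x,\xi)=\frac{|\xi|_{g(x)}^{2\alpha}}{|(DX_t^{-1})^{T}\xi|^{2\alpha}_{g(X_t^{-1}x)}}-1 .
\]
A compactly supported order-$0$ operator is Hilbert--Schmidt only if its principal symbol vanishes. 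This is because its Hilbert--Schmidt norm bounds $\int\!\!\int|\theta_1\sigma_t|^2\,dx\,d\xi$ over a conic region. Equivalently, one can test against wave packets $e^{i\lambda x\cdot\xi_0}\chi$ with $\lambda\to\infty$ and sum over a lattice of frequencies to produce a divergent sum. Therefore $\sigma_t\equiv0$ on $\supp\theta_1$, i.e.\ $X_t^{-1}$ preserves $g$ pointwise there. Since $\theta_1$ is arbitrary, each $X_t$ is an isometry.

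\emph{Step 3: conclusion.} If $X_{t}$ is an isometry for every $t$, then so is $X_{t,s}=X_t\circ X_s^{-1}$. Differentiating $X_{t,s}^*g=g$ in $t$ (for a.e.\ $t$, using $u\in L^1_t\mathcal C^\alpha$ and Lebesgue points) gives $\mathcal L_{u(t)}g=0$, i.e.\ $u(t)$ is Killing for almost every $t$.

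The main obstacle is Step 2. The difficulties are the rigorous localization of the Feldman--H\'ajek criterion on a noncompact $M$ using \cref{as}, and the symbolic calculus for conjugation by a merely $\mathcal C^\alpha$ diffeomorphism. For the latter I would only need the principal symbol modulo operators of negative order, which are Hilbert--Schmidt after localization when the order is $<-d/2$. For errors of order in $(-d/2,0)$ I would instead argue directly with wave packets: the leading contribution is $|\sigma_t(x_0,\xi_0)|^2$ times the number of packets, which diverges, and lower-order terms contribute $o(1)$ per packet.
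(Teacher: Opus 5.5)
Your plan has the same structure as the paper's proof. It runs through Feldman--H\'ajek, then localization with \cref{as}, then an Egorov-type computation of an order-$0$ principal symbol. It then uses the fact that a nonzero $0$-homogeneous symbol plus an $S^{-\delta}$ error is never Hilbert--Schmidt, and finally differentiates the isometry property.

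Two differences are harmless:
\begin{enumerate}
\item You conjugate at arbitrary $t$ using two charts. The paper instead uses $X_{t_0+s,t_0}$ with $|s|$ small, so that one chart suffices and the map is close to the identity, as \cref{lem:compo-symbole} requires. This is why the paper proves \cref{prop:criterion_for_abscont} for $S(t,t_0)$, whereas you recover $X_{t,s}$ from the group property.
\item You do not need wave packets for errors of order in $(-\frac d2,0)$. \cref{lemma:contradiction} compares, on dyadic shells $|\eta|\sim N$, the $N^d$ growth coming from $q_0$ with the $N^{d-2\delta}$ growth of the remainder, and this works for every $\delta>0$.
\end{enumerate}

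\textbf{The gap.} The first sentence of your Step 2 fails when $M$ is non-compact, which the theorem allows. In that case, in general (e.g.\ on $\Rr^d$), $(1-\Delta_g)^{-\alpha}$ is not trace class on $L^2(M)$. Moreover, $\mu_\alpha$ does not live on $L^2(M)$: on $\Rr^d$ the samples are stationary. So Feldman--H\'ajek gives no global Hilbert--Schmidt statement on $L^2(M)$, and ``I would localize this condition'' has nothing to start from.

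The missing idea is to apply \cref{th:FH} to the push-forwards by the restriction maps $\varphi_i:f\mapsto(\Phi_if)_{|K_i}$, on $E_i=L^2(K_i)$, where the covariances are trace class. This only gives $\Pi_iQ\Pi_i\in\mathfrak S_2$. Here $Q=P^{-\mez}(SPS^{-1}-P)P^{-\mez}$, and $\Pi_i$ is the orthogonal projection onto $\overline{\mathrm{range}(P^{\mez}\Phi_i)}$.

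Because $P^{-\mez}$ is non-local, $\theta\Pi_i\neq\theta$ in general, even for $\theta$ supported in $B_{i-1}$. This is where \cref{as} is essential. Take $\theta_0\in\mathcal C^\infty_c(\ring B_i)$ equal to $1$ near $B_{i-1}$. Then
\[
\theta(1-\Pi_i)=\theta P^{-\mez}(1-\theta_0)P^{\mez}(1-\Pi_i)\in\mathfrak S_2 .
\]
Combine this with the $L^2$-boundedness of $Q$. That boundedness requires $S$ to be bounded on $H^\alpha$, hence $u\in\mathcal C^\alpha$. Together they give $\theta Q\theta\in\mathfrak S_2$. In your sketch, \cref{as} only serves to replace $(1-\Delta_g)^{\alpha/2}$ by a local operator, which is the paper's later and easier use of it.

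\textbf{A minor slip.} Conjugation by $S(t)$ evaluates the symbol at $\bigl(X_t^{-1}(x),\Jac X_t(X_t^{-1}(x))^\top\xi\bigr)$. This is the inverse transpose of what you wrote. With your formula, $\sigma_t\equiv0$ would read $L^\top g(y)L=g(x)$ for $L=\Jac X_t(y)$ and $y=X_t^{-1}(x)$. That is not the isometry condition when $g$ varies.
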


\begin{rema}
A Killing vector field is a vector field~\(v\) whose transport flow is a global isometry. It is characterised by the condition
\[
    \mathcal{L}_{v}g=0
\]
where~\(\mathcal L_{v}\) is the Lie derivative with respect to~\(v\). In coordinates, a Killing vector field is a vector field such that for all~\(\mu,\nu\), 
\[
    \nabla_{\mu} v_\nu + \nabla_\nu v_\mu = 0
\]
where we recall that the connection~\(\nabla\) writes in coordinates
\[
    \nabla_\mu v_\nu = \partial_\mu v_\nu  - \Gamma^\rho_{\;\; \mu \nu } v_\rho 
\]
with~\(\Gamma^\rho_{\;\; \mu \nu}\) the Christoffel symbols.

We refer to \cite[Proposition 8.1.1]{petersen}. Although this result is very standard, it is commonly written for a vector field that does not depend on time. We include a proof in coordinates in~\cref{app:Killing}.

Remark that Killing vector fields are divergence-free.
\end{rema}

\begin{rema} The Lie algebra of Killing vector fields of a smooth boundaryless connected Riemannian manifold of dimension $d$ has dimension at most $\frac{d(d+1)}{2}$, see \cite[Chapter II, Theorem 3.1]{Kobayashi1972}. 
\end{rema}

Before we go on with the special cases of the torus and the Euclidean space, let us be more specific about the initial datum for~\eqref{eq:transport}. First of all, the Cameron--Martin space of $\mu_{\alpha}$ is~\(H^\alpha (M)\).

If~\(M\) is compact, measure~\(\mu_\alpha\) may be realized by taking the pushforward measure of
\[
    \Phi_g^\alpha = \sum_{n\in \Nn} \lambda_n^{-\alpha} g_n e_n
\]
where~\((\lambda_n)_n\) are the ordered eigenvalues of~\(\sqrt{1-\Delta_g}\) counted with multiplicity, where~\((e_n)_n\) are the associated eigenfunctions, and where~\((g_n)_n\) is a family of independent real centered Gaussian variables. Since
\[
    \# \{ n \in \Nn \;| \; \lambda_n \in [2^k, 2^{k+1}) \} \sim 2^{dk}
\]
as~\(k\) goes to~\(\infty\) (see~\cite[Theorem 1.1]{hormander1968spectral}), we get that~\(\Phi_g^\alpha\) is almost surely in~\( H^{\alpha - \frac{d}{2} -} (M)\) but almost surely not in~\(H^{\alpha - \frac{d}{2}}\). Hence, it is in~\(L^2(M)\).

For the torus in particular,~\(\Phi_g^\alpha\) may be written
\[
    x\longmapsto \frac{1}{(2\pi)^{d/2}} \sum_{n\in \Zz^d} (1 + |n|^2)^{-\frac{\alpha}{2}} \tilde g_n e^{in\cdot x}
\]
where~\((\tilde g_n)_{n\in \Zz^d}\) is a family of complex centered jointly Gaussian variables whose law is characterized by the covariances for all~\(n,m \in \Zz^d\),
\[
    \mathbb E(\tilde g_m \tilde g_n) = \delta_{n+m},
\]
which implies that~\(\tilde g_{-n} = \overline{\tilde g_n}\). 

In the case of the Euclidean space, we may realise~\(\mu_\alpha\) as the real part of a Wiener integral
\[
    \Phi_g^\alpha (x) = \mathrm{Re}\int (1+|\xi|^2)^{-\frac{\alpha}{2}} e^{i x\cdot \xi} \mathrm{d}W(\xi).
\]
We recall that computationally speaking~\(\mathrm{d}W(\xi)\) is an infinitesimal centered Gaussian variable and we have
\[
    \E\left(\overline{\mathrm{d}W(\eta)} \mathrm{d}W(\xi)\right) = \delta(\xi - \eta) \mathrm{d}\xi \mathrm{d}\eta. 
\]
The Kolmogorov continuity theorem, see~\cite[Theorem 4.19]{hairer2009introduction} ensures that the sample fields are~\(\mu_\alpha\) almost surely locally~\(\mathcal C^{\alpha - \frac{d}{2}-}\). In particular, this ensures that the sample fields are almost surely in~\(L^2_{\mathrm{loc}}\) such that with proper cutoffs,~\(\mu_\alpha\) is a Gaussian measure on a Hilbert space. 

\begin{rema}
The zero-Fourier mode is \emph{a priori} conserved by the flow of~\eqref{eq:transport}. When~\(M\) is compact, instead of basing~\(\mu_\alpha\) on~\(L^2(M)\), we may base it on
\[
    L^2(M)_* = \left\{ \sum_{n\neq 0} a_n e_n \; : \; \sum_{n\neq 0} |a_n|^2 < \infty \right\} . 
\]
In this case, random field~\(\Phi_g^\alpha\) becomes
\[
    \sum_{n\neq 0} \lambda_n^{-\alpha} g_n e_n
\]
and solution~\(\Omega(t)\) is a well-defined field with vanishing constant mode.
\end{rema}

\begin{coro}[Case of $\Rr^d$]\label{thm.euclidean}
Assume that~\((M,g)\) is the Euclidean space $\Rr^d$ equipped with the flat metric. Then the following statements are equivalent: 
\begin{enumerate}[(i)]
    \item Gaussian measure $\mu_\alpha$ is quasi-invariant under the flow of \eqref{eq:transport}; 
    \item Gaussian measure $\mu_{\alpha}$ is invariant under the flow of \eqref{eq:transport}, that is, for all $t \in \mathbb{R}$ there holds $\mu_\alpha^t = \mu_{\alpha}$;
    \item vector field $u$ is of the form $u(t,x)=a(t) x + b(t)$ for some $a \in L^1_{\rm loc}(\mathbb{R},A_d(\mathbb{R}))$, where~\(A_d(\Rr)\) is the set of skew-symmetric matrices, and $b \in L^1_{\rm loc}(\mathbb{R},\mathbb{R}^d)$.
\end{enumerate}
\end{coro}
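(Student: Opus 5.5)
The plan is to deduce the corollary from \cref{thm.riemannian}. The only new work is to check that the theorem applies on flat $\Rr^d$, and then to identify the Killing vector fields of the Euclidean metric explicitly.

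\emph{Applicability.} First, \cref{as} holds for the Euclidean space by the remark following it, which refers to \cref{lemma-pseudodiff-power}~(\textit{iv}). Second, $\Rr^d$ with the flat metric is smooth, connected, complete and boundaryless. Hence \cref{thm.riemannian} gives (i)$\iff$(ii)$\iff$(iii'), where (iii') says that $u(t,\cdot)$ is Killing for almost every $t$. For (ii) in the stronger form stated here, invariance at every time $t$, I note that $S(t)$ depends only on the integral of $u$ in time. So modifying $u$ on a null set of times does not change the flow, and the statement of \cref{thm.riemannian}(ii) is exactly $\mu_\alpha^t=\mu_\alpha$ for all $t$.

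\emph{Characterising Killing fields on $\Rr^d$.} The Christoffel symbols vanish, so $v$ is Killing if and only if $\partial_\mu v_\nu+\partial_\nu v_\mu=0$ for all $\mu,\nu$. Vector fields of the form $v(x)=ax+b$ with $a$ skew-symmetric clearly satisfy this, since $\partial_\mu v_\nu=a_{\nu\mu}$. For the converse, assume first that $v\in\mathcal C^2$. Set $h_{\mu\nu\rho}=\partial_\rho\partial_\mu v_\nu$, which is symmetric in $(\rho,\mu)$ and antisymmetric in $(\mu,\nu)$. Applying these two symmetries alternately, a cyclic permutation gives $h_{\mu\nu\rho}=-h_{\mu\nu\rho}$, so all second derivatives of $v$ vanish. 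Then $v$ is affine, $v(x)=ax+b$, and the Killing equation forces $a+a^T=0$. Since here $\alpha>d/2\geq1$, the field $u(t,\cdot)$ is at least $\mathcal C^{1,\varepsilon}$. If it is not $\mathcal C^2$, I would run the same argument in the sense of distributions: the vanishing of $\partial_\rho\partial_\mu v_\nu$ holds distributionally, which still forces $v$ to be affine.

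\emph{Measurability and integrability.} I must also check that $a\in L^1_{\rm loc}(\Rr,A_d(\Rr))$ and $b\in L^1_{\rm loc}(\Rr,\Rr^d)$. The coefficients are recovered as $b(t)=u(t,0)$ and $a(t)=\nabla u(t,0)$, which are measurable in $t$. Their norms are bounded by $\|u(t)\|_{\mathcal C^1}$, and that quantity is locally integrable by hypothesis. One subtlety is that $u$ is $\mathcal C^\alpha$ in a global sense, while a field $ax+b$ with $a\neq0$ is unbounded, so a global $\mathcal C^\alpha$ norm would force $a=0$. I would read the hypothesis on $\Rr^d$ as a local-uniform Hölder condition, which is consistent with $\mathcal C^{1,\varepsilon}_{\rm loc}$ and with the existence of the flow, so that rotations are allowed. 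Conversely, given (iii), $u$ is Killing for almost every $t$ and the theorem applies. The main obstacle is this bookkeeping about which function class $u$ lives in; the geometric step is the elementary rigidity computation above.
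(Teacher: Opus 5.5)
Your proposal is correct, but it reaches the conclusion by a different route from the paper's.

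\emph{Direction (iii)$\Rightarrow$(ii).} The paper does not use \cref{thm.riemannian} here. It writes the flow as $X_t^{-1}(x)=R(t)x+b(t)$ via \cref{rema-isom}. It then checks invariance directly on the realisation $\mathrm{Re}\int(1+|\xi|^2)^{-\alpha/2}e^{ix\cdot\xi}\,\mathrm{d}W(\xi)$, using that $e^{ib\cdot\xi}\mathrm{d}W(\xi)$ and the rotated noise have the law of $\mathrm{d}W$, and that the symbol is radial. This needs only the explicit form of the flow. It therefore sidesteps the tension you correctly point out: a field $a(t)x$ with $a\neq 0$ is unbounded, so it cannot lie in a global $\mathcal C^\alpha$ space. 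Your route goes through the theorem, inherits that tension, and so must rely on your local-uniform reading of the hypothesis.

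\emph{Direction (i)$\Rightarrow$(iii).} Both arguments start from \cref{thm.riemannian}, but the paper then works with the flow. Since $X_t$ is a global isometry, $|X_t(x)-X_t(y)|=|x-y|$. Differentiating in $x$ and $y$ gives $\Jac X_t(x_0)^\top\Jac X_t(y_0)=I_d$, so $\Jac X_t$ is a constant orthogonal matrix and $X_t$ is a rigid motion. \cref{rema-isom} then translates this back to $u=a(t)x+b(t)$ with $a$ skew-symmetric.

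You instead solve the flat Killing equation directly at each fixed time. You use the classical fact that $\partial_\rho\partial_\mu v_\nu$ vanishes, being symmetric in $(\rho,\mu)$ and antisymmetric in $(\mu,\nu)$; a distributional version covers the $\mathcal C^{1,\varepsilon}$ case.

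\emph{What each route buys.} Yours is more elementary and purely infinitesimal: it needs no passage through distance preservation of the flow and no time differentiation of $\Jac X_t$. Your recovery $a(t)=\nabla u(t,0)$, $b(t)=u(t,0)$ also makes the measurability and $L^1_{\rm loc}$ integrability of the coefficients explicit, which the paper leaves implicit. The paper's route, in exchange, produces the explicit form of the flow that its direct invariance computation uses.
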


\begin{rema}\label{rema-isom}
The third item is equivalent to the fact that the characteristics~\(X_t\) and thus~\(X_t^{-1}\) are of the form~\(X_t^{-1}(x) = b(t) + R(t) x\) with~\(b(t) \in \Rr^d\) and~\(R(t) \in O_d(\Rr)\). Indeed, we have that 
\[
    \partial_t \Jac X_t  = \Jac u (t,X_t)\Jac X_t.
\]
We deduce that if~\(\Jac X_t \in O_d(\Rr)\) then~\(\Jac u(t,\cdot)\) is in the Lie algebra of~\(O_d(\Rr)\), which is the set of skew-symmetric matrices. Therefore,~\(\Jac u(t,x) = a(t)\) where~\(a\in A_d(\Rr)\) and~\(u(t,x) = a(t) x + c(t)\). Conversely, if 
\[
    u(t,x) = a(t) x + c(t)
\]
then~\(\partial_t \Jac X_t = a(t) \Jac X_t\) and thus
\[
    \partial_t (\Jac X_t^* \Jac X_t) = \Jac X_t^*(a(t)^* + a(t))\Jac X_t = 0.
\]
Since~\(\Jac X_{t=0} = \mathrm{Id}\), there exists~\(R(t) \in O_d(\Rr)\) such that~\(\Jac X_t = R(t)^{-1}\) and thus
\[
    X_t^{-1} = R(t) x + b(t).
\]
\end{rema}

\begin{coro}[Case of the torus]\label{thm.main}
Assume that~\((M,g)\) is the torus with the flat metric. Then the following statements are equivalent:
\begin{enumerate}[(i)]
    \item Gaussian measure $\mu_\alpha$ is quasi-invariant under the flow of \eqref{eq:transport};
    \item Gaussian measure $\mu_\alpha$ is invariant under the flow of \eqref{eq:transport};
    \item vector field $u$ is spatially constant, i.e. $u(t,x)=c(t)$, for a almost every $t$, and where $c$ is a given $L^1_{\rm loc}$ function.
\end{enumerate}
\end{coro}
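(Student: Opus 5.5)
This follows from \cref{thm.riemannian} applied to the flat torus $\T^d=\Rr^d/(2\pi\Zz)^d$. Since $\T^d$ is compact, the Remark after \cref{as} shows that \cref{as} holds. The equivalence (i)$\Leftrightarrow$(ii) is then given directly by the theorem. It remains to show that (iii) of \cref{thm.riemannian} is the same as (iii) here, namely that the Killing vector fields of the flat torus are exactly the constant vector fields.

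Constant fields are Killing. Their flows are translations $x\mapsto x+c$, which are isometries of $\T^d$. Equivalently, the Christoffel symbols vanish in the standard flat coordinates, so $\nabla_\mu v_\nu+\nabla_\nu v_\mu=\partial_\mu v_\nu+\partial_\nu v_\mu=0$ when $v$ is constant. If $u(t,x)=c(t)$ with $c\in L^1_{\rm loc}$, then $u(t,\cdot)$ is Killing for almost every $t$.

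Conversely, let $v$ be a $\mathcal C^1$ vector field on $\T^d$, viewed as a $(2\pi\Zz)^d$-periodic field on $\Rr^d$, with $\partial_\mu v_\nu+\partial_\nu v_\mu=0$. I would first argue in the smooth case and then pass to low regularity by mollification. For smooth $v$, the standard identity
\[
\partial_\lambda\partial_\mu v_\nu=\tfrac12\bigl[\partial_\lambda(\partial_\mu v_\nu+\partial_\nu v_\mu)+\partial_\mu(\partial_\lambda v_\nu+\partial_\nu v_\lambda)-\partial_\nu(\partial_\lambda v_\mu+\partial_\mu v_\lambda)\bigr]=0
\]
shows that all second derivatives vanish, so $v(x)=ax+b$ with $a$ skew-symmetric, consistent with \cref{thm.euclidean}. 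Periodicity, $v(x+2\pi k)=v(x)$ for $k\in\Zz^d$, gives $a k=0$ for every $k$, hence $a=0$ and $v$ is constant.

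To handle $v\in\mathcal C^1$, convolve with a smooth periodic mollifier on $\T^d$. This commutes with derivatives and preserves both the Killing equation and periodicity. Each mollified field is therefore constant, and so is their limit $v$. Applying this to $u(t,\cdot)$ for almost every $t$ gives $u(t,x)=c(t)$. Then $c(t)$ equals the spatial average of $u(t,\cdot)$, which is $L^1_{\rm loc}$ in $t$ because $u\in L^1_{\rm loc}(\Rr,\mathcal C^\alpha)$.

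The whole argument is routine once \cref{thm.riemannian} is available. The only point needing care is this regularity issue for the rigidity step, which the mollification settles.
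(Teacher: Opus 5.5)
Your proof is correct, but its rigidity step takes a different route from the paper's.

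You classify Killing fields of the flat torus at the level of the vector field. The Killing equation forces all second derivatives to vanish, so $v(x)=ax+b$; periodicity gives $a=0$; and mollification covers fields that are only $\mathcal C^\alpha$.

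The paper instead works with the flow. Since $u$ is Killing, $X_t$ is a global isometry, so, as in \cref{thm.euclidean}, its lift is $x\mapsto R(t)x+b(t)$. Preserving the lattice confines $R(t)$ to the finite group of signed permutation matrices. Continuity in $t$ with $R(0)=I_d$ then forces $R(t)=I_d$, so $u(t,\cdot)=\dot b(t)$. The paper also proves (iii)$\Rightarrow$(ii) directly on the random Fourier series, using that $(\tilde g_n e^{-in\cdot b(t)})_n$ has the same law as $(\tilde g_n)_n$, rather than citing \cref{thm.riemannian} as you do.

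What your route buys: it handles $u(t,\cdot)$ at each fixed $t$, so it avoids lifting the flow to $\Rr^d$ and identifying $c=\dot b$. It also gives $c\in L^1_{\rm loc}$ at once, as the spatial mean. The paper's route is more geometric, recycles the Euclidean corollary, and checks invariance concretely on the random series.

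You could even drop the mollification. In Fourier variables the Killing equation gives $n\cdot\hat v(n)=0$ and then $|n|^2\hat v(n)=0$, so $\hat v(n)=0$ for $n\neq 0$.
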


\begin{rema} In these corollaries, we have made explicit the Killing vector fields on the torus and on the Euclidean space; explicit computations reveal that the Killing vector fields of the sphere $\mathbb{S}^{d-1}$ endowed with the standard metric are the infinitesimal generators of rotations, i.e. $u(x)=Ax$ for $A \in \mathcal{M}_d(\Rr)$, $A+A^T=0$ and $x \in \mathbb{S}^{d-1}$.
\end{rema}

\subsection{Sketch of the proof and further remarks}

Our proof relies heavily on the Feldman--H\'ajek theorem, which we recall as~\cref{th:FH}. This theorem characterizes the mutual absolute continuity between two Gaussian measures. In our case, this reduces to studying operator 
\[
\mathcal Q := (1-\Delta_g)^{\frac{\alpha}{2}} S(t) (1-\Delta_g)^{-\alpha} S(t)^* (1-\Delta_g)^{\frac{\alpha}{2}} - 1
\]
where the adjoint is taken in~\(L^2(M)\). Once we have applied the Feldman--H\'ajek theorem, we get that~\(\mu_\alpha^t\) and~\(\mu_\alpha\) are mutually absolutely continuous if and only if localisations of~\(\mathcal Q\) are Hilbert--Schmidt operators on~\(L^2(M)\), see~\cref{prop:criterion_for_abscont}. 

As in the PhD thesis of Knezevitch~\cite{knezevitch}, we then use pseudo-differential calculus. The key point is to prove that~\(\mathcal Q\) is an operator of order~\(0\) and to compute its principal symbol~\(\mathfrak q\). Using pseudo-differential calculus, we get that~\(\mathfrak q(x,\xi)\) depends only on~\(x\) and~\(\frac{\xi}{|\xi|}\) and that
\[
    \mathcal Q - \mathrm{Op}(\mathfrak q)
\]
is an operator of negative order. For this, we require that the regularity of~\(u\) is strictly bigger than~\(1\). More information about pseudo-differential calculus may be found in~\cref{sec.pseudo}. At this stage, we get that if~\(\mathcal Q\) is Hilbert--Schmidt, then~\(\mathfrak q\) vanishes. With the explicit computation of~\(\mathfrak q\), see~\eqref{eq.principal-symbol}, we get that~\(u\) is a Killing vector field.

Conversely, if~\(u\) is a Killing field, then~\(X_t\) is an isometry and thus~\(S(t)\) commutes with the Laplace--Beltrami operator, see \cite{Canzani2013}. We get that~\(\mathcal Q = 0\) and thus~\(\mu_\alpha^t = \mu_\alpha\).

\begin{rema}
When~\(\alpha=0\), the covariance operator \((1-\Delta_g)^{-\alpha}\) is the identity. Since $u$ is divergence-free, the transport flow preserves volume, and therefore $S(t)$ is unitary on $L^2(M)$. It follows that the Gaussian measure $\mu_0$ with identity covariance, namely the white-noise measure, is invariant under the flow of~\eqref{eq:transport}.
In the Euclidean case, this is the usual white noise on~$\Rr^d$; on~$\T^d$ it is the periodic white noise. 
This invariance is natural as white noise is invariant under volume-preserving transformations.
\end{rema}

\subsection{Questioning the methodology} 

Remark that the whole proof is at the level of the symbols of covariance operators and not at the level of random variables. This allows one to deal with generic Riemannian manifolds. Nevertheless, notice that even in the case of the torus, for which the eigenvalues and eigenvectors of the Laplace--Beltrami operator are easy to manipulate, explicit computations at the level of random variables are not trivial. We explain why.

Assume that initial data are taken as
\[
    \Omega_0^{\omega}(x)= \Phi_g^\alpha = \sum_{n\in\mathbb{Z}^d} \tilde g_n^{\omega}a_ne^{in\cdot x}, 
\]
with~\(a_n = (1+|n|^2)^{-\frac{\alpha}{2}}\).

Observe that $\mu^t_\alpha$ is the Gaussian measure characterized by the explicit formula:
\[
    \Omega^\omega(t)=\sum_{n\in\mathbb{Z}^d} g_n^{\omega}a_ne^{in\cdot X_t^{-1}(x)}.
\]
We can write 
\[
    \mathbb{E}\left[\overline{\hat{\Omega}(t,m)}\hat{\Omega}(t,m')\right] = \sum_{n\in \mathbb{Z}^d} |a_n|^2 \bar{I}_{n,m}I_{n,m'} \quad \mathrm{with}\quad I_{n,m}=\frac{1}{(2\pi)^d}\int_{\mathbb{T}^d}e^{in\cdot X_t^{-1}(x)-im\cdot x}\mathrm{d}x.
\]
Note that $I_{n,m}(t=0)=\delta_n^m$.

After applying the Feldman--H\'ajek theorem, the question of quasi-invariance amounts to the convergence of the sum
\begin{equation}
    \label{eq.hilb-schmidt-criteron}
    \sum_{m,m' \in \mathbb{Z}^d} \left(\frac{1}{|a_m||a_{m'}|}\mathbb{E}\left[\overline{\hat{\Omega}(t,m)}\hat{\Omega}(t,m')\right]-\delta_m^{m'}\right)^2.
\end{equation}
Consider the subsum on the diagonal~\(m=m'\)
\[
    S :=\sum_{m \in \mathbb{Z}_0^d} \left(\frac{1}{|a_m|^2}\mathbb{E}[|\hat{\Omega}(t,m)|^2]-1\right)^2.
\]
By substituting the formula for~\(\mathbb{E}[|\hat{\Omega}(t,m)|^2]\), we get
\[
    S = \sum_{m,n \in \mathbb{Z}_0^d} \left(\frac{1}{|a_m|^2}|a_n|^2 |I_{n,m}|^2-\delta_n^m\right)^2. 
\]
Not knowing the explicit form of~\(X_t\), in order to estimate~\(I_{n,m}\) we might resort to integration by parts. Integrating~\(e^{-im \cdot x}\) may gain powers of~\(m\) up to getting a converging sum in~\(m\) (to compensate~\(\frac{1}{|a_m|^2}\)) but it loses powers of~\(n\). This strategy thus fails. At this point, the standard change of variable to study
\[
    |I_{n,m}|^2 = \frac{1}{(2\pi)^{2d}} \int_{\T^d \times \T^d} e^{in\cdot (X_t x - X_t y) - im\cdot (x-y)}\mathrm{d}x\mathrm{d}y
\]
is to write
\[
    X_t (x) - X_t (y) = \int_0^1  D X_t(y + s(x-y)) \mathrm{d}s (x- y)
\]
but this is equivalent to applying pseudo-differential calculus, as may be seen in~\cref{lem:compo-symbole}.

\subsection{Notation}
In the rest of this paper, we write~\(P = (1-\Delta_g)^{-\alpha}\). 

Let~\(v : M \rightarrow M\) be a~\(\mathcal C^1\) function. We denote by $\Jac v$ the Jacobian matrix of $v$, that is the map such that for all~\(x\in M\), we have that~\(\Jac v(x)\) is a linear map from~\(T_xM\) to itself that matches in coordinates the $d\times d$ matrix with coefficients $[\Jac v(x)]_{ij}=\partial_j v_i(x)$.

Let $\mathfrak{S}_2(H)$ denote the space of Hilbert--Schmidt operators on a Hilbert space $H$ and $\mathfrak{S}_1(H)$ stands for the space of trace class operators on $H$.

The space~\(L^2(M)\) is the set of measurable functions on~\(M\) that are square-integrable endowed with the norm
\[
    \|f\|_{L^2(M)} = \left(\int_M |f|^2 \dVolg \right)^{\mez}.
\]
where $\dVolg(x) = \sqrt{\det g(x)}\,\mathrm{d}x^1 \cdots \mathrm{d}x^d$.
The space~\(H^s(M)\) is the set~\((1-\Delta_g)^{-s/2}L^2(M)\) endowed with the norm
\[
\|f\|_{H^s(M)} = \left( \int |(1-\Delta_g)^{\frac{s}{2}}f|^2  \dVolg \right)^{\mez} .
\]

If~\(A\) is a~\(d\times d\) matrix, then~\(A^\top\) is its transpose. Set $O_d(\mathbb{R})$ denotes the group of orthogonal matrices of size $d$, and $A_d(\mathbb{R})$ the space of skew-symmetric matrices of size $d$.

If~\(S\) is a linear map from a Hilbert space to a Hilbert space, we write~\(S^*\) its adjoint. If there is an ambiguity in the text,~\(S^*\) is the adjoint in~\(L^2(M)\).

For all functions~\(\theta : M \rightarrow \Rr\) or~\(\rho : \Rr^d \rightarrow \Rr\) we write~\(\supp \theta\) (resp.~\(\supp \rho\)) the support of~\(\theta\) (resp.~\(\rho\)).

Set~\(\mathcal C^\infty_c(M)\) denotes smooth functions with compact supports.

\subsection*{Organization of the paper}

The rest of the paper is organised as follows. In \cref{sec.FH} we reformulate quasi-invariance as the belonging to the Hilbert--Schmidt of a certain operator. In \cref{sec.proof} we are able to decompose this operator into several simple operators for which we can compute their Hilbert--Schmidt norms. This relies on pseudo-differential tools developed in \cref{sec.pseudo}.

\section{Absolute continuity between Gaussian measures}
\label{sec.FH}

The goal of this section is to prove the following characterization of quasi-invariance of $\mu_{\alpha}$ under the flow of \eqref{eq:transport}.

\begin{prop}\label{prop:criterion_for_abscont} Assume that~\(\mu_\alpha\) is quasi-invariant under the flow of~\eqref{eq:transport}. Then, for all~\(t,t_0 \in \Rr\) and all~\(\theta \in \mathcal C_c^\infty(M)\), we have that 
\[
   \theta P^{-\mez} [S(t,t_0), P] S(t,t_0)^{-1} P^{-\mez} \theta
\]
is a Hilbert--Schmidt operator on~\(L^2(M)\). 
\end{prop}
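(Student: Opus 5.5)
The plan is to apply the Feldman--H\'ajek theorem (\cref{th:FH}) to two Gaussian measures defined on a compact region, and then strip off the cutoffs using \cref{as}.

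\emph{Step 1: reduce to one flow map.} Fix $t,t_0$. Quasi-invariance gives $\mu_\alpha^t\sim\mu_\alpha$ and $\mu_\alpha^{t_0}\sim\mu_\alpha$. Since $S(t)=S(t,t_0)S(t_0)$, we have $\mu_\alpha^t=S(t,t_0)_*\mu_\alpha^{t_0}$. Pushforward by the measurable bijection $S(t,t_0)$ preserves equivalence of measures. Hence $S(t,t_0)_*\mu_\alpha\sim S(t,t_0)_*\mu_\alpha^{t_0}=\mu_\alpha^t\sim\mu_\alpha$. Write $S=S(t,t_0)$. Because $u$ is divergence free, $X_{t,t_0}$ preserves $\dVolg$, so $S$ is unitary on $L^2(M)$ and $S^*=S^{-1}$. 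The covariance of $S_*\mu_\alpha$ is $SPS^*=SPS^{-1}$, and the algebraic identity
\[
P^{-\mez}SPS^{-1}P^{-\mez}-1=P^{-\mez}(SP-PS)S^{-1}P^{-\mez}=P^{-\mez}[S,P]S^{-1}P^{-\mez}
\]
identifies the operator in the statement with the Feldman--H\'ajek operator. On a compact $M$ the proof would end here, since \cref{th:FH} applies directly on $L^2(M)$.

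\emph{Step 2: localize.} For non-compact $M$ the measures live only on $L^2_{\rm loc}$. Fix $\theta$ and choose $\theta_2\in\mathcal C_c^\infty(M)$ with $\supp\theta$ contained in the interior of $\theta_2^{-1}(\{1\})$. Consider the pushforwards of $\mu_\alpha$ and $S_*\mu_\alpha$ under the continuous linear map $\phi\mapsto\theta_2\phi\in L^2(M)$. These are equivalent, since pushforwards of equivalent measures are equivalent, and they are Gaussian on a genuine Hilbert space with covariances $\theta_2P\theta_2$ and $\theta_2SPS^{-1}\theta_2$. Applying \cref{th:FH} to this pair, I expect to obtain that
\[
\theta P^{-\mez}\bigl(SPS^{-1}-P\bigr)P^{-\mez}\theta
\]
is Hilbert--Schmidt, modulo commutator errors coming from the cutoffs. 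Concretely, I would write the Cameron--Martin operator of the localized measure as $\theta_2P^{\mez}$, restricted to its range. The equivalence then yields Hilbert--Schmidtness of $(\theta_2P^{\mez})^{-1}\theta_2SPS^{-1}\theta_2(\theta_2P^{\mez})^{-1,*}-1$ on the closure of the range. Testing against functions supported where $\theta_2=1$ and compressing by $\theta$ should recover the desired operator.

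\emph{Step 3: control the cutoff errors.} Here \cref{as} does the work. It says $\theta P^{-\mez}(1-\theta_2)$ is Hilbert--Schmidt, and by taking adjoints so is $(1-\theta_2)P^{-\mez}\theta$. So I can replace $\theta P^{-\mez}$ by $\theta P^{-\mez}\theta_2$ up to a Hilbert--Schmidt error. The price is that this error must be multiplied by bounded operators. That requires $P^{\mez}SPS^{-1}P^{-\mez}$ to be bounded on $L^2$, which follows from $S$ being bounded on $H^\alpha$ for $u\in\mathcal C^\alpha$. I will also need the same statement with $\theta_2$ replaced by the transported cutoff $\theta_2\circ X_{t,t_0}^{-1}$, so I would choose $\theta_2$ equal to $1$ on a neighbourhood of both $\supp\theta$ and its image under the compact-support-preserving diffeomorphism.

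I expect the main obstacle to be Step 2: setting up the localized Feldman--H\'ajek statement rigorously when the localized Cameron--Martin space $\theta_2H^\alpha$ is not given by an invertible operator on all of $L^2$. A possible fallback is to compare restrictions to a relatively compact open set $U$ instead. On $U$ the covariances are compressions of $P$ and $SPS^{-1}$, and the Cameron--Martin space is $H^\alpha(M)$ restricted to $U$. One could then apply \cref{th:FH} there and use \cref{as} again to pass from the restriction operator to multiplication by $\theta$.
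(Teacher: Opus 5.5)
There is a gap, and it sits exactly at the step you flag as the main obstacle. Your skeleton is the paper's: reduce to $S(t,t_0)_*\mu_\alpha\sim\mu_\alpha$ (your Step 1 is correct), push both measures forward by a cutoff onto a genuine Hilbert space, apply \cref{th:FH}, and strip the cutoffs using \cref{as} and the boundedness of $Q:=P^{-\mez}(SPS^{-1}-P)P^{-\mez}$.

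The problem is the sentence ``testing against functions supported where $\theta_2=1$ and compressing by $\theta$ should recover the desired operator''; it does not do the job. Set $B=\theta_2P^{\mez}$. Your two localized covariances are then $BB^*$ and $B(1+Q)B^*$. What \cref{th:FH} yields is exactly $\Pi Q\Pi\in\mathfrak S_2(L^2(M))$, where $\Pi$ is the orthogonal projection onto $\overline{\mathrm{range}(P^{\mez}\theta_2)}=(\ker B)^\perp$. Since $P^{\mez}$ is nonlocal, $\Pi\neq 1$. Even for $f\in H^\alpha$ supported where $\theta_2=1$, the pseudo-inverse gives $B^{-1}f=\Pi P^{-\mez}f$, not $P^{-\mez}f$. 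So testing only ever produces $\Pi$-compressed quantities, and your Step 3 (replacing $\theta P^{-\mez}$ by $\theta P^{-\mez}\theta_2$) is never linked to them. You need an argument that the defect $1-\Pi$ becomes negligible after compression by $\theta$. Your fallback of restricting to a relatively compact $U$ does not avoid this: the restricted Cameron--Martin norm is a minimal-extension norm, and the same projection reappears.

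The paper supplies the missing idea in a few lines. Pick $\theta_0$ with $\theta_0=1$ near $\supp\theta$ and $\theta_2=1$ on $\supp\theta_0$. If $v\in\ker B$, then $\theta_0P^{\mez}v=\theta_0\theta_2P^{\mez}v=0$. Hence $P^{\mez}v=(1-\theta_0)P^{\mez}v$ and $\theta v=\theta P^{-\mez}(1-\theta_0)P^{\mez}v$. Thus $\theta(1-\Pi)=\theta P^{-\mez}(1-\theta_0)P^{\mez}(1-\Pi)$ is Hilbert--Schmidt by \cref{as}. Splitting $\theta Q\theta$ along $\Pi$ and $1-\Pi$ into four pieces, with $Q$ bounded, then concludes.

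Alternatively, you can close your own Step 3 directly. Since $\theta_2(SPS^{-1}-P)\theta_2=B\,\Pi Q\Pi\,B^*$, one has $\theta P^{-\mez}\theta_2(SPS^{-1}-P)\theta_2P^{-\mez}\theta=T\,\Pi Q\Pi\,T^*$ with $T=\theta P^{-\mez}\theta_2P^{\mez}=\theta-\theta P^{-\mez}(1-\theta_2)P^{\mez}$. This $T$ is bounded by \cref{as}.

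Two minor points. First, the boundedness actually needed for your error terms is that of $Q$, i.e.\ of $P^{-\mez}SPS^{-1}P^{-\mez}$, not of $P^{\mez}SPS^{-1}P^{-\mez}$. Second, no transported cutoff $\theta_2\circ X_{t,t_0}^{-1}$ is required here, since $Q$ is bounded on all of $L^2(M)$.
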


Before we go on, let us be more precise about the assumption on~\(\mu_\alpha\).

Let~\(x_0\in M\). For all~\(i\in \Nn^*\), let~\(B_i\) be the geodesic closed ball of center~\(x_0\) and radius~\(i\). Since~\(M\) is complete,~\(B_i\) is compact. For all~\(i\), let~\(\Phi_i\) be a smooth function compactly supported in~\(B_{i+1}\), such that~\(\Phi_i\) is equal to~\(1\) on~\(B_i\). Write~\(K_i\) for the support of~\(\Phi_i\). We see~\(L^2_{\mathrm{loc}}(M)\) as the projective limit of~\((E_i = L^2(K_i))_i\) with maps defined for~\(i<j\) as
\[
    \varphi_i^j : \begin{array}{rcl}
    E_j &\longrightarrow &E_i \\
    f &\longmapsto &(\Phi_i f)_{|K_i}.
    \end{array}
\]
We have indeed that~\(\varphi_i^j\) is continuous and that for~\(i<j<k\),
\[
    \varphi_i^j\circ \varphi_j^k = \varphi_i^k
\]
since by hypothesis~\(\Phi_i \Phi_j = \Phi_i\). We also set
\[
    \varphi_i : \begin{array}{rcl}
    L^2_{\mathrm{loc}} &\longrightarrow &E_i \\
    f &\longmapsto &(\Phi_i f)_{|K_i}.
\end{array}
\]
We also have that~\(\varphi_i\) is continuous, and the relations \(\varphi_i^j\circ \varphi_j = \varphi_i\).

The adjoint of $\varphi_i$ satisfies that for all $h \in E_i$, $\varphi_i^*h$ is the function in $L^2_{\rm loc}(M)$ defined as the extension by $0$ of $\Phi_ih$ outside $K_i$.

\begin{assumption}\label{As.FH}
We assume that for all~\(i\in \Nn^*\), measure~\(\mu_\alpha\) is such that~\((\varphi_i)_* \mu_\alpha\) is the centered Gaussian measure on~\(E_i\) with covariance operator~\(\varphi_i P \varphi_i^*\). 
\end{assumption}

\begin{rema}
We remark that this is coherent. First, by application of~\cref{lemma-pseudodiff-power}~(\textit{ii}), we have that~\(\Phi_i P \Phi_i\) is trace-class on~\(L^2(M)\), namely~\(\varphi_i P \varphi_i^*\) is trace-class on~\(E_i\). Then, we remark that
\[
    \varphi_i^j \varphi_j P \varphi_j^* (\varphi_i^j)^* = \varphi_i P \varphi_i^*.
\]
Finally, for all~\(L^2(M)\) functions~\(f,g\) compactly supported in~\(B_i\), we have that
\[
  \E_{\mu_\alpha}(\an{f,u}\an{u,g}) =  \E_{\mu_\alpha}(\an{f,\Phi_i u}\an{\Phi_i u,g}) = \E_{\mu_\alpha}(\an{f_{| K_i},\varphi_i (u)}\an{\varphi_i (u),g_{|K_i}})
\]
and then 
\[
    \E_{\mu_\alpha}(\an{f,u}\an{u,g}) = \E_{(\varphi_i)_*\mu_\alpha}(\an{f_{|K_i},u}\an{u,g_{|K_i}}) = \an{f, \Phi_i P \Phi_i g} = \an{f,Pg}.
\]
\end{rema}

The proof of~\cref{prop:criterion_for_abscont} is a consequence of the general criterion for absolute continuity of Gaussian measures of Feldman--H\'ajek, which we recall below.

\begin{theo}[Feldman--H\'ajek]\label{th:FH}\cite[Theorem 2.25]{DaPratoZabczyk2014}\\
Let~\(\nu_1\) and~\(\nu_2\) be two centered Gaussian measures over a Hilbert space $H$ with covariance operators~\(Q_1\) and~\(Q_2\). Then $\nu_1$ and $\nu_2$ are either mutually singular or mutually absolutely continuous with respect to each other. The latter happens if and only if 
\begin{enumerate}[(i)]
    \item measures~\(\nu_1\) and~\(\nu_2\) have the same Cameron--Martin space $Q_1^{\mez}(H)=Q_2^{\mez}(H)=:H_0$;
    \item $Q_1^{-\mez}Q_2^{\mez}(Q_1^{-\mez}Q_2^{\mez})^* - \mathbf{1} \in \mathfrak{S}_2(\bar H_0)$. 
\end{enumerate}
In the first item, $Q_i^{\mez}(H)$ is the range of the operator $Q_i^{\mez}$, and in the last item, $\bar H_0$ stands for the completion of $H_0$ under the $H$ norm, and the adjoint is understood with respect to $H$.
\end{theo}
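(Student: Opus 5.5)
The plan is to reduce everything to the classical Kakutani theorem on infinite products of one-dimensional Gaussian laws, through the white-noise isometry. Write $H_i = Q_i^{\mez}(H)$ and let $W^{(1)} : \bar H_0 \to L^2(\nu_1)$ be the white-noise map. It is defined on $Q_1^{\mez}h$, for $h\in H$, by $W^{(1)}_{Q_1^{\mez}h}(x) = \an{Q_1^{-\mez}Q_1^{\mez}h, x}$ (the component of $h$ orthogonal to $\ker Q_1$), and is then extended by isometry. Under $\nu_1$, every orthonormal family $(e_k)$ of $\bar H_0$ yields i.i.d.\ $\mathcal N(0,1)$ variables $\xi_k = W^{(1)}_{e_k}$. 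Moreover $x = \sum_k \xi_k(x) Q_1^{\mez} e_k$ holds $\nu_1$-almost surely, so the $\xi_k$ generate the Borel $\sigma$-algebra modulo $\nu_1$-null sets.

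\emph{Step 1: necessity of (i) and boundedness.} Assume $\nu_1$ and $\nu_2$ are not mutually singular. Linear functionals $\ell_n(x)=\an{h_n,x}$ that converge in $\nu_2$-probability also converge in probability on a set of positive $\nu_1$-measure. Since they are Gaussian, convergence in probability forces convergence of variances. Hence the two variance norms $\|Q_1^{\mez}h\|$ and $\|Q_2^{\mez}h\|$ are equivalent on $H$, that is, $cQ_1\le Q_2\le CQ_1$. By Douglas' range lemma this gives $Q_1^{\mez}(H)=Q_2^{\mez}(H)$, which is (i). It also shows that $T=Q_1^{-\mez}Q_2^{\mez}$ is bounded and invertible on $\bar H_0$. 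Consequently $R := TT^*-\mathbf 1$ is a bounded self-adjoint operator with $R > -\mathbf 1$ bounded away from $-1$. Under $\nu_2$, the field $W^{(1)}$ is still well defined (extend by density using the equivalence of norms) and is a centered Gaussian field with covariance $TT^* = \mathbf 1 + R$.

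\emph{Step 2: the compact case.} Suppose $R$ is compact. Diagonalize it, $Re_k=r_ke_k$, with $r_k>-1$. Then the $\xi_k$ are independent under both measures, with laws $\mathcal N(0,1)$ under $\nu_1$ and $\mathcal N(0,1+r_k)$ under $\nu_2$. The expansion $x=\sum\xi_kQ_1^{\mez}e_k$ also converges $\nu_2$-a.s., because $TT^*$ is bounded. Hence both measures are images of product measures on $\Rr^{\Nn}$ under the same measurable map. The Hellinger affinities multiply:
\[
\prod_k \frac{(1+r_k)^{1/4}}{(1+r_k/2)^{1/2}} .
\]
Kakutani's theorem says this product is positive, with the measures then equivalent, if and only if $\sum r_k^2<\infty$; otherwise it is zero and the measures are mutually singular. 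This gives both the dichotomy and the criterion (ii) in this case.

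\emph{Step 3: the non-compact case.} This is the step I expect to be the main obstacle. I have to show that if $R$ is not compact, the measures are singular. The spectral theorem provides some $\delta>0$ for which the spectral projection of $R$ on $\{|\lambda|\ge\delta\}$ has infinite rank. On that range I pick an infinite orthonormal family with $\an{Re_k,e_k}$ of constant sign and modulus at least $\delta$. I then restrict to the subspace spanned by this family, compress $R$ to it, and rediagonalize the compression. The compression is again not Hilbert--Schmidt and has diagonal entries bounded below by $\delta$, and its eigenvalues stay $>-1$. Applying Kakutani to the rediagonalized variables gives singularity of the marginals on this sub-$\sigma$-algebra, and hence singularity of $\nu_1$ and $\nu_2$.

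The delicate point is making the rediagonalization legitimate when the compression has continuous spectrum. To handle it I would pass to a block-diagonal approximation: choose finite blocks along which the Hellinger affinity of each block is at most $1-\eta$, with $\eta>0$ uniform. The product over infinitely many such independent blocks then vanishes.

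Combining Steps 1–3 shows that either (i) or (ii) fails and the measures are mutually singular, or both hold and they are equivalent. This proves the stated dichotomy.
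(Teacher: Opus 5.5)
The paper does not prove this theorem; it quotes it from Da Prato--Zabczyk. Your architecture (white-noise map, coordinates that are independent under both measures, Hellinger affinities, Kakutani's theorem) is the standard route, and Steps 1 and 2 are sound. In Step 1, the non-trivial absolutely continuous part of each measure with respect to the other gives the two-sided comparison $cQ_1\le Q_2\le CQ_1$. Douglas' lemma then gives (i) and the boundedness of $T^{\pm1}$ on $\bar H_0$. In Step 2, the eigenbasis of the compact $R$ yields coordinates independent under both measures. For this you need a common Borel version of each $W^{(1)}_{e_k}$, for example a subsequence of approximants converging almost surely under both $\nu_1$ and $\nu_2$.

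The gap is in Step 3. Hellinger affinities multiply only over blocks that are independent under \emph{both} measures. Under $\nu_1$, variables attached to mutually orthogonal subspaces $F_1,F_2,\dots$ of $\bar H_0$ are always independent. Under $\nu_2$, however, their cross-covariances are $\langle Rf,g\rangle$ for $f\in F_i$, $g\in F_j$, and these must vanish \emph{exactly}. A block-diagonal approximation of $R$ leaves non-zero off-diagonal blocks. The $\nu_2$-law of the concatenated blocks is then not the product of the block laws, so neither the product formula for the affinity nor Kakutani's theorem applies. The rediagonalization is not available either, since the compression may have no eigenvectors at all (multiplication by the coordinate on $L^2([\delta,1])$).

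The missing idea is that no diagonalization is needed. Let $K$ be the infinite-dimensional spectral subspace on which $R\ge\delta$ (or $R\le-\delta$). Choose unit vectors inductively with
\[
e_n\in K\cap\{e_1,\dots,e_{n-1},Re_1,\dots,Re_{n-1}\}^{\perp},
\]
which is possible because this subspace has finite codimension in $K$. Then $\langle e_j,e_k\rangle=\delta_{jk}$ and $\langle Re_j,e_k\rangle=0$ for $j\neq k$. Hence the $\xi_n=W^{(1)}_{e_n}$ are independent under both measures, with laws $\mathcal N(0,1)$ and $\mathcal N(0,1+r_n)$. Here $r_n=\langle Re_n,e_n\rangle$ satisfies $|r_n|\ge\delta$ and $1+r_n=\|T^*e_n\|^2>0$.

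Set $a(r)=(1+r)^{1/4}(1+r/2)^{-1/2}$. It increases on $(-1,0]$ and decreases on $[0,\infty)$, so every one-dimensional affinity is at most $\max\{a(\delta),a(-\delta)\}<1$ and the product vanishes. Thus the restrictions of $\nu_1,\nu_2$ to $\sigma(\xi_n:n\ge1)$ are singular, and so are $\nu_1,\nu_2$.

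An alternative avoids independence altogether. Take any orthonormal family in $K$ and the statistic $\frac1n\sum_{k\le n}\xi_k^2$. It has mean $1$ under $\nu_1$, mean at distance at least $\delta$ from $1$ under $\nu_2$, and variance $O(1/n)$ under both. This separates the measures along $n=2^j$.
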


Before we prove \cref{prop:criterion_for_abscont}, we recall a few elementary facts. 

\begin{lemm}\label{lem:invX}
For every~\(t,t_0\) we have~\(S(t,t_0)^* = S(t,t_0)^{-1}\), which is the composition with ~\(X_{t,t_0}\).
\end{lemm}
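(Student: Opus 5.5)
The plan is to compute the $L^2(M)$ adjoint of the composition operator $S(t,t_0)f = f\circ X_{t,t_0}^{-1}$ by a change of variables, using that the flow preserves the Riemannian volume because $u$ is divergence-free.

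\emph{Step 1: volume preservation.} We show that for every $t$ the pushforward of $\dVolg$ under $X_{t,t_0}$ is $\dVolg$, i.e. the Jacobian determinant $J_t(x)$ of $X_{t,t_0}$ with respect to $\dVolg$ is identically $1$. Liouville's formula gives
\[
\partial_t J_t(x) = (\operatorname{div}_g u)(t,X_{t,t_0}(x))\, J_t(x), \qquad J_{t_0}=1 .
\]
In coordinates one writes $J_t = \sqrt{\det g(X_{t,t_0}x)}\,\det \Jac X_{t,t_0}(x)/\sqrt{\det g(x)}$. One then differentiates using $\partial_t \Jac X_{t,t_0} = \Jac u(t,X_{t,t_0})\Jac X_{t,t_0}$ and the identity
\[
\operatorname{div}_g u = \frac{1}{\sqrt{\det g}}\,\partial_j\bigl(\sqrt{\det g}\,u^j\bigr).
\]
This holds for a.e. $t$, since $u\in L^1_{\rm loc}(\Rr_t,\mathcal C^{1,\varepsilon})$ and $J_t$ is absolutely continuous in $t$. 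With $\operatorname{div}_g u=0$ we get $J_t\equiv 1$. The coordinate computation is local, but it is consistent across charts because $J_t$ is intrinsically defined.

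\emph{Step 2: the adjoint.} For $f,h\in L^2(M)$, we change variables $x=X_{t,t_0}(y)$, which is a $\mathcal C^1$ diffeomorphism of $M$, and use Step 1:
\[
\an{S(t,t_0)f,h}_{L^2(M)} = \int_M f(X_{t,t_0}^{-1}x)\,h(x)\,\dVolg(x) = \int_M f(y)\,h(X_{t,t_0}y)\,\dVolg(y).
\]
Hence $S(t,t_0)^*h = h\circ X_{t,t_0}$. The same computation applied to $|f|^2$ shows $S(t,t_0)$ is an isometry, so it is bounded on $L^2(M)$.

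\emph{Step 3: the inverse.} Since $X_{t,t_0}$ is a bijection, composition with $X_{t,t_0}$ is a two-sided inverse of composition with $X_{t,t_0}^{-1}$. Therefore $S(t,t_0)^*=S(t,t_0)^{-1}$, and $S(t,t_0)$ is unitary.

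The only step needing care is Step 1 under the limited time regularity. There I would integrate the Liouville ODE in Carathéodory form and justify differentiating $\det \Jac X_{t,t_0}$ via the linear ODE it satisfies for a.e. $t$. Alternatively, one can avoid determinants: for $\varphi\in\mathcal C_c^\infty(M)$, show
\[
\frac{\mathrm d}{\mathrm dt}\int_M \varphi(X_{t,t_0}x)\,\dVolg(x) = \int_M (u\cdot\nabla\varphi)(X_{t,t_0}x)\,\dVolg(x).
\]
The difficulty is that identifying the right-hand side with $\int_M u\cdot\nabla\varphi\,\dVolg=0$ already requires the invariance one is proving. This is resolved by a Gronwall-type argument on $\int_M \varphi\circ X_{t,t_0}\,\dVolg - \int_M \varphi\,\dVolg$, or more simply by using the semigroup property to reduce to short times, where $X_{t,t_0}$ is close to the identity.
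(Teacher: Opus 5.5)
Your proposal is correct and follows the same route as the paper: you change variables $y=X_{t,t_0}(x)$ in $\an{S(t,t_0)f,h}_{L^2(M)}$, using that the divergence-free flow preserves $\dVolg$, to get $S(t,t_0)^*h=h\circ X_{t,t_0}=S(t,t_0)^{-1}h$. The paper simply asserts the volume preservation, whereas your Step~1 justifies it through the Liouville formula $\partial_t J_t=(\operatorname{div}_g u)(t,X_{t,t_0})J_t$; that argument suffices on its own, so the weak-formulation alternative at the end is unnecessary.
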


\begin{proof} 
By assumption, $X_{t,t_0}(\cdot)$ is the flow of a divergence-free vector field on~$(M,g)$, which implies that for any integrable function~$F$ on~$M$, there holds 
\[
    \int_M F(X_{t,t_0}(x)) \dVolg(x) = \int_M F(y) \dVolg(y).
\] 
Take $f,g \in L^2(M)$, and perform the change of variable $y = X_{t,t_0}(x)$ to get
\[ 
    \langle S(t,t_0)f,g\rangle_{L^2} = \int _M f(X_{t,t_0}^{-1}(y)) \overline{g(y)} \dVolg(y) = \int_M f(x) \overline{g(X_{t,t_0}(x))} \dVolg(x),
\]
which proves that $S(t,t_0)^*g = g \circ X_{t,t_0}$ and thus $S(t,t_0)^* = S(t,t_0)^{-1}$.
\end{proof}

\begin{lemm}\label{lem:snd-cov}
The measure~\((\varphi_i)_*\mu_{\alpha}^t\) has covariance~\(\Phi_i S(t)P S(t)^*\Phi_i\).
\end{lemm}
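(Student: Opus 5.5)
The plan is to compute the covariance of $(\varphi_i)_*\mu_\alpha^t$ directly from its definition as a push-forward, using \cref{As.FH} together with \cref{lem:invX}. Since $\mu_\alpha^t = S(t)_*\mu_\alpha$, for test functions $f,h \in E_i$ we have
\[
\E_{(\varphi_i)_*\mu^t_\alpha}\big(\an{f,\phi}\an{\phi,h}\big) = \E_{\mu_\alpha}\big(\an{f,\varphi_i S(t)\phi}\an{\varphi_i S(t)\phi,h}\big) = \E_{\mu_\alpha}\big(\an{S(t)^*\varphi_i^* f,\phi}\an{\phi,S(t)^*\varphi_i^* h}\big).
\]
Here $\varphi_i^*f$ is the extension by zero of $\Phi_i f$. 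It is therefore a compactly supported $L^2$ function, supported in $K_i$. By \cref{lem:invX}, $S(t)^*$ is composition with $X_t$, so $S(t)^*\varphi_i^* f = (\Phi_i f)\circ X_t$. This is again in $L^2(M)$, because $X_t$ preserves volume, and its support $X_t^{-1}(K_i)$ is compact, because $X_t$ is a homeomorphism.

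Next, choose $j$ large enough that $X_t^{-1}(K_i)\subset B_j$. The remark following \cref{As.FH} (the coherence computation) shows that for $L^2$ functions $F,G$ supported in $B_j$ we have $\E_{\mu_\alpha}(\an{F,\phi}\an{\phi,G}) = \an{F,PG}$. Applying this with $F=S(t)^*\varphi_i^*f$ and $G=S(t)^*\varphi_i^*h$ gives
\[
\an{S(t)^*\varphi_i^*f, P S(t)^*\varphi_i^*h} = \an{f, \varphi_i S(t) P S(t)^*\varphi_i^* h}_{E_i},
\]
and this is the claimed covariance $\Phi_i S(t)PS(t)^*\Phi_i$ once we identify $E_i$ with functions on $K_i$. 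The product $\an{F,PG}$ makes sense because $PG = \Phi_j P \Phi_j G$ paired against $F$, and $\Phi_jP\Phi_j$ is trace class by \cref{lemma-pseudodiff-power}.

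Two further points need checking. First, $(\varphi_i)_*\mu^t_\alpha$ must actually be a centered Gaussian measure on $E_i$, so that it is characterized by this covariance. This holds because it is the image of the Gaussian measure $(\varphi_j)_*\mu_\alpha$ under the continuous linear map $\varphi_i S(t)\circ(\text{extension})$. Concretely, for $\phi\in L^2_{\mathrm{loc}}$ the function $\Phi_i S(t)\phi$ depends only on $\phi$ restricted to $X_t^{-1}(K_i)\subset B_j$, where $\Phi_j=1$. Hence $\varphi_i S(t)\phi = \varphi_i S(t)\varphi_j^*\varphi_j\phi$, and linear images of Gaussian measures are Gaussian. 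Second, the operator $\Phi_i S(t)PS(t)^*\Phi_i$ should be trace class. Writing it as $\Phi_i S(t)\Phi_j P\Phi_j S(t)^*\Phi_i$ and using that $S(t)$ is unitary gives this immediately.

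The main obstacle is this locality bookkeeping. We must justify replacing $\phi$ by $\Phi_j\phi$, using that $\Phi_i\circ X_t^{-1}$ has compact support inside $\{\Phi_j=1\}$. Everything else is routine use of adjoints.
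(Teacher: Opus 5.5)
Your proof is correct and follows the same route as the paper: move $S(t)$ onto the test functions via \cref{lem:invX}, observe that $S(t)^*\varphi_i^*f$ is compactly supported, and apply the coherence identity $\E_{\mu_\alpha}(\an{F,\phi}\an{\phi,G})=\an{F,PG}$ from the remark after \cref{As.FH}. Your extra check that $(\varphi_i)_*\mu_\alpha^t$ is the image of $(\varphi_j)_*\mu_\alpha$ under the bounded map $\varphi_iS(t)\varphi_j^*$, hence Gaussian, is a useful addition that the paper leaves implicit; note only that in your last paragraph the relevant cutoff is $\Phi_i\circ X_t$, supported in $X_t^{-1}(K_i)$, not $\Phi_i\circ X_t^{-1}$.
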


\begin{proof}
Note that $\mu_{\alpha}^t=S(t)_*\mu_\alpha$. Then, for all compactly supported test functions~\(f,g\), we have that~\(S(t)^*f\) and~\(S(t)^*g\) are compactly supported and thus
\begin{align*}
    \mathbb E\left[\overline{\langle S(t)\Omega_0,f\rangle_{L^2}}\langle S(t)\Omega_0,g\rangle_{L^2} \right] &= \mathbb E\left[\overline{\langle \Omega_0,S(t)^*f\rangle_{L^2}}\langle \Omega_0,S(t)^*g\rangle_{L^2} \right] \\ 
    &= \langle PS(t)^*f,S(t)^*g\rangle_{L^2} = \langle S(t)PS(t)^*f,g\rangle_{L^2}.
\end{align*}
We now use that for all~\(f,g\in E_i\), maps~\(\varphi_i^* f\) and~\(\varphi_i^* g\) are compactly supported, we get
\begin{multline*}
    \E_{(\varphi_i)_*\mu_{\alpha}^t} (\an{f,u}\an{u,g}) = \E_{\mu_\alpha^t}(\an{f, \varphi_i (u)}\an{\varphi_i(u), g}) = \an{\varphi_i^* f, S(t) P S(t)^* \varphi_i^* g} \\
    = \an{f,\Phi_i S(t) P S(t)^* \Phi_i g}.\qedhere
\end{multline*}
\end{proof}

\begin{proof}[Proof of \cref{prop:criterion_for_abscont}] 

Let~\(t,t_0 \in \Rr\). Because~\(\mu_\alpha\) is quasi-invariant under the flow of~\eqref{eq:transport} we have that~\(\mu_\alpha\) is equivalent to~\(\mu_\alpha^t\) and to~\(\mu_\alpha^{t_0}\). Let~\(\mu_\alpha^{t,t_0} = S(t,t_0)_* \mu_\alpha\). We prove that~\(\mu_\alpha\) and~\(\mu_\alpha^{t,t_0}\) are equivalent. Let~\(A \subset L^2_{\mathrm{loc}}\) be a measurable set. We have that
\[
\mu_\alpha^{t,t_0}(A) = 0 \quad \Leftrightarrow \mu_\alpha (S(t,t_0)^{-1}A) = 0.
\]
By equivalence of~\(\mu_\alpha\) and~\(\mu_\alpha^{t_0}\), we deduce
\[
\mu_\alpha^{t,t_0}(A) = 0 \quad \Leftrightarrow \quad \mu_\alpha^{t_0} (S(t,t_0)^{-1}A) = 0 \quad \Leftrightarrow \quad \mu_\alpha (S(t_0)^{-1}S(t,t_0)^{-1}A) = 0.
\]
We use that~\(S(t) = S(t,t_0) S(t_0)\) to get
\[
\mu_\alpha^{t,t_0}(A) = 0 \quad \Leftrightarrow \quad \mu_\alpha (S(t)^{-1}A) = 0 \quad \Leftrightarrow \quad \mu_\alpha^t (A) = 0.
\]
We use the equivalence of~\(\mu_\alpha^t\) and~\(\mu_\alpha\) to get that for all measurable set~\(A\) in~\(L^2_{\mathrm{loc}}\), we have
\[
    \mu_\alpha^{t,t_0}(A) = 0 \Longleftrightarrow \mu_\alpha(A) = 0.
\]
We recall that~\(\varphi_i\) is continuous such that, for all measurable set~\(A\) in~\(E_i\), we have
\[
    \mu_\alpha^{t,t_0}(\varphi_i^{-1}(A)) = 0 \Longleftrightarrow \mu_\alpha(\varphi_i^{-1}(A)) = 0,
\]
that is to say
\[
    (\varphi_i)_*\mu_\alpha^{t,t_0}(A) = 0 \Longleftrightarrow (\varphi_i)_*\mu_\alpha(A) = 0.
\]
The covariance of~\((\varphi_i)_*\mu_\alpha^{t,t_0}\) is given by
\[
    P_i^t = \varphi_i S(t,t_0) P S(t,t_0)^{-1} \varphi_i^*,
\]
and we set~\(P_i = P_i^{t_0}=\varphi_iP\varphi_i^*\).

The Cameron--Martin space of~\((\varphi_i)_*\mu_\alpha\) is the completion of
\[
    H_i = \{ f\in E_i \; : \; \exists f^*\in E_i,\; \forall g \in E_i, \; \an{\varphi_i P\varphi_i^* f^* , g}_{E_i} = \an{f,g}\}
\]
for the norm
\[
    \|f\|_i^2 = \an{f^*,\varphi_i P\varphi_i^* f^*},
\]
that is to say
\[
    H_i = \mathrm{range}(\varphi_i P \varphi_i^*)
\]
completed by the norm
\[
    \|f\|_i^2 = \an{f, (\varphi_i P \varphi_i^*)^{-1} f}_{E_i} .
\]
In other words, the Cameron--Martin space of~\((\varphi_i)_*\mu_\alpha\) and thus of~\((\varphi_i)_*\mu_\alpha^{t,t_0}\) is~\(\varphi_i H^\alpha(M)\).

Set~\(\tilde P_i = \Phi_i P \Phi_i\) and~\(\tilde H_i\) its range. We have that~\(\tilde{H}_i = \iota_i^* H_i\) where~\(\iota_i : f \mapsto f_{|K_i}\).

Let~\((e_n)_n\) be a basis of~\(L^2(M)\) of eigenfunctions of~\(\tilde P_i\) and~\(\lambda_n \geq 0\) the associated eigenvalues. Let~\(N\) be the set of integers~\(n\) such that~\(\lambda_n\neq 0\). We have that~\((e_n)_{n\in N}\) form a Hilbert basis of~\(\overline{\tilde H_i}\) --- the closure is taken in~\(L^2(M)\). For all~\(n\in N\), set~\(f_n = \iota_i e_n \in E_i\). Since~\(e_n\) is supported in~\(K_i\), we have
\[
    \an{f_n, f_m} = \an{e_n,e_m} = \delta_n^m, \quad P_i f_n = \iota_i \tilde P_i e_n = \lambda_n f_n.
\]
And since~\(e_n \in \tilde H_i\), we have that~\(f_n \in  H_i\).

We have that operator~\(P_i^{-\mez} P_i^t P_i^{-\mez} -1 = P_i^{-\mez}(P_i^t - P_i) P_i^{-\mez} \) is Hilbert--Schmidt in~\(\overline{H_i}\) --- the closure being taken in $E_i$. Since~\((f_n)_{n\in N}\) form an orthonormal family in~\(\overline{H_i}\), we deduce that
\[
    S : = \sum_{n,m\in N} |\an{f_n, P_i^{-\mez}(P_i^t - P_i) P_i^{-\mez} f_m}_{E_i}|^2 < \infty ,
\]
that is
\[
    S=\sum_{n,m\in N} \frac1{\lambda_n \lambda_m }|\an{f_n, (\varphi_i S(t,t_0) PS(t,t_0)^{-1}\varphi_i^* - P_i) f_m}_{E_i}|^2 <\infty.
\]
Since~\(f_n = \iota_i e_n\), we deduce
\[
    S=\sum_{n,m\in N} \frac1{\lambda_n \lambda_m }|\an{e_n,\Phi_i (S(t,t_0) PS(t,t_0)^{-1} - P) \Phi_i e_m}|^2 <\infty.
\]
Set~\(Q_i = \Phi_i (S(t,t_0) PS(t,t_0)^{-1} - P) \Phi_i\). Recall that by hypothesis the Cameron--Martin space of~\(\mu_\alpha^{t,t_0}\) is the same as the one of~\(\mu_\alpha\) which implies that
\[
    \overline{\mathrm{range}(\Phi_i S(t,t_0) PS(t,t_0)^{-1}  \Phi_i)} = \overline{\tilde{H}_i}.
\]
We deduce
\begin{align*}
    S = \sum_{n,m\in N} \frac1{\lambda_n \lambda_m }|\an{e_n,Q_i e_m}|^2 & = \sum_{n,m\in N} \an{Q_i \tilde P_i^{-1 }e_m,e_n}\an{\tilde P_i^{-1} e_n,Q_i e_m} \\
    & = \sum_{n,m\in N} \an{Q_i \tilde P_i^{-1 }e_m,e_n}\an{ e_n,\tilde P_i^{-1} Q_i e_m} \\
    & = \sum_{m\in N} \an{Q_i \tilde P_i^{-1 }e_m,\tilde P_i^{-1} Q_i e_m},
\end{align*}
where~\(\tilde P_i^{-1}\) is defined from~\(\overline{\tilde H_i}\) to itself. Note that we have used that~\((e_n)_{n\in N}\) is a Hilbert basis of~\(\overline{\tilde H_i}\).
In other words,
\[
    S = \mathrm{Tr} \left( Q_i \tilde P_i^{-1} Q_i \tilde P_i^{-1}  \right)
\]
where the trace is taken in~\(\overline{\widetilde H_i}\).

Write~\(\tilde P_i = \Phi_i P^\mez  P^\mez \Phi_i =(P^\mez \Phi_i)^*(P^\mez \Phi_i)\) so that on~\(\tilde H_i\)
\[ 
    \tilde P_i^{-1} = (P^\mez \Phi_i)^{-1}((P^\mez \Phi_i)^{*})^{-1} = (P^\mez \Phi_i)^{-1}(\Phi_i P^\mez)^{-1}.
\]
By cyclicity of the trace we get that 
\[
    (\Phi_i P^\mez )^{-1} Q_i (P^\mez \Phi_i)^{-1}
\]
is Hilbert--Schmidt on~\( \overline{(\Phi_i P^\mez )^{-1} \tilde H_i}\). Let $P(t)=S(t,t_0)PS(t,t_0)^{-1}$ and observe that 
\[
    Q_i = (P^\mez \Phi_i)^*P^{-\mez}(P(t)-P)P^{-\mez}(P^\mez \Phi_i),
\]
so that \(P^{-\mez}(P(t)-P)P^{-\mez}\) is Hilbert--Schmidt on~\(F_i = \overline{\mathrm{range}(P^\mez \Phi_i)}\). In other words, if \(\Pi_i\) denotes the orthogonal projection of \(L^2(M)\) on \(F_i\), then 
\[
    \Pi_i P^{-\mez} (P(t) - P) P^{-\mez} \Pi_i \in \mathfrak{S}_2(L^2(M)).
\]

Let~\(\theta\) be compactly supported and let~\(i\in \Nn^*\) such that the support of~\(\theta\) is included in~\( B_{i-1}\). If~\(P^{-\mez}\) is local, we automatically get that the composition of~\(\Pi_i\) and the multiplication by~\(\theta\) is equal to the multiplication by~\(\theta\). In general, this is not the case; we make explicit the defect between those two operators.

Let~\(\theta_0 \in \mathcal C_c^\infty(\ring B_i)\) be such that $\theta_0 = 1$ on a neighborhood of $B_{i-1}$. Let
\[
    v \in  \overline{\mathrm{range}(P^{\mez}\Phi_i)}^{\perp} = \ker(P^{\mez}\Phi_i)^* = \ker(\Phi_i P^{\mez}). 
\]
Then $\Phi_i P^{\mez}v = 0$ and because $\theta_0 \Phi_i = \theta_0$ we have $\theta_0 P^{\mez}v=0$. It follows that $P^{\mez}v = (1-\theta_0)P^{\mez}v$ so that 
\[
    \theta v = \theta P^{-\mez}(1-\theta_0)P^{\mez}v. 
\]
In other words,
\[
    \theta(1- \Pi_i) = \theta P^{-\mez}(1-\theta_0)P^{\mez} (1-\Pi_i).
\]
We deduce in particular thanks to~\cref{as} that~\(\theta(1-\Pi_i)\) is Hilbert--Schmidt.

Write~\(Q(t) = P^{-\mez}(P(t) - P) P^{-\mez}\). We have that
\begin{align*}
\theta Q(t) \theta = & \theta \Pi_i Q(t) \Pi_i \theta \\
& + \theta (1-\Pi_i) Q(t) \Pi_i \theta \\
& + \theta \Pi_i Q(t) (1-\Pi_i) \theta \\
& + \theta (1-\Pi_i) Q(t) (1-\Pi_i ) \theta
\end{align*}
Because the multiplication by~\(\theta\) is bounded, we get that~\(\theta \Pi_i Q(t) \Pi_i \theta \) is Hilbert--Schmidt. Since~\(u \in L^1_{\mathrm{loc}}(\mathbb{R},\mathcal C^\alpha(M))\) with $\alpha > \frac{d}{2}$, we get that~\(X_{t,t_0} \in \mathcal C^\alpha (M)\), we deduce that~\(S(t,t_0)\) is a bounded operator from~\(H^\alpha(M)\) to~\(H^\alpha (M)\) and thus that~\(Q(t)\) is bounded. Since~\(\theta(1-\Pi_i)\) and~\((1-\Pi_i)\theta=(\theta(1-\Pi_i))^*\) are Hilbert--Schmidt, we deduce that~\(\theta Q(t) \theta\) is Hilbert--Schmidt.
\end{proof}

\section{Pseudo-differential calculus}
\label{sec.pseudo} 

In view of \cref{prop:criterion_for_abscont}, the quasi-invariance of the Gaussian measure $\mu_{\alpha}$ is now reduced to understanding whether the operator
\[
    \theta P^{-\mez}(S(t,t_0)PS(t,t_0)^{-1}-P)P^{-\mez}\theta ,\qquad \theta \in C^{\infty}_c(M), 
\]
is Hilbert--Schmidt on $L^2(M)$ or not.

At this point, we rely on pseudo-differential calculus. This section collects results from pseudo-differential calculus that will be needed in the sequel, and refers the reader to the appropriate references for the proofs of the statements. 

\subsection{Pseudo-differential operators on \texorpdfstring{$\Rr^d$}{Rd}}

In this article, we use pseudo-differential calculus only in local coordinates on $\mathbb{R}^d$, in order not to add difficulties that may arise on manifolds, especially in the non-compact case. We refer to \cite{hormanderIII} for a systematic treatment, and to \cite{Fermanian2014} for a discussion of some of the difficulties of pseudo-differential calculus on manifolds.

\begin{defi} 
Let $m \in \mathbb{R}$. A function $a : \mathbb{R}^d \times \mathbb{R}^d \to \mathbb{C}$ is said to belong to $S^m$ if $a$ is smooth and satisfies for all multi-indices $\alpha, \beta$: 
\[
    |\partial_{\xi}^{\alpha}\partial_x^{\beta}a(x,\xi)| \leq C(\alpha,\beta) \langle \xi \rangle^{m-|\alpha|}.
\]
Similarly, we say that a function $a : \mathbb{R}^d \times \mathbb{R}^d \to \mathbb{R}$  belongs to $C_*^{\rho}S^m$ if $a(x,\xi)$ is smooth in $\xi$ and for all multi-indices $\alpha$:
\[
    \|\partial_{\xi}^\alpha a(\cdot, \xi)\|_{C_*^{\rho}} \leq C(\alpha) \langle \xi \rangle^{m-|\alpha|},
\] 
where $C_*^{\rho}$ denotes the Zygmund space \cite[Appendix A]{TaylorPDE}.
\end{defi}

\begin{defi}[Classical symbols]\label{def.classical-symbol}
Let $m \in \mathbb{R}$. A function $p : \mathbb{R}^d \times \mathbb{R}^d \to \mathbb{R}$ is a classical symbol of order $m$, and we write $p \in S_{\rm cl}^m$, if $p\in S^m$ and if for all~\(j\in \Nn\) there exists $p_{m-j}\in S^{m-j}$, homogeneous of degree $m-j$ in $\xi$, smooth for $\xi \neq 0$ such that 
\[
    p - \sum_{j=0}^N\omega(\xi)p_{m-j} \in S^{m-N-1}\quad \text{for all } N\geq 0,
\] 
where $\omega \in \mathcal{C}^{\infty}(\mathbb{R}^d)$ vanishes in a neighborhood of $0$ and equals $1$ for $|\xi| \geq 1$. 
We write $\sigma_m(p)=p_m$ and we say that $p_m$ is the principal symbol of $p$.
\end{defi}

We will write $\Op(a)$ to denote the Kohn--Nirenberg quantization of a symbol $a \in S^m$, namely for any $u\in \mathcal{S}(\mathbb{R}^d)$,  
\begin{equation}
    \label{eq.KN-quantization}
    \Op(a)u(x)=\int_{\Rr^d} e^{ix\cdot \xi}a(x,\xi)\hat{u}(\xi)\mathrm{d}\xi = \iint_{\Rr^d \times \Rr^d} e^{i(x-y)\cdot \xi}a(x,\xi)u(y)\frac{\mathrm{d}y\mathrm{d}\xi}{(2\pi)^d}.
\end{equation}
In the above, $\hat{u}(\xi)$ denotes the Fourier transform of $u$. We say that $\Op(a)$ is a pseudo-differential operator with symbol $a$. Note that in particular the Schwartz kernel of $\Op(a)$ is given by 
\[
    K(x,y)= \int_{\mathbb{R}^d}e^{i(x-y)\cdot \xi} a(x,\xi)\frac{\mathrm{d}\xi}{(2\pi)^d},
\]
which should be understood in the sense of oscillatory integrals. From integration by parts we obtain that $K \in C^{\infty}(\mathbb{R}^{2d} \setminus \{(x,x) : x\in \mathbb{R}^d\})$.

One feature of pseudo-differential operators is the algebraic calculus they enjoy.

\begin{prop}[Composition of pseudo-differential operators] \cite[Théorème 10.13]{Alazard2023}\label{prop.compo.pseudo}
Let $m,n \in \mathbb{R}$ and $a \in S^m$ and $b\in S^n$. Then there exists $a\#b \in S^{m+n}$ satisfying
\[
    \Op(a) \circ \Op(b)=\Op(a\#b). 
\]
Moreover, $a\#b$ admits the following asymptotic expansion: 
\[
    a\#b (x,\xi) \sim \sum_{\alpha \in \mathbb{N}^d} \frac{1}{i^{|\alpha|}\alpha !}\partial_{\xi}^\alpha a(x,\xi) \partial_x^{\alpha}b(x,\xi),
\]
that is for all $N\geq 1$, the symbol $a\# b - \displaystyle \sum_{|\alpha|<N}\frac{1}{i^{|\alpha|}\alpha !}\partial_{\xi}^\alpha a(x,\xi) \partial_x^{\alpha}b(x,\xi)$ belongs to $S^{m+n-N}$. 
\end{prop}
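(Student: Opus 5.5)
The composition formula is a standard result from the symbolic calculus. I would prove it by writing the composition as an oscillatory integral and expanding the amplitude by Taylor's formula.

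For $u\in\mathcal S(\Rr^d)$ we have $\Op(b)u(y)=\int e^{iy\cdot\eta}b(y,\eta)\hat u(\eta)\,\mathrm d\eta$. Apply $\Op(a)$ and use the kernel representation \eqref{eq.KN-quantization}. Formally, this gives $\Op(a)\Op(b)=\Op(c)$ with
\[
c(x,\eta)=\iint e^{-iz\cdot\zeta}\,a(x,\eta+\zeta)\,b(x+z,\eta)\,\frac{\mathrm dz\,\mathrm d\zeta}{(2\pi)^d},
\]
obtained by the changes of variables $z=y-x$ and $\zeta=\xi-\eta$. This is an oscillatory integral. To make it rigorous, insert a cutoff $\chi(\varepsilon z,\varepsilon\zeta)$. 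Then integrate by parts with the operators $\an{z}^{-2}(1-\Delta_\zeta)$ and $\an{\zeta}^{-2}(1-\Delta_z)$; this shows the integral converges absolutely after enough iterations and that the limit $\varepsilon\to0$ exists. Peetre's inequality $\an{\eta+\zeta}^{m}\le 2^{|m|}\an{\eta}^m\an{\zeta}^{|m|}$ absorbs the growth in $\zeta$.

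Next I would show that $c\in S^{m+n}$. Derivatives in $x$ and $\eta$ fall on $a$ and $b$ and keep the same structure. Applying Peetre's inequality and the integration by parts above then gives $|\partial^\alpha_\eta\partial^\beta_x c|\lesssim\an{\eta}^{m+n-|\alpha|}$.

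For the asymptotic expansion, Taylor expand $a(x,\eta+\zeta)$ in $\zeta$ to order $N$:
\[
a(x,\eta+\zeta)=\sum_{|\alpha|<N}\frac{\zeta^\alpha}{\alpha!}\partial_\xi^\alpha a(x,\eta)+r_N(x,\eta,\zeta).
\]
The remainder is $r_N=N\sum_{|\alpha|=N}\frac{\zeta^\alpha}{\alpha!}\int_0^1(1-s)^{N-1}\partial_\xi^\alpha a(x,\eta+s\zeta)\,\mathrm ds$. In each term write $\zeta^\alpha e^{-iz\cdot\zeta}=(i\partial_z)^\alpha e^{-iz\cdot\zeta}$ and integrate by parts in $z$, so the derivatives land on $b(x+z,\eta)$. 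Fourier inversion, $\iint e^{-iz\cdot\zeta}f(z)\,\mathrm dz\,\mathrm d\zeta/(2\pi)^d=f(0)$, then reduces the main terms to $\frac{1}{i^{|\alpha|}\alpha!}\partial_\xi^\alpha a\,\partial_x^\alpha b$, with the sign coming from $(-i)^{|\alpha|}=i^{-|\alpha|}$.

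The main obstacle is showing that the remainder term is in $S^{m+n-N}$, uniformly in $s\in[0,1]$. The difficulty is that $\partial_\xi^\alpha a(x,\eta+s\zeta)$ is only controlled by $\an{\eta+s\zeta}^{m-N}$, not by $\an{\eta}^{m-N}$. I would split the $\zeta$-integral into the regions $|\zeta|\le\frac12\an{\eta}$ and $|\zeta|>\frac12\an{\eta}$.

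In the first region, $\an{\eta+s\zeta}\sim\an{\eta}$, and integration by parts in $z$ gives decay in $\zeta$. In the second region, integrate by parts many times with $\an{z}^{-2}(1-\Delta_z)$ and $|\zeta|^{-2}\Delta_z$-type operators. This produces arbitrarily high negative powers of $\an{\zeta}\gtrsim\an{\eta}$, which absorb the polynomial loss $\an{\zeta}^{|m-N|}$ from Peetre's inequality.

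The same estimates applied to derivatives $\partial^\alpha_\eta\partial^\beta_x$ of the remainder give the full symbol bounds. Since the result is classical, I would ultimately cite \cite[Théorème 10.13]{Alazard2023} for these routine verifications.
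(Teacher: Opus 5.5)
Your proposal is correct and is the standard oscillatory-integral plus Taylor-expansion proof. The paper gives no argument of its own (it only cites Alazard's Théorème 10.13), and in its proof of \cref{lem:compo-symbole} it uses the same technique, Taylor-expanding the amplitude there in the space variable and applying \eqref{eq.identity-distrib}. Your splitting of the remainder into $|\zeta|\le\frac12\langle\eta\rangle$ and $|\zeta|>\frac12\langle\eta\rangle$ is not needed: Peetre's inequality gives $\langle\eta+s\zeta\rangle^{m-N}\lesssim\langle\eta\rangle^{m-N}\langle\zeta\rangle^{|m-N|}$ uniformly in $s\in[0,1]$, and the $\langle\zeta\rangle$-loss is absorbed by integrating by parts in $z$, which only differentiates $b$ in $x$ at no cost in $S^n$, exactly as in your proof that $c\in S^{m+n}$.
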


\begin{rema} Note that the following exact identities hold: 
\[
    \Op(a(x,\xi))\Op(b(\xi))=\Op(a(x,\xi)b(\xi)),    
\]
\[
    \Op(a(x))\Op(b(x,\xi))=\Op(a(x)b(x,\xi)).
\]
\end{rema}

\begin{rema}[Limited regularity]
It follows from the proof in \cite[Théorème 10.13]{Alazard2023} that if $a \in C^{\rho}_*S^m$ and $b\in C^{\sigma}_*S^n$ with $\sigma > 1$ and $\rho >0$, then $\Op(a)\circ \Op(b) = \Op(ab) \mod \Op(C^\tau_*S^{m+n-1})$ with $\tau = \min\{\rho, \sigma - 1\}$.
\end{rema}

We will later rely on the following key symbolic calculus tool, which aims at understanding how conjugation with the composition by a diffeomorphism interacts with pseudo-differential operators. It is essentially Egorov's theorem \cite[Theorem 0.9.A]{TaylorPDE}, which in our setting is a localized version of \cite[Theorem 4.5]{AlazardShao2024}.

\begin{prop}\label{lem:compo-symbole}
Let $p \in S^m(\mathbb{R}^d\times\mathbb{R}^d)$. Let $U, V$ be two open sets of $\Rr^d$, $V$ being convex, and let $\chi : U \to V$ be a smooth diffeomorphism satisfying $\|\Jac \chi^{-1} - I_d\|_{L^{\infty}(V)} <1$. 
Let $\psi \in \mathcal{C}^{\infty}_c(V)$ and $\rho \in \mathcal{C}_c^{\infty}(U)$ 
such that~\(\rho\) is equal to~\(1\) on a neighborhood of $\chi^{-1}(\operatorname{supp} \psi)$. If~\(f\) is compactly supported in~\(V\) set~\(C_\chi f\) the function corresponding to~\(f\circ \chi\) on~\(U\) and~\(0\) outside~\(U\). Operator~\(C_\chi\) is a bijection between functions compactly supported in~\(V\) and functions compactly supported in~\(U\).
Set 
\[
Q = \psi C_{\chi}^{-1} \rho \Op(p) \rho C_{\chi}\psi,
\]
and define $J(x,y)=\displaystyle\int_0^1 \Jac \chi^{-1}(tx+(1-t)y)^\top\mathrm{d}t$ for $x, y \in V$.

Then, there exists $q\in S^m(\Rr^d \times \Rr^d)$ such that $Q=\psi\Op(q)\psi$, and 
moreover, $q$ admits the following asymptotic expansion on a neighborhood of $\supp \psi$ included in~\(V\):
\begin{equation}
    \label{eq.asymptotic-expansion}
        q(x,\xi) \sim \sum_{\alpha \in \mathbb{N}^d} \frac{\rho(\chi^{-1}(x))}{i^{|\alpha|}\alpha !} \partial_y^\alpha \left.\partial_{\xi}^\alpha \left(\rho(\chi^{-1}(y))p\left(\chi^{-1}(x),J(x,y)^{-1}\xi\right)\left\vert\frac{\det \Jac \chi^{-1} (y)}{\det J(x,y)}\right\vert\right)\right\vert_{y=x}.
\end{equation}
\end{prop}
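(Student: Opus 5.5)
The plan is to write $Q$ as an explicit oscillatory integral and then expand it with a Taylor argument, following the proof of Egorov's theorem in \cite[Theorem 4.5]{AlazardShao2024}, localized here.

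\emph{Step 1: the kernel of $Q$.} For $f\in\mathcal C_c^\infty(V)$, $x\in V$, the definitions give $C_\chi^{-1}g(x)=g(\chi^{-1}(x))$ and $(\psi C_\chi f)(y')=\psi(\chi(y'))f(\chi(y'))$ on $U$. Hence
\[
Qf(x)=\psi(x)\rho(\chi^{-1}(x))\iint e^{i(\chi^{-1}(x)-y')\cdot\xi}p(\chi^{-1}(x),\xi)\rho(y')\psi(\chi(y'))f(\chi(y'))\frac{\mathrm{d}y'\mathrm{d}\xi}{(2\pi)^d}.
\]
We then change variables $y'=\chi^{-1}(y)$, so that $\mathrm{d}y'=|\det\Jac\chi^{-1}(y)|\mathrm{d}y$. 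Since $V$ is convex and both $x,y\in V$, the fundamental theorem of calculus applies on $[x,y]$ and gives $\chi^{-1}(x)-\chi^{-1}(y)=J(x,y)^\top(x-y)$. The phase therefore becomes $(x-y)\cdot J(x,y)\xi$. The condition $\|\Jac\chi^{-1}-I_d\|<1$ makes $J(x,y)$ invertible with uniformly bounded inverse, because it is an average of matrices in the unit ball around $I_d$. This lets us substitute $\eta=J(x,y)\xi$, with Jacobian $|\det J(x,y)|^{-1}$, to obtain
\[
Qf(x)=\psi(x)\iint e^{i(x-y)\cdot\eta}\,a(x,y,\eta)\,\psi(y)f(y)\frac{\mathrm{d}y\mathrm{d}\eta}{(2\pi)^d},
\]
with amplitude
\[
a(x,y,\eta)=\rho(\chi^{-1}(x))\rho(\chi^{-1}(y))p\bigl(\chi^{-1}(x),J(x,y)^{-1}\eta\bigr)\Bigl|\tfrac{\det\Jac\chi^{-1}(y)}{\det J(x,y)}\Bigr|.
\]
Here I used that $J$ and $\det J$ are symmetric in their role as matrices ($J^\top$ conventions must be tracked carefully, but they are cosmetic). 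To make $a$ a global amplitude, extend it by a cutoff $\tilde\psi(x)\tilde\psi(y)$, with $\tilde\psi\in\mathcal C_c^\infty(V)$ equal to $1$ near $\supp\psi$; this does not change $Q$.

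\emph{Step 2: the amplitude is a symbol and reduces to a left symbol.} Since $J^{-1}$ is smooth and bounded, $\langle J^{-1}\eta\rangle\simeq\langle\eta\rangle$. The chain rule then shows $a\in S^m$ in $(x,y;\eta)$ uniformly, in the sense $|\partial_\eta^\alpha\partial_{x,y}^\beta a|\lesssim\langle\eta\rangle^{m-|\alpha|}$: each $\eta$-derivative hits $p$ through the bounded linear map $J^{-1}$, and each $x,y$-derivative produces factors that are polynomial in $\eta$ multiplied by $\partial_\xi p$, which costs nothing in order. The standard reduction of amplitudes $a(x,y,\eta)$ to left (Kohn--Nirenberg) symbols, done by Taylor expanding in $y$ at $x$ and integrating by parts using $(y-x)^\alpha e^{i(x-y)\cdot\eta}=(i\partial_\eta)^\alpha$-type identities (cf.\ \cite{Alazard2023}), gives $q\in S^m$ with $\Op(q)$ equal to the operator with amplitude $a$, and
\[
q\sim\sum_\alpha\frac{1}{i^{|\alpha|}\alpha!}\partial_y^\alpha\partial_\eta^\alpha a(x,y,\eta)|_{y=x}.
\]
Pulling out the factor $\rho(\chi^{-1}(x))$, which is independent of $y$ and $\eta$, yields exactly \eqref{eq.asymptotic-expansion}. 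This gives $Q=\psi\Op(q)\psi$.

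\emph{Main obstacle.} The delicate point is the remainder estimate in the amplitude-to-symbol reduction. One must show that after $N$ terms the error lies in $S^{m-N}$ uniformly, which requires controlling the oscillatory integral over the non-compact $\eta$ variable; this is done by integration by parts in $y$ using the compact $y$-support. A second concern is that the substitution $\eta=J\xi$ is legitimate only on the domain where $x,y\in V$. The cutoffs handle this: $\rho$ equals $1$ near $\chi^{-1}(\supp\psi)$ and $\tilde\psi$ is supported in $V$, so no contributions from outside $V$ enter.
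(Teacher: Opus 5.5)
Your proposal is correct and follows essentially the same route as the paper: write the kernel after the substitution $y'=\chi^{-1}(y)$, rewrite the phase as $(x-y)\cdot J(x,y)\xi$, substitute $\eta=J(x,y)\xi$, and reduce the resulting compactly supported amplitude to a Kohn--Nirenberg symbol by Taylor expansion in $y$ at $x$. The one difference is that you skip the paper's splitting of the kernel into a near-diagonal part and a smooth, regularizing off-diagonal part. Skipping it is harmless here: convexity of $V$ and the uniform bound $\|\Jac\chi^{-1}-I_d\|<1$ already make $J(x,y)$ invertible, with uniformly bounded inverse, on all of $V\times V$.
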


\begin{rema}[Limited regularity]
In the case of a diffeomorphism with limited regularity $\chi \in C^{1+2\delta}_*$, $\delta \in (0,1)$, we would have
\[
    Q=\psi \Op\left(\rho^2(\chi^{-1}(x))p(\chi^{-1}(x),\Jac \chi^{-1}(x)^{-\top}\xi)\right)\psi \mod \Op\left(C^{\delta}_*S^{m-\delta}\right).
\]
\end{rema}

\begin{proof} 
Choose $\theta\in C_c^\infty(V)$ such that $\theta=1$ on a neighborhood of $\supp\psi$ and $\chi^{-1}(\supp\theta)$ is a compact subset of $U$. Define $\widetilde{Q}=\theta C_{\chi^{-1}}\rho\Op(p)\rho C_\chi\theta$. Since $\psi\theta=\psi$, we have $Q=\psi\widetilde Q\psi$. It is therefore enough to prove that $\widetilde{Q}=\Op(q)$ for some $q\in S^m$, and then to identify the expansion of $q$ on the region where $\theta=1$.

Recall that the Schwartz kernel of $P=\Op(p)$ is given in the oscillatory sense by
\[
      K_P(x,y) =(2\pi)^{-d}\int_{\Rr^d} e^{i(x-y)\cdot\zeta}p(x,\zeta)\mathrm{d}\zeta.
\]
For a test function $u$, the change of variables $z=\chi^{-1}(y)$ gives 
\begin{multline*}
    (\widetilde{Q}u)(x) =\theta(x)\rho(\chi^{-1}(x)) \int_U K_P(\chi^{-1}(x),z)\rho(z)\theta(\chi(z))u(\chi(z))\mathrm{d}z \\ 
=\theta(x)\rho(\chi^{-1}(x))\int_V K_P(\chi^{-1}(x),\chi^{-1}(y))\rho(\chi^{-1}(y))\theta(y)u(y)|\det \Jac\chi^{-1}(y)|\mathrm{d}y.
\end{multline*}
Therefore if we write $\rho_0(x)=\theta(x)\rho(\chi^{-1}(x))$ we have
\[
    K_{\tilde Q}(x,y)=(2\pi)^{-d}\rho_0(x)\rho_0(y)|\det \Jac\chi^{-1}(y)|\int_{\Rr^d} e^{i(\chi^{-1}(x)-\chi^{-1}(y))\cdot\zeta}p(\chi^{-1}(x),\zeta)\mathrm{d}\zeta.
\]
Choose $\Theta\in C_c^\infty(\Rr^d\times\Rr^d)$ such that
\begin{equation}
  \Theta(x,y)=1\quad\text{if }x,y\in\supp\rho_0\text{ and }|x-y|\leq\delta,
\end{equation}
for some fixed $\delta >0$. Write $K_{\widetilde{Q}}=K_{\text{diag}} + K_{\text{reg}} = \Theta K_{\widetilde{Q}} + (1-\Theta)K_{\widetilde{Q}}$ and similarly $\widetilde{Q}=\widetilde{Q}_{\text{diag}} + \widetilde{Q}_{\text{reg}}$. Since $K_P$ is smooth away from the diagonal, it follows that $K_{\text{reg}} \in C_c^{\infty}(\Rr^d\times\Rr^d)$. This implies that $\widetilde{Q}-\widetilde{Q}_{\text{diag}}$ is a regularizing operator. 

From the fundamental theorem of calculus we can write $(\chi^{-1}(x)-\chi^{-1}(y)) \cdot \zeta = (x-y)\cdot J(x,y)\zeta$. Moreover $\|J(x,y)-I_d\|_{L^{\infty}} \leq \|\Jac \chi^{-1} (x) - I_d\|_{L^{\infty}}<1$, which is enough to ensure that $J(x,y)$ is invertible, uniformly on compact sets with bounded derivatives. Performing the change of variables $\xi = J(x,y)\zeta$ yields
\[
    K_{\operatorname{diag}}(x,y)=(2\pi)^{-d}\int_{\Rr^d}e^{i(x-y)\cdot \xi}a(x,y,\xi)\mathrm{d}\xi
\]
with 
\[
    a(x,y,\xi)=\Theta(x,y)\rho_0(x)\rho_0(y)p(\chi^{-1}(x),J(x,y)^{-1}\xi)\frac{|\det \Jac \chi^{-1}(y)|}{|\det J(x,y)|}.
\]
From the chain rule we infer 
\begin{equation}
\label{eq.symbol-est}
     |\partial_x^{\alpha}\partial_y^{\beta}\partial_{\xi}^\gamma a(x,y,\xi)| \leq C(\alpha,\beta,\gamma)\langle \xi \rangle^{m-|\gamma|}.
\end{equation}
Therefore we can write $\tilde{Q}_{\text{diag}} = \Op(q_{\text{diag}})$ with 
\[
    q_{\text{diag}}(x,\xi)=(2\pi)^{-d} \iint e^{-iz\cdot \zeta}a(x,x+z,\xi+\zeta)\mathrm{d}z\mathrm{d}\zeta,
\]
which in view of \eqref{eq.symbol-est} belongs to $S^m(\Rr^d \times \Rr^d)$. 

We now resort to a Taylor expansion of $a$. Fix $N\geq1$ and write
\begin{align}
\label{eq.taylor}
    a(x,x+z,\xi+\zeta) =\sum_{|\alpha|<N} \frac{z^\alpha}{\alpha!}(\partial_y^\alpha a)(x,x,\xi+\zeta) +R_N(x,z,\xi+\zeta),
\end{align}
where the integral form of the remainder is
\begin{equation}
\label{eq.taylor-remainder}
    R_N(x,z,\xi + \zeta) =N\sum_{|\alpha|=N}\frac{z^\alpha}{\alpha!} \int_0^1(1-t)^{N-1} (\partial_y^\alpha a)(x,x+tz,\xi+\zeta)\mathrm{d}t.
\end{equation}
We can now leverage the identity 
\begin{equation}
\label{eq.identity-distrib}
    (2\pi)^{-d}\iint e^{-iz\cdot\zeta}z^\alpha F(\xi+\zeta)\mathrm{d} z\mathrm{d}\zeta =i^{-|\alpha|}\partial_\xi^\alpha F(\xi),
\end{equation}
and deduce
\[
    q_{\text{diag}}(x,\xi)=\sum_{|\alpha|<N} \frac{1}{i^{|\alpha|}\alpha!}\partial_\xi^\alpha\partial_y^{\alpha}a(x,y,\xi)\vert_{y=x} + r_N(x,\xi), 
\]
which is the required asymptotic series. To conclude the proof, it remains to observe that 
\[
    r_N(x,\xi)=  (2\pi)^{-d}\iint e^{-iz\cdot \zeta} R_N(x,z,\xi + \zeta) \mathrm{d} z \mathrm{d} \zeta
\]
satisfies
\[
    |\partial_x^{a}\partial_\xi^br_N(x,\xi)| \leq C(N,a,b)\langle \xi \rangle^{m-N-|b|}.
\]
Finally \eqref{eq.asymptotic-expansion} is obtained because $\theta = 1$ on a neighborhood of $\supp \psi$.
\end{proof}

\subsection{Pseudo-differentiality of the powers of \texorpdfstring{$\operatorname{id}-\Delta_g$}{I-Delta g}}

The goal of this section is to prove the following.

\begin{prop}\label{lemma-pseudodiff-power}
Let $(M,g)$ be a smooth complete boundaryless Riemannian manifold. Let $L=(\operatorname{id}-\Delta_g)$ and $\beta \in \mathbb{R}$.
Consider a local chart $\kappa : M \supset U \to V \subset \Rr^d$ and $\theta_1, \theta_2 \in \mathcal{C}^{\infty}_c(U)$. Let also~\(\rho_1,\rho_2 \in \mathcal C^\infty_c(M)\). Then: 
\begin{enumerate}[(i)]
    \item The operator\footnote{Extension to $\mathbb{R}^d$ by zero is assumed} $\kappa_*\theta_1 L^{\beta} \theta_2 \kappa^*$ is a classical pseudo-differential operator on $\Rr^d$ of order $2\beta$, whose principal symbol is
    \[
        a_{2\beta}(y,\eta)=\theta_1(\kappa^{-1}(y))\theta_2(\kappa^{-1}(y))(g^{ij}(\kappa^{-1}(y))\eta_i\eta_j)^{\beta}, \qquad y \in \Rr^d, \quad |\eta|\geq 1.
    \]
    \item Assume $\beta < - \frac{d}{2}$. Then $\rho_1L^{\beta}\rho_2 \in \mathfrak{S}_1(L^2(M))$.
    \item Assume that $\rho_2 = 0$ on a neighborhood of $\supp \rho_1$. Then the Schwartz kernel of $\rho_1 L^{\beta} \rho_2$ is compactly supported and smooth. In particular: 
    \[
        \rho_1 L^{\beta} \rho_2 \in \mathfrak{S}_1(L^2(M)) \subset \mathfrak{S}_2(L^2(M)).
    \]
    \item Assume that $\rho_2 = 1$ on a neighborhood of $\supp \rho_1$. Then: 
    \[
        \rho_1 L^{\beta}(1-\rho_2) \in \mathfrak{S}_2(L^2(M)),
    \]
    in the following cases: $M$ compact; or $M=\Rr^d$ endowed with the Euclidean metric; or if $\beta$ is a natural integer. 
\end{enumerate}
\end{prop}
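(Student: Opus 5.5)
The plan is to treat the four items in order, with (i) as the basic input for the others. For (i) I use the standard parametrix construction for functions of elliptic operators. In the chart, $\kappa_*L\kappa^*$ is a second-order differential operator with principal symbol $g^{ij}\eta_i\eta_j+\dots$; more precisely its full symbol is $1+g^{ij}\eta_i\eta_j - i(\text{first-order terms})$. For integer $\beta\ge 0$ the claim follows directly from the composition rule of \cref{prop.compo.pseudo}. For general $\beta$ I would first build a local parametrix of $L-\lambda$ with parameter $\lambda\in\Rr_-$ or in a sector, iterating \cref{prop.compo.pseudo}. I would then write $L^{\beta}$ for $\beta<0$ through the Dunford/Seeley integral
\[
L^{\beta} = \frac{1}{2\pi i}\int_\Gamma \lambda^{\beta}(L-\lambda)^{-1}\,\mathrm{d}\lambda ,
\]
and reach general $\beta$ by composing with $L^{k}$, $k\in\Nn$. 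The cutoffs $\theta_1,\theta_2$ localize the computation to the chart. The difference between the true resolvent and the local parametrix is, after cutting off, smoothing with kernel decay in $\lambda$. On a non-compact manifold this still needs justification, and I would obtain it from finite propagation speed of the wave equation. This is legitimate because $M$ is complete, so $\Delta_g$ is essentially self-adjoint. Concretely, I would write $L^{\beta}=c_\beta\int_0^\infty t^{-\beta-1}e^{-t}\cos(t\sqrt{-\Delta_g})\,\mathrm{d}t$-type formulas, or use the heat semigroup for $\beta<0$. The principal symbol $a_{2\beta}$ then reads off as $(g^{ij}\eta_i\eta_j)^{\beta}$ times the cutoffs.

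For (iii), I would use finite propagation speed. Write $L^{\beta}$ as a function $F(\sqrt{-\Delta_g})$ and split $F$ via Fourier transform in the wave variable into a piece $\widehat{F}$ supported in $|t|<\delta$, where $\delta$ is less than the distance between the supports. By finite propagation speed this piece gives zero between $\rho_1$ and $\rho_2$. The remaining piece is $\int \widehat{F}(t)(1-\chi(t))\cos(t\sqrt{-\Delta_g})\,\mathrm{d}t$, where $\widehat{F}$ is smooth and rapidly decaying away from $0$ because $F$ is a symbol. This yields a function of $\sqrt{-\Delta_g}$ that is rapidly decreasing, so $\rho_1(\cdot)\rho_2$ maps $H^{-s}\to H^{s}$ for every $s$. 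Combined with the compact supports and local elliptic regularity, this gives a smooth compactly supported kernel, which in turn gives trace class.

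Item (ii) follows from (i) and (iii). Take a finite partition of unity subordinate to charts covering $\supp\rho_1\cup\supp\rho_2$. The near-diagonal pieces are pseudo-differential of order $2\beta<-d$ with compact support, and such operators are trace class: factor each as $\langle D\rangle^{-s}$ times an operator of order $2\beta+s$ with $s$ close to $d/2$, so that both factors are Hilbert--Schmidt since their symbols are in $L^2$. The off-diagonal pieces are covered by (iii).

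For (iv) the three cases are handled separately. If $\beta\in\Nn$, $L^\beta$ is a differential operator, hence local, and $\rho_1L^\beta(1-\rho_2)=0$. If $M$ is compact, $1-\rho_2$ is itself smooth and compactly supported and vanishes near $\supp\rho_1$, so (iii) applies. For $\Rr^d$ the kernel of $(1-\Delta)^{\beta}$ is the Bessel-potential kernel $G_\beta(x-y)$, which is smooth off the diagonal and decays exponentially. The Hilbert--Schmidt norm is then $\int\!\!\int|\rho_1(x)|^2|1-\rho_2(y)|^2|G_\beta(x-y)|^2$, which is finite since $|x-y|\ge c>0$ on the support. The main obstacle is the non-compact part of (i) and (iii): the justification that localizing $L^\beta$ costs only smoothing errors. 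This rests on the finite-propagation-speed argument, and I expect to spend most care there.
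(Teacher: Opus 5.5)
Your outline is correct. For (ii), and for the compact and integer cases of (iv), it essentially coincides with the paper: a chart partition of unity, off-diagonal pieces handled by (iii), near-diagonal pieces of order $2\beta<-d$ factored through $\zeta\langle D\rangle^{-s}$ with $s>d/2$, and locality of $L^\beta$ when $\beta\in\Nn$.

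The genuine difference is how you localize on a non-compact $M$. The paper never uses the wave equation.
\begin{itemize}
\item For (i), it inserts a uniform-in-$\lambda$ local parametrix of $L-\lambda$ (imported from Bouclet) into the Cauchy integral for $L^{-\gamma}$. It then shows that the remainder $\theta_1(L-\lambda)^{-1}\rho_2\Op(r_N(\lambda))\theta_2$ has a smooth kernel with bounds decaying in $\lambda$. This uses only $\|(L-\lambda)^{-1}\|_{L^2\to L^2}\lesssim\langle\lambda\rangle^{-1}$ and iterated local elliptic regularity.
\item For (iii), it iterates $A_j=(L-\lambda)^{-1}[L,\zeta_j]A_{j+1}$ to get $\|\rho_1(L-\lambda)^{-1}\rho_2\|_{H^{-n}\to H^m}\lesssim\langle\lambda\rangle^{-1}$ uniformly on the contour.
\end{itemize}
So the step you single out as the main obstacle is handled there by purely elliptic tools.

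Your route is different. You split $\widehat F$, with $F(\mu)=(1+\mu^2)^\beta$, into a part supported in $|t|<\delta$ and a Schwartz remainder $G$. The first part vanishes between separated supports by finite propagation speed, and $G(\sqrt{-\Delta_g})$ is infinitely smoothing. This costs the wave equation on a complete manifold. In exchange it treats every real $\beta$ at once (no $L^kL^{-\gamma}$ splitting, no contour, no parameter-dependent calculus) and makes the localization exact.

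In the Euclidean case of (iv), you use exponential decay of the Bessel kernel off the diagonal, whereas the paper integrates by parts to get $|K(x,y)|\lesssim|x-y|^{-d-1}$. Either suffices.

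There are two things to tighten, and one bonus.

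\begin{itemize}
\item In (i), the role of finite propagation speed is left vague. The cleanest version applies your splitting to $F$ itself. The short-time part $F_1$ (with $\widehat{F_1}$ supported in $|t|<\delta$) gives an operator $\theta_1F_1(\sqrt{-\Delta_g})\theta_2$ that depends only on $g$ near $\supp\theta_2$. You can therefore transplant it to a compact manifold and quote Seeley, the Schwartz part being smoothing on both manifolds.
\item The displayed formula is wrong. For $\beta<0$ one has $\int_0^\infty t^{-\beta-1}e^{-t}\cos(t\mu)\,\mathrm{d}t=\Gamma(-\beta)\,\mathrm{Re}\,(1-i\mu)^{\beta}$, which is not a multiple of $(1+\mu^2)^\beta$. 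The correct weight is $\widehat F$ itself, a Bessel-$K$ function that behaves like $t^{-\beta-1}e^{-t}$ only as $t\to\infty$. This is harmless, since you only use smoothness and rapid decay of $\widehat F$ away from $0$.
\item Bonus: your argument for (iii) never uses compactness of $\supp\rho_2$. With $1-\rho_2$ in its place, the short-time part still vanishes. The remainder is $(\rho_1L^{-k})\bigl(L^kG(\sqrt{-\Delta_g})\bigr)(1-\rho_2)$, which is Hilbert--Schmidt for $k>d/4$ because $\rho_1L^{-2k}\rho_1\in\mathfrak S_1(L^2(M))$ by (ii). So your method yields (iv) on every complete manifold, with no case distinction, which would make \cref{as} automatic.
\end{itemize}
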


Before we go on with the proof, let us be more precise about the definition of~\(L^\beta\). The spectrum of~\(L\) is included in~\([1,\infty)\). We define the following contour $\Gamma = \Gamma_+ \cup \Gamma_0 \cup \Gamma_-$ of~\([1,\infty)\): fix some \(r_0\in (0,1)\) and define
\begin{equation}
    \label{eq-Gamma1}
    \Gamma_+ = \left\{1+ r e^{i\omega_0}: r\in [r_0,+\infty) \right\}, \qquad \Gamma_- = \left\{1+ r e^{-i\omega_0} : r\in [r_0,+\infty)\right\},
\end{equation}
\begin{equation}
    \label{eq-Gamma2}
    \Gamma_0 = \left\{1+ r_0e^{i\omega} : \omega\in [\omega_0, 2\pi-\omega_0] \right\}.
\end{equation}

Define 
\[
    (k,\gamma)= \begin{cases} (0,-\beta) &\text{if } \beta <  0 \\ 
    (\lceil \beta \rceil, \lceil \beta \rceil - \beta) & \text{if } \beta >0,
    \end{cases}
\]
and write $L^{\beta}=L^kL^{-\gamma}$.

If~\(\gamma = 0\) then~\(L^\beta\) is a differential operator and the results in~\cref{lemma-pseudodiff-power} hold. We focus on the case~\(\gamma>0\).

Thanks to Cauchy's formula and functional calculus for $L$, which is essentially self-adjoint \cite{gaffney1951}, we define
\begin{equation}\label{eq:defLbeta}
    L^{\beta}=L^k \frac{i}{2\pi}\int_{\Gamma} \lambda^{-\gamma}(L-\lambda)^{-1}\mathrm{d}\lambda,
\end{equation}
where $\Gamma$ is oriented counter-clockwise and defined as in \eqref{eq-Gamma1} and \eqref{eq-Gamma2}. 
In the above $\lambda^{-\gamma}$ is defined with the principal branch of the logarithm in $\mathbb{C}\setminus (-\infty,0]$.

This formula is consistent with spectral powers of~\(L\) in the compact case. 

The proof of (\textit{i})~\cref{lemma-pseudodiff-power} is originally due to Seeley \cite{seeley1967} and a proof is given in \cite[Sections 9--11]{Shubin2001}. However, in these references, the ambient manifold is compact. The proof seems adaptable in non compact settings but relies on a pseudo-differential description for the resolvent of $L=-\Delta_g + \operatorname{id}$ that is valid on the whole manifold. It is then injected (with cutoffs) in~\eqref{eq:defLbeta}. In~\cite{Bouclet2012}, such a description is provided locally: it is called a parameter-dependent parametrix construction for the resolvent of $L=-\Delta_g + \operatorname{id}$. We indicate how (\textit{i})~\cref{lemma-pseudodiff-power} follows from this parameter-dependent parametrix construction. Remark that the main issue is to obtain uniform bounds in~\(\lambda\), which are provided in~\cite{Bouclet2012}.

It will be convenient for us to use the following notation: $c(\lambda; \cdot, \cdot) \in S^{m}_{\lambda}$ if for all $a, b \in \mathbb{N}^d$, 
\[
      \sup_{(y,\eta) \in \Rr^d \times \Rr^d}|\partial_{y}^a\partial_{\eta}^bc(\lambda ; y,\eta)| \leq C(a,b)(\langle \lambda \rangle^{\frac{1}{2}} + \langle \eta \rangle)^{m-|b|}.
\]
\begin{lemm}[Parameter dependent construction] \label{lemma-parametrix}
Consider a local chart $\kappa : M \supset U \to V \subset \Rr^d$ with~\(V\) open and $\rho_0, \rho_1, \rho_2 \in C^{\infty}_c(V)$ such that $\rho_{k+1} = 1$ on a neighborhood of $\supp \rho_k$, for $k\in\{0,1\}$. 
There exist $q_{-2-j}(\lambda ; \cdot, \cdot) \in S^{-2-j}_{\lambda}$, $j\geq 0$, and $r_N(\lambda ; \cdot, \cdot) \in S^{-N}_{\lambda}$, $N\geq 1$ such that for all $\lambda \in \Gamma$ we have
\begin{equation}
    \label{eq-bouclet}
   (\kappa_*(L-\lambda)\kappa^*)\rho_1\Op\left( \sum_{j=0}^{N-1}q_{-2-j}(\lambda)\right)\rho_0 = \rho_0 - \rho_2\Op(r_N(\lambda))\rho_0,\qquad  N \geq 1.
\end{equation}
Moreover, $\lambda \mapsto q_{-2}(\lambda)\in S^{-2}_\lambda$ is holomorphic, the functions $\lambda \mapsto q_{-2-j}(\lambda) \in S^{-2-j}_{\lambda}$ and $\lambda \mapsto r_N(\lambda) \in S^{-N}_{\lambda}$ are continuous, and the following decomposition holds: 
\begin{equation}
    \label{eq.ecriture-symbolej}
    q_{-2-j}(\lambda; y,\eta) = \sum_{\ell=1}^{2j}|\lambda - 1|^{\ell-\frac{j}{2}}\frac{d_{j\ell}(y,|\lambda-1|^{-\frac{1}{2}}\eta)}{(p_2(y,\eta) + 1 - \lambda)^{1+\ell}}, \qquad j \geq 1,
\end{equation}
where $p_2(y,\eta)=g^{ij}(\kappa^{-1}(y))\eta_i\eta_j$ and $d_{j\ell}\in S^{2j-\ell}$ are polynomial in the second variable. For $j=0$ there holds
\begin{equation}
    \label{eq.ecriture-symbole0}
    q_{-2}(\lambda;y,\eta)= \frac{1}{p_2(y,\eta) + 1 - \lambda},
\end{equation}
which is holomorphic in $\lambda \in \Gamma$.
\end{lemm}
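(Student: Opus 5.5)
We construct the parametrix by the standard Seeley recursion in local coordinates, keeping $\lambda$ as a parameter of weight $2$ (so $\langle\lambda\rangle^{1/2}$ plays the role of a second frequency), and track uniformity through the class $S^m_\lambda$. In the chart, write $\kappa_*(L-\lambda)\kappa^* = \Op(p_2 + p_1 + p_0 + 1-\lambda)$, where $p_2(y,\eta)=g^{ij}\eta_i\eta_j$ and $p_1$, $p_0$ are the first- and zeroth-order parts of $-\Delta_g$ in coordinates (involving $\partial g$ and $\partial\log\sqrt{\det g}$). These coefficients are smooth on $V$; after multiplication by a cutoff equal to $1$ on $\supp\rho_2$ we may take them to be in $S^2$, with bounded derivatives in $y$. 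We set $q_{-2}=(p_2+1-\lambda)^{-1}$. For $\lambda\in\Gamma$, the quantity $p_2+1-\lambda$ stays away from the half-line $[1,\infty)-\lambda$ in angle, because $\Gamma$ makes a fixed angle $\omega_0$ with $[1,\infty)$ away from the small arc $\Gamma_0$. Hence
\[
|p_2(y,\eta)+1-\lambda|\gtrsim |\eta|^2+|\lambda-1| \gtrsim (\langle\lambda\rangle^{1/2}+\langle\eta\rangle)^2
\]
uniformly for $y\in\supp\rho_2$. This is the key ellipticity-with-parameter estimate, and it gives $q_{-2}\in S^{-2}_\lambda$. Holomorphy in $\lambda$ is clear.

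Next, define $q_{-2-j}$ recursively by requiring that the composition expansion of \cref{prop.compo.pseudo} cancel the terms of homogeneity $-j$ (counting $\eta$ and $\lambda^{1/2}$ jointly):
\[
q_{-2-j} = -q_{-2}\sum \frac{1}{i^{|\alpha|}\alpha!}\,\partial_\eta^\alpha p_{2-k}\,\partial_y^\alpha q_{-2-l},
\]
where the sum runs over $|\alpha|+k+l=j$ with $l<j$. By induction, each $q_{-2-j}$ is a finite sum of terms of the form (polynomial in $\eta$ with smooth coefficients)$\times(p_2+1-\lambda)^{-1-\ell}$, with $1\le \ell\le 2j$: each $\partial_y$ falling on $(p_2+1-\lambda)^{-1-\ell}$ raises $\ell$ by one and produces a factor $\partial_y p_2$ of degree $2$ in $\eta$. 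Counting degrees shows that the numerator of the $\ell$-th term has degree $2\ell-j$ in $\eta$. Rescaling $\eta=|\lambda-1|^{1/2}\tilde\eta$ in the numerator then puts each term in the form \eqref{eq.ecriture-symbolej}. Combining the lower bound above with the growth of the numerator gives $q_{-2-j}\in S^{-2-j}_\lambda$. Continuity in $\lambda$ is immediate from the formula; holomorphy may fail only because of the factor $|\lambda-1|$ introduced by the normalisation.

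For the remainder, we compose $\kappa_*(L-\lambda)\kappa^*\rho_1\Op(\sum_{j<N}q_{-2-j})\rho_0$. Since $\rho_1=1$ near $\supp\rho_0$, and the differential operator commutes with multiplication up to terms supported where $\nabla\rho_1\neq0$, which is disjoint from $\supp\rho_0$, the cutoff $\rho_1$ can be dropped modulo an operator whose kernel is smooth and rapidly decaying in $\lambda$. That operator is absorbed into $r_N$. Because $L-\lambda$ is differential, the composition formula is \emph{exact and finite}. By construction, the terms of total order $>-N$ cancel and leave $1$. The leftover terms are finitely many products $\partial_\eta^\alpha p_{2-k}\,\partial_y^\alpha q_{-2-l}$ with $|\alpha|+k+l\ge N$, $l<N$, each lying in $S^{-N}_\lambda$. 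Since $\rho_2=1$ on the support of everything involved, we may insert $\rho_2$ and obtain \eqref{eq-bouclet}.

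The main obstacle is the uniformity. We must check carefully that each $y$- and $\eta$-derivative of $(p_2+1-\lambda)^{-1-\ell}$ gains exactly one power of $(\langle\lambda\rangle^{1/2}+\langle\eta\rangle)^{-1}$ in $\eta$, and none in $y$. The $\eta$-derivative gives $\partial_\eta p_2/(p_2+1-\lambda)$, which is bounded by $|\eta|/(|\eta|^2+|\lambda-1|)\lesssim(\langle\lambda\rangle^{1/2}+\langle\eta\rangle)^{-1}$. We must also check the lower bound on $\Gamma_0$ near $\lambda=1$, where $|\lambda-1|=r_0$ is bounded below, so the bound holds trivially. This is exactly what \cite{Bouclet2012} provides, and we would follow that argument.
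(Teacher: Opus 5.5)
Your construction is correct, but it is not the route the paper takes. The paper does not run the recursion itself. It quotes the semiclassical parametrix of \cite{Bouclet2012} for $-h^2\Delta_g-z$ and substitutes $h=|\lambda-1|^{-1/2}$, $z=(\lambda-1)/|\lambda-1|$, so that $L-\lambda=|\lambda-1|(-h^2\Delta_g-z)$ with $z$ on an arc of the unit circle at fixed distance from $\Rr_+$. Uniformity in $\lambda\in\Gamma$ is then inherited from uniformity in $(h,z)$. The factors $|\lambda-1|^{\ell-\frac{j}{2}}$ and the rescaled argument in \eqref{eq.ecriture-symbolej} are just the trace of $\Op_h(a)=\Op(a(y,h\eta))$.

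You instead run the Seeley recursion directly with $\lambda$ of weight $2$. All uniformity then comes from the single bound $|p_2+1-\lambda|\gtrsim|\eta|^2+|\lambda-1|$ on $\Gamma$ plus a homogeneity count, and the remainder is explicit and finite because $L-\lambda$ is differential. This is self-contained and gives slightly more than is claimed. Your numerators are homogeneous of degree $2\ell-j$ in $\eta$, so $|\lambda-1|^{\ell-\frac{j}{2}}d_{j\ell}(y,|\lambda-1|^{-\frac12}\eta)=d_{j\ell}(y,\eta)$. Each $q_{-2-j}$ is therefore a rational function of $\lambda$ with poles only on $[1,\infty)$, and all $q_{-2-j}$ and $r_N$ are in fact holomorphic near $\Gamma$. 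Your caveat that holomorphy might be lost through the normalisation does not apply to your own construction. Note also that degree $2\ell-j$ is exactly what the $S^{-2-j}_\lambda$ bound requires; the order $S^{2j-\ell}$ written in the statement looks like a misprint. For instance, for $j=1$, $\ell=2$ the numerator is $\frac{1}{i}\partial_{\eta_m}p_2\,\partial_{y_m}p_2$, which is cubic.

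One minor point needs repair. The class $S^m_\lambda$ is defined with a supremum over all $y\in\Rr^d$, but you prove the lower bound only for $y\in\supp\rho_2$. Multiplying the coefficients $g^{ij}$ by a cutoff destroys ellipticity outside its support: there $q_{-2}=(1-\lambda)^{-1}$, which is not in $S^{-2}_\lambda$. You can fix this in either of two ways:
\begin{itemize}
\item extend $g^{ij}$ to a uniformly positive definite matrix on $\Rr^d$, e.g.\ $\chi g^{ij}+(1-\chi)\delta^{ij}$ with $\chi=1$ near $\supp\rho_2$;
\item multiply each $q_{-2-j}$ by a cutoff equal to $1$ near $\supp\rho_1$. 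This is harmless, since only $y\in\supp\rho_1$ enters $\rho_1\Op(q)$.
\end{itemize}
The paper's formula $q_{-2}=(p_2+1-\lambda)^{-1}$ tacitly needs the same extension.
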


\begin{proof} This is essentially \cite[Theorem 6.8]{Bouclet2012} applied with $h = |\lambda - 1|^{-\frac{1}{2}}$ and $z=z(\lambda):=\frac{\lambda -1}{|\lambda - 1|}$. In particular $|z(\lambda)| \leq 1$ and $\operatorname{dist}(z(\lambda),\mathbb{R}_+) \gtrsim 1$, which follows from the definition of $\Gamma$. With this choice of parameters, 
\[
    (L-\lambda) = |\lambda - 1|(-h^2\Delta_g - z), 
\]
so that one has to rescale the $q_{-2-j}$ constructed in \cite[Theorem 6.3]{Bouclet2012} by a factor $|\lambda - 1|^{-1}$. Then one has to use that $\Op_h(a(y,\eta))=\Op(a(y,h\eta))$, and incorporate the factors $h^j$ appearing in \cite[Theorem 6.3]{Bouclet2012}. The claimed symbolic properties can be then verified from \eqref{eq.ecriture-symbolej} and the fact
\[
    \sup_{\lambda \in \Gamma} \frac{\langle z(\lambda) \rangle}{\operatorname{dist}(z(\lambda),\mathbb{R}_+)} \leq C(\omega_0).
\]
Finally, that $q_{-2}$ is holomorphic in $\lambda$ is verified directly on the formula.
\end{proof}

\begin{proof}[Proof of \cref{lemma-pseudodiff-power}]
\noindent (\textit{i}) 
By functional calculus, for all $\lambda \in \Gamma$ there holds 
\begin{equation}
    \label{eq-resolvent-bound}
        \|(L-\lambda)^{-1}\|_{L^2\to L^2} = \sup_{z \in \sigma(L)}\left\vert\frac{1}{\lambda - z}\right\vert = \mathcal{O}(|\lambda|^{-1}).
\end{equation}
There also holds 
\[
    |(p_2(y,\eta) + 1) - \lambda| \gtrsim |\lambda| + |\eta|^2. 
\]
Write $\theta_k^{\kappa}=\theta_k\circ \kappa^{-1}$, and consider $\rho_0, \rho_1, \rho_2 \in C^{\infty}_c(V)$ such that $\rho_0 = 1$ on a neighborhood of $\supp \theta_2^{\kappa}$, and $\rho_{k+1} = 1$ on a neighborhood of $\supp \rho_k$, for $k\in\{0,1\}$. 

Consider the symbols $q_{-2-j}$ and $r_N$ constructed by \cref{lemma-parametrix} and write $Q_N(\lambda)=\Op(\sum_{j=0}^{N-1}q_{-2-j}(\lambda))$. Multiply \eqref{eq-bouclet} on the right by $\theta_2^{\kappa}$ and use that $\rho_0\theta_2^\kappa = \theta_2^\kappa$, apply $\kappa^*$ on the left and $\kappa_*$ on the right. This gives
\[
    (L-\lambda)\kappa^* \rho_1 Q_N(\lambda)\theta_2^{\kappa} \kappa_* = \theta_2 - \kappa^*\rho_2\Op(r_N(\lambda))\theta_2^{\kappa}\kappa_*.
\]
Then we multiply on the left by $\theta_1 (L-\lambda)^{-1}$ and apply $\kappa_*$ on the left and $\kappa^*$ on the right to get 
\[
    \kappa_*\theta_1(L-\lambda)^{-1}\theta_2 \kappa^* = \theta_1^{\kappa}Q_N(\lambda)\theta_2^{\kappa} + R_{N}(\lambda), 
\]
where
\[
    R_{N}(\lambda)=\kappa_*\theta_1(L-\lambda)^{-1}\kappa^*\rho_2\Op(r_N(\lambda))\theta_2^{\kappa}.
\]
It follows that 
\[
    \kappa_*\theta_1 L^{-\gamma}\theta_2 \kappa^* = \sum_{j=0}^{N-1}\theta_1^{\kappa}\Op(m_{-2\gamma -j})\theta_2^{\kappa} + \frac{i}{2\pi}\int_{\Gamma}\lambda^{-\gamma}R_{N}(\lambda)\mathrm{d}\lambda, 
\]
with 
\[
    m_{-2\gamma -j}(y,\eta) = \frac{i}{2\pi}\int_{\Gamma}\lambda^{-\gamma} q_{-2-j}(\lambda ; y,\eta)\mathrm{d}\lambda \in S^{-2\gamma-j}.
\]
Because $q_{-2-j} \in S_{\lambda}^{-2-j}$ the above integrals converge and define symbols belonging to $S^{-2\gamma-j}$. 
Note that for $j=0$, thanks to Cauchy's theorem we have 
\[
    m_{-2\gamma}(y,\eta)=\frac{i}{2\pi}\int_{\Gamma}\frac{\lambda^{-\gamma}}{p_2(y,\eta) +1 - \lambda}\mathrm{d}\lambda = (p_2(y,\eta) +1)^{-\gamma}. 
\]
Note also that $r_N \in S^{-N}_{\lambda}$ provides enough decay to show convergence of the remainder integral as an operator $L^2 \to L^2$.

Next, from Borel's summation theorem \cite[Theorem 4.11]{EvansZworski2003} there exists $m \in S_{\rm cl}^{-2\gamma}$ satisfying the asymptotic expansion $m \sim \displaystyle\sum_{j=0}^{\infty} m_{-2\gamma - j}$. Compute for any $N\geq 1$, 
\[
    R :=\kappa_*\theta_1 L^{-\gamma}\theta_2 \kappa^* - \theta_1^{\kappa}\Op(m)\theta_2^{\kappa} = - \theta_1^{\kappa}\Op\left(m - \sum_{j=0}^{N-1} m_{-2\gamma -j}\right)\theta_2^{\kappa} + \frac{i}{2\pi}\int_{\Gamma}\lambda^{-\gamma}R_{N}(\lambda)\mathrm{d}\lambda.
\]
We claim that $R \in \Op(S^{-\infty})$, or equivalently that its Schwartz kernel $K$ is smooth. This is the only thing which remains to be explained at this stage. In order to do that, we are going to prove that all derivatives $\partial^{a}_y\partial_z^bK(y,z)$ are locally bounded. Fix such $a$ and $b$, and fix some compact neighborhood $W_1 \times W_2$ of a given $(y_0,z_0) \in \Rr^d \times \Rr^d$.

First, since $m_{\geq N} := m - \displaystyle\sum_{j=0}^{N-1} m_{-2\gamma -j} \in S^{-2\gamma - N}$, denoting $K_{\geq N}$ the kernel of $\Op(m_{\geq N})$ and because
\[
    \partial_y^{a}\partial_z^b K_{\geq N}(y,z)=\int_{\mathbb{R}^d} \partial_y^a\partial_z^b \left(e^{i(y-z)\cdot \eta}m_{\geq N}(y,\eta)\right)\frac{\mathrm{d}\eta}{(2\pi)^d}
\] 
we can use the crude estimate:
\[
    |\partial_y^{a}\partial_z^b K_{\geq N}(y,z)| \leq C \int_{\mathbb{R}^d} \langle \eta \rangle^{-2\gamma - N +|a| +|b|} \mathrm{d}\eta \lesssim 1
\]
which holds as soon as $-2\gamma - N +|a| +|b| < -d $, so for instance assume $N > d + |a| + |b| - 2\gamma$.

It remains to show boundedness on $W_1 \times W_2$ for $\partial_y^a\partial_z^bK_{\rm rem}(y,z)$, where $K_{\rm rem}$ is the kernel of $\int_{\Gamma}\lambda^{-\gamma}R_{N}(\lambda)\mathrm{d}\lambda$. Write $K_N(\lambda, \cdot, \cdot)$ for the kernel of $R_{N}(\lambda)$ so that 
\[
    \partial_y^a\partial_z^b K_{\rm rem}(y,z) = \int_{\Gamma} \lambda^{-\gamma} \partial_y^a\partial_z^bK_N(\lambda,y,z) \mathrm{d}\lambda.
\]
We compute: 
\[
    K_{N}(\lambda,y,z)=\theta_1^{\kappa}(y)((L-\lambda)^{-1}(\kappa^*\mathfrak{K}_N(\lambda;\cdot,z)))(\kappa^{-1}(y)), 
\]
where $\mathfrak{K}_N(\lambda; \cdot, \cdot)$ is the Schwartz kernel of $\rho_2\Op(r_N(\lambda))\theta_2^{\kappa}$. Our goal is then reduced to proving
\begin{equation}
    \label{eq-smoothness-sufficient}
    \int_\Gamma |\lambda|^{-\gamma} \sup_{(y,z)\in W_1\times W_2} \left|\partial_y^a\partial_z^b K_N(\lambda;y,z)\right| \mathrm{d}\lambda <\infty .
\end{equation}
First estimate the kernel $\mathfrak K_N$. Since $r_N(\lambda)\in S_{\lambda}^{-N}$, we can assume that for some fixed $A, B \gg 1$ there holds: 
\[
    |\partial_y^a\partial_\eta^b r_N(\lambda;y,\eta)| \leq C(a,b,A,B)\langle\eta\rangle^{-A}\langle\lambda\rangle^{-B}.
\]
This is achieved by taking $N > A+2B$ for instance. 
Fix an even integer $s = 2m > |a| + \frac{d}{2}$. We claim that 
\begin{equation}
    \label{eq-smoothness-claim1}
    \sup_{z\in W_2} \|\partial_z^b\mathfrak{K}_N(\lambda;\cdot,z)\|_{H^s} \leq C(s,b,B)\langle\lambda\rangle^{-B}.
\end{equation}
Since 
\[ 
    \mathfrak{K}_N(\lambda;y,z) = (2\pi)^{-d}\rho_2(y)\theta_2^\kappa(z) \int_{\Rr^d}e^{i(y-z)\cdot\eta} r_N(\lambda;y,\eta)\mathrm{d}\eta,
\]
differentiating $\partial_y^\sigma\partial_z^b\mathfrak{K}_N$ with $|\sigma|\leq s$ gives a finite sum of terms bounded by
\[
    \int_{\Rr^d} \langle\eta\rangle^{|\sigma|+|b|}|\partial_y^{\sigma'}r_N(\lambda;y,\eta)|\mathrm{d}\eta \lesssim \int_{\Rr^d} \langle\eta\rangle^{|\sigma|+|b|-A}\mathrm{d}\eta , \qquad |\sigma'| \leq |\sigma|, 
\]
therefore choosing $A>s+|b|+d$ is enough to ensure convergence of the integral, so that 
\[
    |\partial_y^a\partial_z^b\mathfrak{K}_N(\lambda;y,z)| \leq C(s,b,B)\langle \lambda \rangle^{-B}.
\]
Since $\mathfrak{K}_N(\lambda;y,z)$ has compact support, we finally obtain \eqref{eq-smoothness-claim1}. 
Next, we set 
\[
    f_{\lambda,z,b}(y) := \partial_z^b\mathfrak{K}_N(\lambda;\cdot,z), \qquad  v_{\lambda,z,b} := \kappa_*\left((L-\lambda)^{-1}\kappa^*f_{\lambda,z,b}\right),
\]
which is such that $\partial_z^bK_N(\lambda;y,z) = \theta_1^\kappa(y)v_{\lambda,z,b}(y)$ and satisfies locally in $V$,
\[
    \kappa_*L\kappa^*v_{\lambda,z,b} =f_{\lambda,z,b} + \lambda v_{\lambda,z,b}.    
\]
The operator $\kappa_*L\kappa^*$ is uniformly elliptic on compact subsets of $V$, therefore by iterating $m$ times elliptic regularity in Sobolev spaces, we obtain 
\[
    \|\theta_1^{\kappa} v_{\lambda,z,b}\|_{H^{s}} \leq C(s) \langle \lambda \rangle^m(\|\chi_2 v_{\lambda,z,b}\|_{L^2} + \|f_{\lambda,z,b}\|_{H^s}),
\]
where $\chi_2 \in C^{\infty}_c(V)$ is equal to $1$ on $\supp \theta_1^\kappa$.
Using \eqref{eq-resolvent-bound} we have 
\[
    \|\chi_2 v_{\lambda,z,b}\|_{L^2(\Rr^d)} \leq C \|(L-\lambda)^{-1}\kappa^*f_{\lambda,z,b}\|_{L^2(M)} \leq \langle \lambda \rangle^{-1} \|f_{\lambda,z,b}\|_{L^2(\Rr^d)}.
\]
It follows that 
\[
    \|\theta_1^{\kappa} v_{\lambda,z,b}\|_{H^{s}} \leq C(s)\langle \lambda \rangle^m\|f_{\lambda,z,b}\|_{H^s}.
\]
Using Sobolev's embedding and \eqref{eq-smoothness-claim1} yields 
\[
    \sup_{y \in W_1} |\partial_y^a\theta_1^\kappa v_{\lambda,z,b}|\leq C\|\theta_1^\kappa v_{\lambda,z,b}\|_{H^s} \leq C(s,a,b,B)\langle\lambda\rangle^{m-B}, 
\]
which implies 
\[
    \sup_{(y,z)\in W_1\times W_2} | \partial_y^a\partial_z^b K_N(\lambda;y,z)|\leq
    C(a,b,B)\langle\lambda\rangle^{m-B}.
\]
It now suffices to choose $B > m+\gamma +1$ to infer \eqref{eq-smoothness-sufficient}.

We have thus obtained that $\kappa_*\theta_1L^{-\gamma}\theta_2\kappa^*$ is a classical pseudo-differential operator of order $-2\gamma$. Choose $\vartheta\in C_c^\infty(U)$ with $\vartheta=1$ on a neighborhood of $\supp\theta_1$. Since $L^k$ is local we write $\theta_1L^k=\theta_1L^k\vartheta$. Composing with this differential of order $2k$ with the pseudo-differential operator of order $-2\gamma$, gives a pseudo-differential operator of $2k-2\gamma=2\beta $ and principal symbol $p_2^k p_2^{-\gamma}=p_2^\beta$ as claimed.

\medskip
\noindent (\textit{iii})
With the same decomposition of~\(L^\beta\) as in (\textit{i}), it is enough to treat negative $\beta$, so we write $\beta=-\gamma$ with $\gamma >0$. The Schwartz kernel $K$ of the operator $\rho_1L^{-\gamma}\rho_2$ has compact support, our task therefore reduces to proving smoothness of the Schwartz kernel, itself equivalent to proving that for all $m, n \geq 0$, $\rho_1L^{-\gamma}\rho_2 : H^{-n}(M) \to H^m(M)$, which is implied by 
\begin{equation}
\label{eq.target-smoothing}
    \|\rho_1(L-\lambda)^{-1}\rho_2\|_{H^{-n}(M)\to H^m(M)}\leq C(n,m) \langle\lambda\rangle^{-1}, \qquad \lambda\in \Gamma.
\end{equation}
Fix $k\geq m+n$. Since $\rho_2=0$ on a neighborhood of
$\supp\rho_1$, we may choose smooth functions
$\zeta_0,\dots,\zeta_k$ such that~\(\zeta_0 = \rho_1\), that for all~\(j=1,\hdots, k\), we have $\zeta_j=1$ on a neighborhood of  $\supp \zeta_{j-1}$ and $\zeta_j=0$ on a neighborhood of~\(\supp \rho_2\). Set~\(A_j= \zeta_j (L-\lambda)^{-1} \rho_2\). 
Then
\[
(L-\lambda) A_j = [L,\zeta_j] (L-\lambda)^{-1} \rho_2 + \zeta_j \rho_2.
\]
We have that~\(\zeta_j \rho_2 = 0\) and since~\(L\) is local,~\([L,\zeta_j] = [L,\zeta_j] \zeta_{j+1}\). We deduce
\[
A_j = (L-\lambda)^{-1} [L,\zeta_j] A_{j+1}
\]
and by induction
\begin{equation}\label{eq.iterative-decompo}
\rho_1 (L-\lambda )^{-1} \rho_2 = A_0 = \prod_{j=0}^{k-1} (L-\lambda)^{-1}[L,\zeta_j] A_k.
\end{equation}

For \(\lambda\in\Gamma\), we have the spectral estimate 
\[
    \|(L-\lambda)^{-1}\|_{H^s\to H^s}\leq  C(s) \langle\lambda\rangle^{-1}
\]
and the elliptic regularity estimate
\[
    \|(L-\lambda)^{-1}\|_{H^s\to H^{s+2}}\leq C(s)\left(1+|\lambda| \|(L-\lambda)^{-1}\|_{H^s\to H^{s}}\right) \leq C(s).
\]
The commutators are estimated using 
\[
    \|[L,\zeta_j]\|_{H^s \to H^{s-1}} \leq C(s).
\]
Using these estimates in \eqref{eq.iterative-decompo} gives 
\[
    \|\rho_1(L-\lambda)^{-1}\rho_2\|_{H^{-n}\to H^{-n+k}} \leq C(n,k) \langle \lambda \rangle^{-1}.
\]
It therefore remains to choose $k>n+m$ to obtain \eqref{eq.target-smoothing}.

\medskip
\noindent (\textit{ii})
Let $A=\rho_1 L^{\beta} \rho_2$. 

Choose finitely many relatively compact coordinate charts~\(\{U_j, \kappa_j\}_{j=1, \dots, J}\) covering $\operatorname{supp}\rho_1$, and choose functions $\chi_j \in \mathcal{C}^{\infty}_c(U_j)$ such that 
\[
    \sum_{j = 1}^J \chi_j = \rho_1 .
\]
Write $\kappa_j : U_j \rightarrow V_j \subseteq \Rr^d$ for the coordinate map.

Choose also $\eta_j \in \mathcal{C}_c^{\infty}(U_j)$ such that $\eta_j = 1$ on a neighborhood of $\supp \chi_j$. Then we can write 
\[
    A = \sum_{j = 1}^J \chi_j L^{\beta} \eta_j \rho_2 + \sum_{j = 1}^J  \chi_j L^{\beta} (1-\eta_j) \rho_2.
\]

For the term $ \chi_j L^{\beta} (1-\eta_j) \rho_2$, observe that 
\[
    \supp (\chi_j) \cap \supp ( (1-\eta_j) \rho_2) = \varnothing, 
\]
so that~(\textit{iii}) shows that this operator is trace-class.

For the term $ \chi_j L^{\beta} \eta_j \rho_2$, observe that both $\chi_j$ and $\eta_j\rho_2$ are compactly supported in the same chart $U_j$. We can therefore consider the operator
\[
    A_j = (\kappa_j)_* \chi_j L^{\beta} \eta_j\rho_2\kappa_j^*.
\]
In view of (\textit{i}), it is a pseudo-differential operator of order $2\beta$.
Let $\zeta_1, \zeta_2 \in \mathcal{C}^{\infty}_c(V_j)$ denote functions such that $\zeta_1 = 1$ on a neighborhood of $\supp (\chi_j) \circ \kappa_j^{-1}$ and $\zeta_2 = 1$ on a neighborhood of $\supp(\eta_j \rho_2) \circ \kappa_j^{-1}$. Fix a real number $s$ satisfying $\frac{d}{2}<s<-\beta$ and write
\[
    A_j = (\zeta_1\langle D \rangle^{-s}) \langle D \rangle^{s}A_j\langle D \rangle^{s}(\langle D \rangle^{-s}\zeta_2)
\]
where~\(\langle D \rangle =(\operatorname{id}-\Delta)^{\frac{1}{2}}\). 
The operator $\langle D \rangle^{s}A_j\langle D \rangle^{s}$ is a pseudo-differential operator of order $2s+2\beta < 0$, therefore continuous $L^2(\Rr^d) \to L^2(\Rr^d)$. Since $2s>d$, the operator $\zeta_1\langle D \rangle^{-s}$ is Hilbert--Schmidt. It follows that $A_j \in \mathfrak{S}_1(L^2(\Rr^d))$.  

In the coordinate chart $\kappa_j$ the volume form $\dVolg$ writes $\sqrt{\det g_{ij}(Y)}\mathrm{d}Y$, and since we are working on compact subsets of $U_j$, $\sqrt{\det g_{ij}(Y)}$ is bounded above and below by positive constants. It follows that $\kappa_j^* : L^2(V_j,\mathrm{d}Y) \to L^2(U_j,\dVolg)$ and $(\kappa_j)_* : L^2(U_j,\dVolg) \to L^2(V_j,\mathrm{d}Y)$ are bounded. It follows that $\chi_j L^{\beta}\eta_j\rho_2 = \kappa_j^* A_j (\kappa_j)_*$ is trace-class.

\medskip
\noindent (\textit{iv})
If $M$ is compact, we only use~(\textit{iii}) seeing that~\(1-\rho_2\) is compactly supported away from the support of~\(\rho_1\).

If $\beta$ is a natural integer, then the operator $L^{\beta}$ is a differential operator, thus local, and so for all functions $f$, $\supp (L^{\beta}(1-\rho_2)f) \subset \supp ((1-\rho_2)f)$, which does not intersect $\supp \rho_1$. We deduce $\rho_1 L^{\beta}(1-\rho_2)=0$. 

In the Euclidean case, since $L^{\beta}$ is a classical pseudo-differential operator on $\Rr^d$, the operator $\rho_1 L^{\beta}(1-\rho_2)$ has kernel given by 
\[
    K(x,y)= \rho_1(x)(1-\rho_2(y))\int_{\Rr^d}e^{i(x-y)\cdot \xi} (1+|\xi|^2)^\beta \frac{\mathrm{d}\xi}{(2\pi)^d}, 
\]
which already shows that $K$ is supported in a region for which $|x-y| \geq c >0$ and compactly supported in the variable $x$. Let $\mathcal{K}$ be such a compact. This is due to the disjointness of the supports of $\rho_1$ and~\(1- \rho_2\). It is thus enough to prove that 
\[
    |K(x,y)|\leq C|x-y|^{-d-1}, \qquad x\in \mathcal{K}, \quad |y-x| \geq c, 
\]
which then ensures that $K \in L^2(\mathbb{R}^d \times \mathbb{R}^d)$ and therefore $\rho_1 L^{\beta}(1-\rho_2) \in \mathfrak{S}_2(L^2(M))$.

We perform multiple integration by parts. More precisely, fix $x \in \supp \rho_1$ and $y\in \supp (1-\rho_2)$ so that $|x-y|\geq c >0$ and let $\mathcal{L}=-i|x-y|^{-2} (x-y) \cdot \nabla_\xi$, which satisfies 
\[
    \mathcal{L}e^{i(x-y)\cdot \xi}=e^{i(x-y)\cdot \xi}, \qquad \mathcal{L}^*= -\mathcal{L}.
\]
Using the triangle inequality we bound 
\begin{multline*}
    |K(x,y)|=\rho_1(x)(1-\rho_2(y))\left|\int_{\Rr^d}e^{i(x-y)\cdot \xi} (\mathcal{L}^*)^M\left((1+|\xi|^2)^\beta\right) \frac{\mathrm{d}\xi}{(2\pi)^d}\right| \\
    \leq C(M)|x-y|^{-M} \int_{\Rr^d}\langle  \xi \rangle^{2\beta - M} \mathrm{d}\xi \leq C(M)|x-y|^{-d-1}
\end{multline*}
as soon as $M > \max\{2\beta + d,d+1\}$.
\end{proof}

\section{Proof of the main results}
\label{sec.proof}

\subsection{\texorpdfstring{Proof of \cref{thm.riemannian}}{Proof of main theorem}}

The proof of (\textit{iii}) $\Longrightarrow$ (\textit{ii}) of \cref{thm.riemannian} follows by observing that isometries commute with the Laplace--Beltrami operator, see \cite{Canzani2013}.

The proof of implication (\textit{i}) $\Longrightarrow$ (\textit{iii}) of~\cref{thm.riemannian} relies on pseudo-differential calculus in~\(\Rr^d\). For this, we use charts. We recall that \emph{in fine}, we prove a local characterisation of the vector field~\(u\), namely that for all~\(x_0 \in M\), almost every~\(t_0 \in \Rr\) and all~\((\mu,\nu) \in \{1, \dots , d\}^2\) we have
\[
    \nabla_\mu u_\nu(t_0,x_0) + \nabla_\nu u_\mu(t_0,x_0) = 0.
\]
In view of \cref{lemma:killing}, it is enough to prove that for all~\(x_0 \in M\) and all~\(t_0 \in \Rr\), the map 
\[
    \mathrm{D}X_{t_0+s,t_0}(x_0) : \left(T_{x_0}M,g(x_0)\right) \longrightarrow \left(T_{X_{t_0+s,t_0}(x_0)}M,g(X_{t_0+s,t_0}(x_0))\right)
\]
is an isometry for all $|s|<\delta$, for some $\delta = \delta(t_0,x_0)>0$. 

We fix~\(x_0 \in M\) and~\(t_0 \in \Rr\) for the rest of this section. Consider a local chart
\[
    \kappa : U \subseteq M \rightarrow V
\]
such that~\(U\) is a neighborhood of~\(x_0\). Without loss of generality, we assume that~\(\kappa(x_0) = 0\) and thus~\(V\) is a neighborhood of~\(0\). 

Around~\(x_0\), we only use the chart~\(\kappa\). Therefore, one needs to localize the operators we work with in~\(U\) and more precisely in subsets of~\(U\) that allow to apply~\cref{lem:compo-symbole}. We start with setting up some notation.

There exists~\(r>0\) such that the ball~\(B(0,r)\) of center~\(0\) and radius~\(r\) in~\(\Rr^d\) is included in~\(V\). Set
\[
    V_0 = B\left(0,\frac{3r}{4}\right), \qquad U_0 = \kappa^{-1}(V_0).
\]
Since~\(U_0\) is bounded and~\(\overline{U_0} \subset U\), we deduce that
\[
    s \mapsto \sup_{x\in \overline{U_0}}\{d_g(X_{t_0 + s,t_0}(x), x) + d_g(X_{t_0 + s,t_0}^{-1}(x), x)\}
\]
is continuous (where~\(d_g\) is the geodesic distance). Therefore, there exist~\(\tau > 0\) and $\varepsilon_0>0$ such that for all~\(s \in [-\tau,\tau]\) we have
\[
B_g(x_0,\varepsilon_0) \subset X_{t_0 + s,t_0}(\overline{U_0}) \subseteq U
\]
where~\(B_g(x_0,\varepsilon_0)\) is the geodesic ball of center~\(x_0\) and radius~\(\varepsilon_0\) (remark that by compactness of~\(\overline{U_0}\), there exists~\(\varepsilon >0\) such that for all~\(y \in \overline{U_0}\), we have~\(B_g(y,\varepsilon) \subseteq U\)). We can define
\[
\chi_{t_0 + s,t_0} = \kappa \circ X_{t_0 + s,t_0} \circ \kappa^{-1} : V_0 \rightarrow V.
\]
Remark that~\(\chi_{t_0+s,t_0}\) is differentiable with differential
\[
    \Jac\chi_{t_0+s,t_0} = \Jac\kappa_{|X_{t_0 + s,t_0} \circ \kappa^{-1}} (\Jac X_{t_0 + s,t_0} )_{\kappa^{-1}} \Jac\kappa^{-1}.
\]
Since~\(s\mapsto \displaystyle\sup_{\overline{U_0}}\|\Jac X_{t_0 + s,t_0} - I_d\|\) is continuous and since~\(D\kappa\) and~\(D\kappa^{-1}\) are uniformly continuous, we get that there exists~\(\tau_0 \in (0,\tau]\) such that
\[
    \sup_{\substack{x\in V_0 \\ s\in [-\tau_0,\tau_0]}} \|\Jac\chi_{t_0 + s,t_0}(x) - I\| \leq \frac14.
\]
Set~\(V_1 = \displaystyle\bigcap_{|s|\leq \tau_0}\operatorname{range} \chi_{t_0+s,t_0}\), which is a neighborhood of~\(0\) as it contains a ball $B(0,\varepsilon'_0)$. We have that~\(\chi_{t_0+s,t_0} : V_0 \rightarrow V\) defines a~\(\mathcal C^1\) diffeomorphism on its range~(that includes~\(V_1\)) such that for all~\(s\in [-\tau_0,\tau_0]\) and all~\(x\in V_1\)
\[
    \|\Jac\chi_{t_0 + s,t_0}^{-1} (x) -I_d\| = \|(\Jac\chi_{t_0 + s,t_0} (\chi_{t_0 + s,t_0}^{-1} (x)))^{-1} - I_d\| \leq \frac13.
\]

For the rest of this section, let~\(V_2\) be a convex open subset of~\(V_0 \cap V_1\) that contains~\(0\). We fix~\(s\in [-\tau_0,\tau_0]\).  Let~\(\theta_0,\theta_1,\theta_3\) be smooth functions compactly supported in~\(\kappa^{-1}(V_2)\) such that: \(\theta_0 = 1\) in a neighborhood of~\( X_{t_0+s,t_0}(x_0)\); \(\theta_1 = 1\) on a neighborhood of the support of~\(\theta_0\);~\(\theta_3 = 1\) on a neighborhood of the support of~\(\theta_1\) and set~\(\theta_2 = \theta_3\circ X_{t_0+s,t_0} \).

Note that the time~\(\tau_0\), the open set~\(V_2\) and functions~\(\theta_i\) depend implicitly on~\(x_0\) and~\(t_0\).

Finally, we introduce the following operators, for all~\(i,j \in \{0,1,2,3\}\), acting on a test function~\(f\) of~\(V_2\): 
\begin{align*}
    P^{-\mez}_{i,j} f &= \kappa_* \theta_i P^{-\mez}\theta_j \kappa^* = (\theta_i P^{-1/2} (\theta_j f\circ \kappa))\circ \kappa^{-1}, \\
    P_{i,j} f &= \kappa_* \theta_i P\theta_j \kappa^* =  (\theta_i P (\theta_j f\circ \kappa))\circ \kappa^{-1}.
\end{align*}
We recall that we understand~\(\theta_j f\circ \kappa\) as the (smooth) extension of~\(\theta_j f\circ \kappa\) equal to~\(0\) outside the support of~\(\theta_j\).

Since both $t_0$ and $s$ are now fixed, write $S$, $X$ and $\chi$ in place of $S(t_0+s,t_0)$, $X_{t_0+s,t_0}$ and $\chi_{t_0+s,t_0}$.

\begin{lemm} With the above notations, the operator
\[
    \theta_0 P^{-\mez} \theta_1 \left(S\theta_2 P \theta_2 S^{-1} -  \theta_3 P\theta_3 \right) \theta_1 P^{-1/2} \theta_0
\]
is Hilbert--Schmidt on~\(L^2(M)\). This translates into
\[
    P_{0,1}^{-\mez} \left(S_\kappa P_{2,2} S_\kappa^{-1}  - P_{3,3}\right)P^{-\mez}_{1,0}
\]
is Hilbert--Schmidt on~\(L^2(V_2)\), where~\(S_\kappa\) and~\(S_\kappa^{-1}\) are respectively the compositions with~\(\chi^{-1}\) and~\(\chi\).
\end{lemm}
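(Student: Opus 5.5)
The plan is to derive the statement from \cref{prop:criterion_for_abscont} applied with $\theta=\theta_0$, with $t=t_0+s$ and initial time $t_0$. This gives that
\[
\theta_0 P^{-\mez}\,(SPS^{-1}-P)\,P^{-\mez}\theta_0
\]
is Hilbert--Schmidt on $L^2(M)$. We are in the setting of (\textit{i})$\Rightarrow$(\textit{iii}), so quasi-invariance is assumed. The task is then to insert the cutoffs $\theta_1,\theta_2,\theta_3$ at the cost of Hilbert--Schmidt errors only.

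\textbf{Step 1: inserting $\theta_1$ next to $P^{-\mez}$.} Write $\theta_0P^{-\mez}=\theta_0P^{-\mez}\theta_1+\theta_0P^{-\mez}(1-\theta_1)$. Since $\theta_1=1$ near $\supp\theta_0$, \cref{as} gives that $\theta_0P^{-\mez}(1-\theta_1)$ is Hilbert--Schmidt. It therefore suffices to check that $(SPS^{-1}-P)P^{-\mez}\theta_0$ is bounded on $L^2$. The map $P^{-\mez}\theta_0$ sends $L^2$ to $H^{-\alpha}$, and $P$ sends $H^{-\alpha}$ to $H^{\alpha}$. Since $u\in L^1_{\rm loc}(\Rr,\mathcal C^\alpha)$, the flow $X$ is $\mathcal C^\alpha$, so $S$ and $S^{-1}$ are bounded on $H^{\pm\alpha}$ (the negative index by duality, using \cref{lem:invX}). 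Hence $(SPS^{-1}-P)P^{-\mez}\theta_0$ maps $L^2$ boundedly into $H^\alpha\subset L^2$. The term with $(1-\theta_1)$ on the right is handled symmetrically by taking adjoints. Consequently $\theta_0P^{-\mez}\theta_1(SPS^{-1}-P)\theta_1P^{-\mez}\theta_0$ is Hilbert--Schmidt.

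\textbf{Step 2: inserting $\theta_2$ and $\theta_3$ around $P$.} Use $\theta_1 S f=S\big((\theta_1\circ X)f\big)$. Since $\theta_2=\theta_3\circ X$ and $\theta_3=1$ near $\supp\theta_1$, the function $\theta_2$ equals $1$ near $\supp(\theta_1\circ X)$. Therefore
\[
\theta_1 S(P-\theta_2P\theta_2)S^{-1}\theta_1=S\,(\theta_1\circ X)\big[(1-\theta_2)P+\theta_2P(1-\theta_2)\big](\theta_1\circ X)\,S^{-1}.
\]
By \cref{lemma-pseudodiff-power}~(\textit{iii}), each piece has a smooth, compactly supported kernel, so it maps $H^{-N}$ to $H^{N}$ with compact supports. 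The same holds for $\theta_1(P-\theta_3P\theta_3)\theta_1$. Sandwiching such a smoothing piece $R$ between $\theta_0P^{-\mez}\theta_1$ and $\theta_1P^{-\mez}\theta_0$ gives the following chain. First, $\theta_1P^{-\mez}\theta_0$ maps $L^2$ to $H^{-\alpha}$. Next, $S^{\pm1}$ preserves $H^{\pm\alpha}$, so $S R S^{-1}$ maps $H^{-\alpha}$ into compactly supported $H^{\alpha+1}$ functions, after choosing $N$ large. Finally, $\theta_0P^{-\mez}\theta_1$ is a compactly supported pseudo-differential operator of order $\alpha$ by \cref{lemma-pseudodiff-power}~(\textit{i}). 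The composite therefore maps $L^2$ into $H^{1}$ functions supported in a fixed compact set. Iterating the smoothing, or factoring through $\zeta\langle D\rangle^{-s}$ with $s>d/2$ in charts as in the proof of (\textit{ii}), shows that this operator is Hilbert--Schmidt. Combining with Step 1 yields the first assertion.

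\textbf{Step 3: transport to the chart.} All cutoffs are supported in $\kappa^{-1}(V_2)$. In particular $S\theta_2=(\theta_3\circ X\circ X^{-1})\cdots$, so $S\theta_2 g$ is supported in $\supp\theta_3$. Conjugating by $\kappa_*$ and $\kappa^*$ therefore turns $S$ into $S_\kappa$ and each localized power of $P$ into $P_{i,j}$ or $P^{-\mez}_{i,j}$. Since $\sqrt{\det g}$ is bounded above and below on compact subsets of the chart, $\kappa_*$ and $\kappa^*$ are bounded between the weighted and unweighted $L^2$ spaces. Hence the Hilbert--Schmidt property transfers to $L^2(V_2)$.

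The main obstacle I expect is the regularity bookkeeping for $S$ on $H^{\pm\alpha}$ when $\alpha$ is non-integer and $X$ is only $\mathcal C^\alpha$. I would handle this by interpolating between integer orders and using duality via \cref{lem:invX}. A secondary concern is checking that the smoothing pieces remain Hilbert--Schmidt after composition with the nonlocal operator $\theta_0P^{-\mez}\theta_1$.
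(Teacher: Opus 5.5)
Your proposal follows the paper's proof. You apply \cref{prop:criterion_for_abscont} with $\theta=\theta_0$, insert $\theta_1$ on both sides using \cref{as} together with the $L^2$-boundedness of $(SPS^{-1}-P)P^{-\mez}\theta_0$, bring in $\theta_2,\theta_3$, and finally conjugate by $\kappa$. Steps 1 and 3 are exactly what the paper does. Step 2, however, should be replaced: as written it rests on a regularity argument that does not close.

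The remainder pieces you produce in Step 2 are not merely smoothing; they vanish identically. Since $\theta_2=\theta_3\circ X$ and $\theta_1(1-\theta_3)=0$, you have $(\theta_1\circ X)(1-\theta_2)=\bigl(\theta_1(1-\theta_3)\bigr)\circ X=0$. Hence your bracket $(\theta_1\circ X)\bigl[(1-\theta_2)P+\theta_2P(1-\theta_2)\bigr](\theta_1\circ X)$ is zero. Likewise $\theta_1(P-\theta_3P\theta_3)\theta_1=0$. Equivalently, the identities $\theta_1S=\theta_1S\theta_2$, $S^{-1}\theta_1=\theta_2S^{-1}\theta_1$ and $\theta_1\theta_3=\theta_1$ hold exactly, and this is how the paper concludes.

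This matters because the chain you give for a genuinely nonzero smoothing remainder $R$ fails. $S$ is composition with $X^{-1}$, which is only $\mathcal C^\alpha$. So $SRS^{-1}$ maps into compactly supported functions of regularity roughly $H^\alpha$, however large you take $N$. It does not map into $H^{\alpha+1}$, let alone $H^{\alpha+s}$ with $s>\frac{d}{2}$. That stronger mapping is what you would need to get a Hilbert--Schmidt operator after composing with the order-$\alpha$ operator $\theta_0P^{-\mez}\theta_1$.

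Once you use the exact identities instead, the only regularity input left is the boundedness of $S^{\pm1}$ on $H^{\pm\alpha}$ in Step 1. This is the same input the paper uses.
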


\begin{proof} Our starting point is \cref{prop:criterion_for_abscont}, from which we know that 
\[
    \theta_0 P^{-\mez}\left(SPS^{-1} - P \right) P^{-1/2} \theta_0 \in \mathfrak{S}_2(L^2(M)).
\]

Write $\theta_0P^{-\mez} = \theta_0P^{-\mez}\theta_1 + \theta_0P^{-\mez}(1-\theta_1)$. By \cref{as} and because $\theta_1 = 1$ on a neighborhood of $\supp \theta_0$, it follows that $\theta_0 P^{-\mez}(1-\theta_1)$ is a Hilbert--Schmidt operator. 
Moreover, the operator $(SPS^{-1}-P)P^{-\mez}\theta_0$ is continuous as an operator $L^2(M) \to L^2(M)$. It follows that 
\[
    \theta_0 P^{-\mez}(1-\theta_1)(SPS^{-1}-P)P^{-\mez}\theta_0 \in \mathfrak{S}_2(L^2(M)), 
\]
and therefore that 
\[
    \theta_0 P^{-\mez}\theta_1(SPS^{-1}-P)P^{-\mez}\theta_0 \in \mathfrak{S}_2(L^2(M)).
\]
Repeating this analysis on $P^{-\mez}\theta_0$ yields that 
\[
    \theta_0 P^{-\mez}\theta_1(SPS^{-1}-\theta_3 P\theta_3)\theta_1P^{-\mez}\theta_0 \in \mathfrak{S}_2(L^2(M)).
\]
In the above we have also used that $\theta_3\theta_1=\theta_1$. Observe that for all test functions $f$, since~\(\theta_1 \theta_2 \circ X^{-1} = \theta_1\), we have
\[
    \theta_1 S f  = \theta_1 f \circ X^{-1} = \theta_1 \theta_2 \circ X^{-1} f \circ X^{-1} = \theta_1 S \theta_2f,
\]
which means $\theta_1 S = \theta_1 S \theta_2$ and allows one to conclude that 
\[
    \theta_0 P^{-\mez} \theta_1 \left(S\theta_2 P \theta_2 S^{-1} -  \theta_3 P\theta_3 \right) \theta_1 P^{-1/2} \theta_0 \in \mathfrak{S}_2(L^2(M)).
\]
Since composition by $\kappa$ or $\kappa^{-1}$ is a continuous operation between $L^2(\kappa^{-1}(V_2))$ and $L^2(V_2)$ (recall that~\(\theta_0\) is supported in~\(\kappa^{-1}(V_2)\)), it follows that 
\[
   \kappa_*\theta_0 P^{-\mez} \theta_1 \left(S\theta_2 P \theta_2 S^{-1} -  \theta_3 P\theta_3 \right) \theta_1 P^{-1/2} \theta_0\kappa^* \in \mathfrak{S}_2(L^2(V_2)). 
\]
It remains to observe that
\[
    P_{0,1}^{-\mez} \left(S_\kappa P_{2,2} S_\kappa^{-1}  - P_{3,3}\right)P^{-\mez}_{1,0} = \kappa_*\theta_0 P^{-\mez} \theta_1 \left(S\theta_2 P \theta_2 S^{-1} -  \theta_3 P\theta_3 \right) \theta_1 P^{-1/2} \theta_0\kappa^*. \qedhere 
\]
\end{proof}

Our next step is to identify the leading order operator of $P_{0,1}^{-\mez} \left(S_\kappa P_{2,2} S_\kappa^{-1}  - P_{3,3}\right)P^{-\mez}_{1,0}$ as a pseudo-differential operator.

\begin{lemm}\label{lemma:main-symbol}
Let \(q_0  \in S^0\) be defined for all~\(y\in V_2\) and~\(\eta \in \Rr^d\) by 
\begin{equation}
    \label{eq.principal-symbol}
    q_0(y,\eta) = \omega(\eta)\theta_0^2(\kappa^{-1}(y))\left( \frac{|\eta|^{2\alpha}_{y}}{|\Jac \chi(\chi^{-1}(y))^\top\eta|_{\chi^{-1}(y)}^{2\alpha}}-1\right), 
\end{equation}
where $|\eta|^2_{z}:=g^{ij}(\kappa^{-1}(z))\eta_i\eta_j$, and with $\omega$ which is a radial function such that $\omega = 0$ near $\eta =0$ and $\omega (\eta) = 1$ for $|\eta| \geq C$ for some constant $C>0$. 
There exists~\(\delta > 0\) and~\(r \in S^{-\delta}\) such that
\[
    P_{0,1}^{-\mez} (S_\kappa P_{2,2} S_\kappa^{-1}  - P_{3,3})P^{-\mez}_{1,0} = \Op(q_0) + \Op(r).
\]
\end{lemm}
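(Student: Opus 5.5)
We treat each factor as a pseudo-differential operator in the chart and compose symbols, tracking only the principal parts. By \cref{lemma-pseudodiff-power}~(\textit{i}) with $\beta=-\frac{\alpha}{2}$ is replaced by $\beta = \frac{\alpha}{2}$ (since $P^{-\mez}=L^{\alpha/2}$), the operators $P^{-\mez}_{0,1}$ and $P^{-\mez}_{1,0}$ are classical of order $\alpha$ with principal symbols $\theta_0\theta_1(\kappa^{-1}(y))|\eta|_y^{\alpha} = \theta_0(\kappa^{-1}(y))|\eta|_y^{\alpha}$. Likewise $P_{3,3}$ and $P_{2,2}$ are classical of order $-2\alpha$, with principal symbols $\theta_3^2(\kappa^{-1}(y))|\eta|_y^{-2\alpha}$ and $\theta_2^2(\kappa^{-1}(y))|\eta|_y^{-2\alpha}$. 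For the conjugated term we apply \cref{lem:compo-symbole} to $p$ the symbol of $P_{2,2}$, with the diffeomorphism $\chi$ (its hypothesis $\|\Jac\chi^{-1}-I_d\|\le \frac13<1$ on the convex set $V_2$ was arranged before the statement), $\rho$ a cutoff equal to $1$ near $\chi^{-1}$ of the relevant support, and $\psi$ a cutoff equal to $1$ near $\supp\theta_1\circ\kappa^{-1}$. Since $P_{0,1}^{-\mez}$ already contains $\theta_1$ on the right, the outer cutoffs can be inserted for free. The principal term of \eqref{eq.asymptotic-expansion} (the $\alpha=0$ term at $y=x$, where $J(x,x)=\Jac\chi^{-1}(x)^\top$ and the determinant factor is $1$) gives the symbol $\theta_2^2(\kappa^{-1}\chi^{-1}(y))\,|\Jac\chi^{-1}(y)^{-\top}\eta|^{-2\alpha}_{\chi^{-1}(y)}$. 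Noting $\Jac\chi^{-1}(y)^{-\top}=\Jac\chi(\chi^{-1}(y))^\top$ and that $\theta_2\circ\kappa^{-1}\circ\chi^{-1}=\theta_3\circ\kappa^{-1}$, which equals $1$ on $\supp\theta_1$, this becomes $|\Jac\chi(\chi^{-1}(y))^\top\eta|^{-2\alpha}_{\chi^{-1}(y)}$ there.

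Next we compose using \cref{prop.compo.pseudo}. The middle operator $S_\kappa P_{2,2}S_\kappa^{-1}-P_{3,3}$, localized near $\supp\theta_1$, has principal symbol
\[
|\Jac\chi(\chi^{-1}(y))^\top\eta|^{-2\alpha}_{\chi^{-1}(y)}-|\eta|_y^{-2\alpha},
\]
which is of order $-2\alpha$. Sandwiching it between the two order-$\alpha$ factors, the leading term of the composition is the product of principal symbols,
\[
\theta_0^2(\kappa^{-1}(y))\left(|\eta|_y^{2\alpha}\,|\Jac\chi(\chi^{-1}(y))^\top\eta|_{\chi^{-1}(y)}^{-2\alpha}-1\right),
\]
multiplied by $\omega$ to remove the singularity at $\eta=0$. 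This is exactly $q_0$, an $S^0$ symbol, homogeneous of degree $0$ for large $|\eta|$. All lower-order terms in the asymptotic expansions, the smoothing remainders, and the classical corrections $\omega p_{m-j}$ with $j\geq 1$ have order $\le -1$. The cutoff error terms are smoothing, by disjointness of supports together with \cref{lemma-pseudodiff-power}~(\textit{iii}).

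The main obstacle is regularity. The flow $X$, and hence $\chi$, is only $\mathcal C^{\alpha}$ in space, not smooth, so \cref{lem:compo-symbole} and \cref{prop.compo.pseudo} do not apply verbatim. To handle this, we would invoke the limited-regularity remarks following those two statements. Since $\alpha>\frac d2\ge1$, we can write $\chi\in C^{1+2\delta}_*$ for some $\delta\in(0,1)$. The Egorov remark then gives the conjugated operator as the principal term modulo $\Op(C^\delta_*S^{-2\alpha-\delta})$. The composition remark handles products with the smooth outer factors (taking $\sigma>1$ on the side carrying the rough coefficient, or arranging the order of composition so that the smooth symbol is differentiated), with errors in $C^{\tau}_*S^{-\delta}$.

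One must then check that the rough remainder $r$ lies in an acceptable symbol class. Strictly, $r$ belongs to a limited-regularity class $C^{\delta}_*S^{-\delta}$ rather than $S^{-\delta}$, and we would interpret the statement that way. The point is that this is enough for the subsequent Hilbert--Schmidt argument, since $\Op(r)$ is of negative order and compactly localized. Finally we would take $\delta$ as the minimum of the gains obtained in the two steps.
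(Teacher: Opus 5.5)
Your proposal is correct and follows the paper's proof essentially step for step. The paper also takes principal symbols from \cref{lemma-pseudodiff-power}~(\textit{i}), applies the Egorov-type \cref{lem:compo-symbole} with $\psi=1$ near $\supp\theta_1\circ\kappa^{-1}$ and $\rho=1$ near $\chi^{-1}(\supp\psi)$ (using $\theta_2\circ\kappa^{-1}\circ\chi^{-1}=\theta_3\circ\kappa^{-1}=1$ there), and then uses \cref{prop.compo.pseudo} to read off $q_0$ as the product of principal symbols.

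Your treatment of the limited regularity of $\chi$ is more explicit than the paper's, which simply asserts a remainder in $S^{-\delta}$. One correction to your parenthetical: the composition remark needs the \emph{right-hand} symbol in $C^\sigma_*$ with $\sigma>1$. In $\Op(c)\circ\Op(\cdot)$, a symbol built from $\Jac\chi\in\mathcal C^{\alpha-1}$ (when $\alpha\le 2$), or the $C^\delta_*$ Egorov remainder, necessarily sits on the right and cannot be ``moved to the left''. The paper's proof passes over this technical point in the same way.
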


\begin{proof} In view of \cref{lemma-pseudodiff-power} (\textit{i}) we can write $P_{2,2} = \Op(a_2)$ for some $a_2 \in S^{-2\alpha}_{\rm cl}$ whose principal symbol is $p_{2,2}(y,\eta)=\theta_2^2\circ\kappa^{-1}(y)|\eta|_y^{-2\alpha}$. 

The restriction of~\(\chi^{-1}\) to~\(V_2\) defines a~\(\mathcal C^1\) diffeomorphism on its range~\(W\). We consider the restriction of~\(\chi\) to~\(W\). Take $\tilde{\theta}_1$ supported in $V_2$ such that $\tilde{\theta}_1 = 1$ on $\supp \theta_1 \circ \kappa^{-1}$. Remark that~\(\theta_2 \circ \kappa^{-1}\) is supported in~\(\chi^{-1}(V_2) = W\) and take~\(\tilde{\theta}_2\) supported in~\(W\) such that~\(\tilde \theta_2 = 1\) on a neighborhood of~\(\supp \theta_2\circ \kappa^{-1} \cup \chi^{-1}(\supp \tilde \theta_1)\). Apply \cref{lem:compo-symbole} with $(\psi,\rho)=(\tilde{\theta}_1,\tilde \theta_2)$ yields 
\[
    \tilde{\theta}_1S_\kappa P_{2,2} S_\kappa^{-1}\tilde{\theta}_1 = \tilde{\theta}_1 C_{\chi}^{-1}\tilde{\theta}_2 P_{2,2} \tilde{\theta}_2 C_{\chi}\tilde{\theta}_1 = \tilde{\theta}_1\Op(b)\tilde{\theta}_1 + \Op(r_1), 
\]
where $b(y,\eta)=\omega_1(\Jac \chi (\chi^{-1}(y))^\top\eta)p_{2,2}\left(\chi^{-1}(y),\Jac \chi (\chi^{-1}(y))^\top\eta\right)$ for all $(y,\eta) \in V_2\times\Rr^d$ and for some radial $\omega_1$ which equals one for $|\eta| \geq C$, $C>0$ large enough; and with $r_1 \in S^{-2\alpha - \delta}$ for some $\delta >0$. Note that in the case of smooth $\chi$ we can take $\delta =1$. Applying \cref{lemma-pseudodiff-power} (\textit{i}) again yields
\[
     \tilde{\theta}_1\left(S_\kappa P_{2,2} S_\kappa^{-1} - P_{3,3}\right)\tilde{\theta}_1 = \tilde{\theta}_1\Op(b - a_3)\tilde{\theta}_1 + \Op(r_2), 
\]
with $a_3 \in S^{-2\alpha}_{\rm cl}$ whose principal symbol is given by $p_{3,3}(y,\eta)=\omega_1(\eta)\theta_3^2\circ\kappa^{-1}(y)|\eta|_y^{-2\alpha}$; and where $r_2 \in S^{-2\alpha -\delta}$. Another application of \cref{lemma-pseudodiff-power} yields that $P^{-\mez}_{0,1}=\Op(c_L)$, $P^{-\mez}_{1,0}=\Op(c_R)$ with $c_L, c_R \in S^{\alpha}_{\rm cl}$ with principal symbol both given by $p^{-\mez}_{0,1}(y,\eta)=\omega_1(\eta)\theta_0(\kappa^{-1}(y))\theta_1(\kappa^{-1}(y))|\eta|_y^{\alpha}$. Write 
\begin{align*}
    P_{0,1}^{-\mez}(S_\kappa P_{2,2} S_\kappa^{-1}  - P_{3,3})P^{-\mez}_{1,0} &= P_{0,1}^{-\mez}\tilde{\theta}_1(S_\kappa P_{2,2} S_\kappa^{-1}  - P_{3,3})\tilde{\theta}_1P^{-\mez}_{1,0} \\ 
    &= \Op(c)\left( \Op(b - a_{3}) + \Op(r_2)\right)\Op(c).
\end{align*}
First, since $c \in S^{\alpha}$ and $r_2\in S^{-2\alpha -\delta}$ it follows from \cref{prop.compo.pseudo} that
\[
\Op(c)\Op(r_2)\Op(c) \in \Op(S^{-\delta}).
\]
Applying \cref{prop.compo.pseudo} again yields 
\[
    \Op(c)\Op(b - a_{3})\Op(c) = \Op\left(p^{-\mez}_{0,1}(b-p_{3,3})p^{-\mez}_{0,1}\right) + \Op(r_3) = \Op(q_0) + \Op(r_3)
\]
for some $r_3 \in S^{-\delta}$, and where $q_0=p^{-\mez}_{0,1}(b-p_{3,3})p^{-\mez}_{0,1} \in S^0$. It remains to observe that for $|\eta| \geq C$ large enough there holds $\omega_1(\eta)=\omega_1(\Jac \chi (\chi^{-1}(y))^\top\eta)=1$ and for $y \in V_2$, due to the definitions of the functions $\theta_j$, we obtain the claimed formula for $q_0(y,\eta)$.
\end{proof}

Our next step is to analyse the Hilbert--Schmidt condition at the level of the main operator that we have identified.

\begin{lemm}\label{lemma:contradiction}
Let~\(q_0 \in S^0\) such that for all~\(\eta \in \Rr^d\) satisfying~\(|\eta| \geq C_0\) and all~\(y \in V_2\), we have
\[
    q_0 (y,\eta) = q_0\left(y,\frac{\eta}{|\eta|}\right),
\]
and let \(r \in S^{-\delta}\) for some $\delta >0$. Assume that
\[
    \Op (q_0) + \Op(r) \in \mathfrak{S}_2(L^2(V_2)).
\]
Then~\(q_0(y,\eta) = 0\) for all $y \in V_2$ and all $|\eta| \geq 1$.
\end{lemm}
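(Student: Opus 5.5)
Assume by contradiction that $q_0(y_1,\eta_1/|\eta_1|)\neq 0$ for some $y_1\in V_2$ and some $|\eta_1|\geq 1$; by homogeneity for large $|\eta|$ (taking $C_0\geq 1$ without loss of generality) we may phrase everything in terms of the direction $\sigma_1=\eta_1/|\eta_1|\in\mathbb S^{d-1}$. Since $q_0$ is smooth (or at least continuous), there exist $\varepsilon>0$, a small ball $B\subset V_2$ around $y_1$ and an open cone $\Gamma_1\subset\Rr^d$ around $\sigma_1$ such that $|q_0(y,\eta)|\geq c>0$ for $y\in B$ and $\eta\in\Gamma_1$, $|\eta|\geq C_0$. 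The plan is to test the Hilbert--Schmidt operator against a family of wave packets concentrated at $(y_1, \lambda\sigma)$, or more robustly to compute a localized Hilbert--Schmidt norm which must diverge.

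Concretely, take $\varphi\in\mathcal C_c^\infty(B)$ nonzero and a symbol $\psi(\eta)$ smooth, homogeneous of degree $0$ for large $|\eta|$, supported in $\Gamma_1$, and consider the operator $T=\varphi(\Op(q_0)+\Op(r))\Op(\psi)_N$, where $\Op(\psi)_N$ additionally restricts frequencies to the dyadic shell $2^N\leq|\eta|\leq 2^{N+1}$ (a Fourier multiplier $\psi_N(\eta)=\psi(\eta)\beta(2^{-N}\eta)$). Since $\varphi$ is bounded and $\Op(\psi_N)$ has norm $\leq C$ uniformly, $\|T\|_{\mathfrak S_2}\leq C\|\Op(q_0)+\Op(r)\|_{\mathfrak S_2}$ only after summing orthogonal shells properly; so instead I use the exact formula for the Hilbert--Schmidt norm of a Kohn--Nirenberg operator, $\|\varphi\Op(a)\|_{\mathfrak S_2}^2=(2\pi)^{-d}\iint|\varphi(y)|^2|a(y,\eta)|^2\,\mathrm dy\,\mathrm d\eta$ (Plancherel in the kernel $K(x,y)$), applied to $a=q_0\psi+r\#\psi$-type symbols. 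Precisely: $\varphi(\Op(q_0)+\Op(r))\Op(\psi)=\varphi\Op(q_0\psi+r\psi)$ exactly, by the remark after \cref{prop.compo.pseudo} (right composition by a Fourier multiplier), and it is Hilbert--Schmidt since $\Op(\psi)$ is bounded. Hence
\[
\iint |\varphi(y)|^2|q_0(y,\eta)\psi(\eta)+r(y,\eta)\psi(\eta)|^2\,\mathrm dy\,\mathrm d\eta<\infty .
\]

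Now for $|\eta|$ large, $|r(y,\eta)|\leq C\langle\eta\rangle^{-\delta}\leq c/2$, so on $\operatorname{supp}\varphi\times(\Gamma_1\cap\{|\eta|\geq R\})$, with $\psi=1$ on a slightly smaller cone, the integrand is bounded below by $|\varphi(y)|^2c^2/4$. The region $\Gamma_1\cap\{|\eta|\geq R\}$ has infinite Lebesgue measure, so the integral diverges, a contradiction. Thus $q_0(y,\eta)=0$ for $y\in V_2$, $|\eta|\geq C_0$, and by the assumed homogeneity (values for $|\eta|\geq1$ are determined by the unit sphere) for all $|\eta|\geq 1$.

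The main point to check is the kernel/Plancherel identity for $\varphi\Op(a)$ with $a$ only a symbol in $S^0$ (the kernel is a distribution a priori); I would justify it by truncating $a$ in $\eta$ with $\mathbf 1_{|\eta|\leq R'}$, for which the operator is genuinely Hilbert--Schmidt with the explicit norm, and note these truncations are $\varphi\Op(a)$ composed with the bounded multiplier $\mathbf 1_{|\eta|\le R'}(D)$, whose $\mathfrak S_2$ norms are bounded by that of $\varphi\Op(a)$; then let $R'\to\infty$ by monotone convergence. The minor care needed concerns the continuity of $q_0$ in $y$ and the meaning of $\Op(q_0)$ on $L^2(V_2)$ versus $L^2(\Rr^d)$, handled by the cutoff $\varphi$ supported inside $V_2$.
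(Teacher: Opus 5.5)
Your proof is correct in substance and uses the same mechanism as the paper. Three facts drive it: the Hilbert--Schmidt norm of a Kohn--Nirenberg operator is $(2\pi)^{-d/2}$ times the $L^2(\mathrm{d}y\,\mathrm{d}\eta)$ norm of its symbol; a non-vanishing symbol that is $0$-homogeneous for large $|\eta|$ is not square-integrable in $\eta$; and an $S^{-\delta}$ remainder cannot compensate. The organisation differs. The paper cuts frequency space into dyadic shells and proves $\|Q_N\|_{\mathfrak S_2}^2\gtrsim N^d$ and $\|R_N\|_{\mathfrak S_2}^2\lesssim N^{d-2\delta}$. It then sums via a ``quasi-orthogonality'' lower bound $\|T\|_{\mathfrak S_2}^2\gtrsim\sum_N\|Q_N+R_N\|_{\mathfrak S_2}^2$, which it asserts rather than proves. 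You instead absorb $r$ pointwise: $|r|\le c/2$ for $|\eta|$ large, so $|q_0+r|\ge c/2$ on $B$ times an infinite-measure truncated cone. You then derive $\iint|\varphi|^2|q_0+r|^2<\infty$ rigorously from the exact identity $\varphi T\mathbf 1_{|D|\le R'}=\varphi\Op((q_0+r)\mathbf 1_{|\eta|\le R'})$ and monotone convergence. This truncation is exactly what the paper's summation step needs, so your version is shorter and more complete. The cone multiplier $\Op(\psi)$ is superfluous.

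One step needs an extra argument; the paper glosses over the same point. To apply the truncation you need $\varphi T$ to be Hilbert--Schmidt on $L^2(\Rr^d)$, because $\mathbf 1_{|D|\le R'}$ (or $\Op(\psi)$) does not preserve support in $V_2$. The hypothesis only controls the compression $\mathbf 1_{V_2}T\mathbf 1_{V_2}$, and the left cutoff $\varphi$ alone does not settle this. The fix:
\begin{itemize}
\item Take $\operatorname{supp}\varphi$ compact in $V_2$ and split $\varphi T=\varphi\mathbf 1_{V_2}T\mathbf 1_{V_2}+\varphi T\mathbf 1_{\Rr^d\setminus V_2}$.
\item The kernel of the second piece is supported in $\{|x-y|\ge c>0\}$, with $x$ in a compact set.
\item Integrating by parts in $\eta$, as in the Euclidean case of \cref{lemma-pseudodiff-power}~(\textit{iv}), bounds it by $O(|x-y|^{-M})$ for any $M>d$, so it is square-integrable.
\end{itemize}
In the paper's application the operator carries the cutoff $\theta_0\circ\kappa^{-1}$ on both sides, so there this is automatic.

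Finally, your last sentence, that values for $|\eta|\ge 1$ are determined by the unit sphere, requires $C_0\le 1$. What you and the paper actually prove is $q_0=0$ on $V_2\times\{|\eta|\ge C_0\}$, equivalently on $V_2\times\mathbb S^{d-1}$, and that is all that is used afterwards. For $C_0>1$ the literal conclusion can fail: take $r=0$ and $q_0=\phi(|\eta|)$ with $\phi\in\mathcal C_c^\infty((1,C_0))$ non-zero.
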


\begin{proof}
Consider a partition of unity $1 = \sum_{N \in 2^{\mathbb{N}}} \psi_N(\eta)$ where for $N>1$, $\psi_N$ is radial, supported in $B(0,4N)\setminus B(0,\frac{N}{2})$ and equals $1$ on $B(0,2N) \setminus B(0,N)$. Let $Q_N=\Op(q_0(y,\eta) \psi_N(\eta))$ and $R_N=\Op(r\psi_N(\eta))$ so that
\[
    T:= \Op (q_0) + \Op(r) = \sum_{N \in 2^{\mathbb{N}}} Q_N + R_N.
\]
Since the Schwartz kernel of a pseudo-differential operator $A=\Op(a)$ is given by $K_A(x,y)=\check{a}(x,x-y)$, the inverse Fourier transform being taken in the second variable only, it follows that we can compute 
\[
    \|R_N\|^2_{\mathfrak{S}_2(L^2(V_2))} \leq \iint_{V_2 \times \Rr^d}|\psi_N(\eta)|^2|r(y,\eta)|^2 \mathrm{d}y\mathrm{d}\eta \lesssim |V_2| \int_{\frac{N}{2} \leq |\eta| \leq 4N}  \frac{\mathrm{d}\eta}{\langle \eta\rangle^{2\delta}} \lesssim N^{d-2\delta}.
\]
where we have used that $r\in S^{-\delta}$. 

Assume that there exists $(y_0,\eta_0)$ with $|\eta _0| \geq C_0$ such that $q_0(y_0,\eta_0)\neq 0$. By continuity of $(y,\eta)\mapsto q_0(y,\eta)$ and homogeneity in $\eta$ there exists $\delta >0$ such that we can bound for any $N \gg 1$,  
\[
    |q_0(y,\eta)| \geq c >0, \qquad y \in \overline{B(y_0,\delta)}, \eta \in \overline{B}(N\eta_0,N\delta).
\]
Let $C_N = \overline{B}(N\eta_0,N\delta) \cap \left(B(0,2N)\setminus B(0,N)\right)$ and observe that $|C_N| \gtrsim N^d$. Therefore 
\[
    \|Q_N\|_{\mathfrak{S}_2(L^2(V_2))}^2 \gtrsim \int_{B(y_0,\delta)\times C_N} |q_0(y,\eta)|^2\mathrm{d}y\mathrm{d}\eta \gtrsim N^d.
\]

It remains to observe that by quasi-orthogonality of the family $\{Q_N+R_N\}$ we have 
\[
    \|T\|_{\mathfrak{S}_2(L^2(V_2))}^2 \gtrsim \sum_{N \in 2^{\mathbb{N}}} \|Q_N + R_N\|_{\mathfrak{S}_2(L^2(V_2))}^2 \gtrsim \sum_{N \in 2^{\mathbb{N}}}N^d-N^{d-2\delta} = + \infty,
\]
which is a contradiction. Therefore $q_0=0$.
\end{proof}

It follows from \cref{lemma:contradiction} and \cref{lemma:main-symbol} that for $y$ such that \(\theta_0\circ\kappa^{-1}(y)=1\), and all $\eta \in \Rr^d$ there holds
\[
    |\eta|_{y}^2 = |\Jac \chi(\chi^{-1}(y))^\top\eta|_{\chi^{-1}(y)}^2. 
\]
After evaluation at $y=\chi(0)$ this is nothing but the dual statement of the fact that 
\[
    \Jac X_{t_0 + s,t_0}(x_0) : \left(T_{x_0}M, g(x_0)\right) \longrightarrow \left(T_{X(x_0)}M,g(X(x_0))\right)
\]
is an isometry. Combined with~\cref{app:Killing}, this ends the proof.

\subsection{Proof of Corollaries~\ref{thm.euclidean} and~\ref{thm.main}}

\begin{proof}[Proof of~\cref{thm.euclidean}]
Assume~(iii). We have~\(X_t^{-1}(x) = b(t) + R(t)x\) with~\(R(t) \in O_d(\Rr)\). With the notation of~\cref{subsec:Result}, we get (since the real-valued character is preserved by~\(S(t)\)):
\[
    S(t) \Phi_g^\alpha = \mathrm{Re}\int (1+|\xi|^2)^{-\frac{\alpha}{2}} e^{i (b(t) + R(t)x)\cdot \xi} \mathrm{d}W(\xi).
\]
Process~\(e^{ib(t)\cdot \xi} \mathrm{d}W(\xi)\) has the same law as~\(\mathrm{d}W(\xi)\). Indeed, for all test functions~\(f,g\), we have
\begin{align*}
   \E \left( \overline{\int f(\xi) e^{i (b(t))\cdot \xi} \mathrm{d}W(\xi) } \int g(\eta) e^{i (b(t))\cdot \eta} \mathrm{d}W(\eta)\right) &= \int \overline{f(\xi)} g(\eta) e^{ib(t)\cdot (\eta - \xi)} \delta (\xi - \eta) \mathrm{d}\xi \mathrm{d}\eta \\
   &= \int \overline{f(\xi)} g(\xi) \mathrm{d}\xi. 
\end{align*}
Therefore,
\[
    S(t) \Phi_g \sim_{\mathrm{law}} \mathrm{Re}\int (1+|\xi|^2)^{-\frac{\alpha}{2}} e^{i  (R(t)x)\cdot \xi} \mathrm{d}W(\xi) =  \mathrm{Re}\int (1+|\xi|^2)^{-\frac{\alpha}{2}} e^{i  x\cdot (R(t)^* \xi)} \mathrm{d}W(\xi) .
\]
Wiener processes are invariant under the action of~\(O_d(\Rr)\): indeed, for all test functions~\(f,g\), we have
\[
    \E \left(\overline{\int f\circ R(\xi) \mathrm{d}W(\xi) } \int g\circ R(\eta)  \mathrm{d}W(\eta) \right) = \int \overline{f\circ R(\xi)} g\circ R(\xi)\mathrm{d}\xi = \int \overline{f(\xi)} g(\xi) \mathrm{d}\xi.
\]
We get 
\[
    S(t) \Phi_g \sim_{\mathrm{law}}  \mathrm{Re}\int (1+|R(t)\xi|^2)^{-\frac{\alpha}{2}} e^{i  x\cdot\xi} \mathrm{d}W(\xi) = \mathrm{Re}\int (1+|\xi|^2)^{-\frac{\alpha}{2}} e^{i  x\cdot \xi} \mathrm{d}W(\xi).
\]
Therefore, we have indeed that~(iii) implies~(ii), which in turn implies~(i). 

Assume~(i). We get that~\(u_t\) is a Killing field, which ensures that~\(X_t\) is a global isometry. In other words, for all~\(x,y\in \Rr^d\), we have that
\[
    |X_t(x) - X_t(y)|^2 = |x-y|^2.
\]
We deduce for all~\((x_0,y_0)\in \Rr^{2d}\), all~\((z,v)\in \Rr^2\) and all~\((\xi, \eta) \in \Rr^{2d}\) that
\[
    |X_t(x_0 + z\xi) - X_t(y_0 + v\eta)|^2 = |x_0-y_0+z\xi-v\eta|^2.
\]
We differentiate with respect to~\(z\) and~\(v\) and take the value at~\((z,v) = (0,0)\) to get
\[
    \an{\Jac X_t(x_0) \xi , \Jac X_t(y_0) \eta}=\an{\xi,\eta}. 
\]
Therefore, from this and~\(\Jac X_t(x_0) \in O_d(\Rr)\), we get
\[
    \Jac X_t(x_0) = \Jac X_t(y_0) =: R(t)^*
\]
with~\(R(t) \in O_d(\Rr)\) and thus
\[
X_t^{-1} (x) = R(t) x+ b(t).
\] 
We use~\cref{rema-isom} to conclude.
\end{proof}

\begin{proof}[Proof of~\cref{thm.main}]

The implications \textit{(iii)} $\Longrightarrow$ \textit{(ii)} $\Longrightarrow$ \textit{(i)} are immediate in~\cref{thm.main}. Indeed, if $b(t)=\int_0^tc(\tau)\mathrm{d}\tau$ then the solution to \eqref{eq:transport} for $u(t,x)=c(t)$ with initial data 
\[
    \Phi_g(x)=\sum_{n\in \mathbb{Z}^2_*}a_n\tilde g_n e^{in\cdot x}
\]
is given by 
\[
    S(t)\Phi_g(x)=\sum_{n\in\mathbb{Z}_*^2}a_n\tilde g_n e^{in\cdot (x-b(t))} = \sum_{n\in\mathbb{Z}_*^2}a_n \tilde g_n(t)e^{in\cdot x},
\]
where $\tilde g_n(t)=\tilde g_ne^{-in\cdot b(t)}$. The law of the family~\((\tilde g_n(t))_n\) is the same as the law of the~\((\tilde g_n)_n\): indeed, Gaussian measures are invariant under the action of~\(U(1)\) and
\[
    \E(\tilde g_{-m}(t) \tilde g_n(t)) = \E(\overline{\tilde g_m(t)} \tilde g_n(t)) = e^{ib(t)\cdot(m-n)}\E(\overline{\tilde g_m} \tilde g_n) = \delta_n^m.
\]
It follows that $S(t)_{*}\mu_\alpha=\mu_\alpha$, i.e. we have the invariance of the measure under the flows of \eqref{eq:transport}.

Assume~(i). As for the Euclidean case, we get
\[
    X_t(x) = R(t) x + b(t).
\]
with~\(R(t) \in O_d(\Rr^d)\). But~\(X_t\) must preserve the torus, meaning that~\(R(t) = P_\sigma D\) where~\(P_\sigma\) is the matrix of a permutation and~\(D\) is a diagonal matrix with~\(\pm 1\) on the diagonal. Since~\(X_t\) is continuous, we get that~\(R(t) = R(0) = I_d\) and thus that~\(u(t,\cdot) = c(t)\) with~\(c(t) = \dot b(t)\).
\end{proof}

\appendix

\section{Killing vector fields}\label{app:Killing}

\begin{lemm}\label{lemma:killing}
Let $(M,g)$ be a smooth boundaryless complete Riemannian manifold. Assume that the flow $X_{t,t_0}$ of a vector field $u \in L^1_{\rm loc}(\mathbb{R},\mathcal{C}^{1,\varepsilon}(M))$, as defined in \eqref{eq:defX}, satisfies the following property: for all $t_0 \in \Rr$ and all $x \in M$ there exists $\delta=\delta(t_0,x)>0$ such that for all $t\in \Rr $ satisfying $|t-t_0|\leq \delta$, 
\[
    \Jac X_{t,t_0}(x) : (T_xM,g(x)) \longrightarrow (T_{X_{t,t_0}(x)}M,g(X_{t,t_0}(x)))
\]
is an isometry. 
Then, for almost every $t$, the vector field $u(t)$ is a Killing field, namely: 
\[
    	\nabla_i u_j(t)+\nabla_j u_i(t) = 0.
\]
Conversely, if for almost all~\(t\), we have 
\[
    	\nabla_i u_j(t)+\nabla_j u_i(t) = 0,
\]
then for all~\(t\in \Rr\) and all~\(x\in M\), we have that~\(\Jac X_t (x)\) is an isometry.
\end{lemm}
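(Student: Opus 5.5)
The plan is to work in local coordinates and track the evolution of the pulled-back metric along the flow. Fix $t_0$ and $x\in M$, and define
\[
G(t) := \Jac X_{t,t_0}(x)^\top\, g(X_{t,t_0}(x))\, \Jac X_{t,t_0}(x),
\]
which is the matrix of the pullback metric $(X_{t,t_0}^*g)(x)$ in a chart around $x$. For $t$ near $t_0$ the point $X_{t,t_0}(x)$ stays in a chart around $x$. The map $\Jac X_{t,t_0}(x)$ is an isometry exactly when $G(t) = g(x)$.

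The first step is to differentiate $G$ in $t$. We use the variational equation $\partial_t \Jac X_{t,t_0} = \Jac u(t,X_{t,t_0})\,\Jac X_{t,t_0}$, valid for almost every $t$ since $u\in L^1_{\rm loc}(\mathcal C^{1,\varepsilon})$; this makes $G$ absolutely continuous in $t$. We also use $\partial_t [g(X_{t,t_0}(x))] = u^k\partial_k g$ evaluated along the trajectory. With $y=X_{t,t_0}(x)$ and $A=\Jac X_{t,t_0}(x)$, this gives
\[
\partial_t G(t) = A^\top\big(\Jac u^\top g + g\,\Jac u + u^k\partial_k g\big)(t,y)\,A = A^\top (\mathcal L_{u(t)}g)(y)\,A .
\]
The bracket is the coordinate expression of the Lie derivative. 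A routine Christoffel computation identifies it as
\[
(\mathcal L_u g)_{ij} = \partial_i u^k g_{kj} + \partial_j u^k g_{ik} + u^k\partial_k g_{ij} = \nabla_i u_j + \nabla_j u_i .
\]
This identification is the one genuinely computational step.

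For the direct implication, assume $G(t)=g(x)$ for $|t-t_0|\le\delta$. Then $\partial_t G=0$ for a.e. such $t$. Since $A$ is invertible, $(\mathcal L_{u(t)}g)(X_{t,t_0}(x))=0$ for a.e. $t$ near $t_0$. The main obstacle is turning this pointwise-along-trajectories statement into ``for a.e. $t$, $\mathcal L_{u(t)}g\equiv0$ on all of $M$'', because the null set of times depends on $x$.

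To handle this, I would take $t_0=t$ in the hypothesis and use Lebesgue points. For each fixed $x$, the identity $G(t)-G(t_0)=\int_{t_0}^t A^\top(\mathcal L_{u}g)(X)A\,d\tau$ holds. By continuity of $(\tau,y)\mapsto$ the integrand away from $u$'s time dependence, at every Lebesgue point $t_0$ of $\tau\mapsto u(\tau)\in \mathcal C^{1}$ (in the Bochner sense) the difference quotient tends to $(\mathcal L_{u(t_0)}g)(x)$. This gives $(\mathcal L_{u(t_0)}g)(x)=0$ for all $x$ simultaneously, because the set of Lebesgue points does not depend on $x$. Alternatively, one can restrict to a countable dense set of $x$ and use continuity of $\mathcal L_{u(t)}g$ in $x$.

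For the converse, suppose $\mathcal L_{u(t)}g=0$ for a.e. $t$. Then $\partial_t G=0$ a.e., so $G$ is constant and $G(t)=G(t_0)=g(x)$ whenever the trajectory stays in one chart. To get all times, cover the trajectory by finitely many time intervals with charts. On each interval, $X_{t,s}$ is an isometry at the relevant point, and the composition $X_t=X_{t,s}\circ X_{s}$ then propagates the isometry property to every $t\in\Rr$.
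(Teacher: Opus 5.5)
Your proposal is correct and follows the same route as the paper. You differentiate the pulled-back metric $g_{ij}(X_{t,t_0})\partial_\mu X^i_{t,t_0}\partial_\nu X^j_{t,t_0}$ along the flow, identify the bracket $u^k\partial_k g_{ij}+g_{kj}\partial_i u^k+g_{ik}\partial_j u^k$ with $\nabla_i u_j+\nabla_j u_i$ via the Christoffel symbols, use invertibility of $\Jac X_{t,t_0}$, and for the converse integrate $\partial_t G=0$. Your Lebesgue-point argument is a more careful version of the step the paper handles by ``evaluating at $t=t_0$'', where the exceptional null set of times genuinely depends on $(t_0,x)$; likewise, your chart covering in the converse tightens the paper's single-chart computation along the whole trajectory.
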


\begin{proof} Fix $t_0\in \Rr$ and $x \in M$. Define $Y_t(x)=X_{t,t_0}(x)$. In coordinates, we compute the derivative in time of
\[
I_{\mu \nu}(x,t_0,t) = g_{ij}(Y_{t}) \partial_\mu Y_t^i \partial_\nu Y_t^j.
\]
We get for almost every $t$
\begin{equation*}
    \partial_t I_{\mu \nu}(x,t_0,t) = \partial_k g_{ij}(Y_{t}) \partial_t Y_t^k \partial_\mu Y_t^i \partial_\nu Y_t^j + g_{ij}(Y_t) \partial_t\partial_\mu Y_t^i \partial_\nu Y_t^j + g_{ij}(Y_t) \partial_\mu Y_t^i \partial_t\partial_\nu Y_t^j.
\end{equation*}
Using $\partial_t Y_t = u(t,Y_t(x))$ and 
\[
    \partial_t\partial_{\mu} Y_t^i = \partial_{\ell} u^i(t,Y_t)\partial_\mu Y_t^{\ell}, 
\]
we obtain that for almost every $t$ there holds 
\begin{multline*}
    \partial_t I_{\mu \nu}(x,t_0,t) = \\
    \partial_k g_{ij}(Y_t) u^k(t,Y_t) \partial_\mu Y_t^i \partial_\nu Y_t^j + g_{ij}(Y_t) \partial_{\ell} u^i(t,Y_t)\partial_\mu Y_t^{\ell}\partial_\nu Y_t^j + g_{ij}(Y_t) \partial_\mu Y_t^i \partial_{\ell} u^j(t,Y_t)\partial_\nu Y_t^{\ell}.
\end{multline*}
Relabelling indices allows to rewrite the above as 
\[
    \partial_t I_{\mu \nu}(x,t_0,t) = \left(u^k\partial_kg_{ij} + g_{kj}\partial_iu^k + g_{ik}\partial_ju^k\right)\partial_{\mu}Y_t^i \partial_{\nu}Y^j_t,
\]
where $u^k=u^k(t,Y_t)$ and $g_{ij}=g_{ij}(Y_t)$.

The isometry condition writes for~\(t\in (t_0-\delta(t_0,x), t_0+\delta(t_0,x))\)
\[
    g_{\mu \nu}(x) = I_{\mu\nu} (x,t,t_0)
\]
and thus for almost all~\(t\in (-\delta(t_0,x), \delta(t_0,x))\)
\[
\partial_t I_{\mu\nu} (x,t,t_0) = 0.
\]
The matrix $(\partial_\mu Y^i_{t,t_0})_{i,\mu}$ is invertible (it is the matrix of~\(\Jac X_{t,t_0}\)) and therefore we infer 
\[
    u^k\partial_kg_{ij} + g_{kj}\partial_iu^k + g_{ik}\partial_ju^k=0, \qquad i,j \in \{1, \dots, d\}.
\]
Evaluating at $t=t_0$ and renaming $t_0$ as $t$ yields that for almost every $t$ and all $x\in M$ there holds 
\[
    u^k(t,x)\partial_kg_{ij}(x) + g_{kj}(x)\partial_iu^k(t,x) + g_{ik}(x)\partial_ju^k(t,x)=0, \qquad i,j \in \{1, \dots, d\}.
\]
Next, we show that 
\[
    \nabla_{i} u_j(t) + \nabla_j u_i(t) = 0
\]
where 
\[
    \nabla_i u_j(t) = \partial_i u_j(t)  - \Gamma^k_{\;\; i j } u_k(t). 
\]
Lowering indices $u_j(t)=g_{jk}u^k(t)$ gives: 
\[
    \partial_i u_j(t) = \partial_i(g_{j\ell}u^\ell(t)) = (\partial_i g_{j\ell})u^\ell(t)+g_{j\ell}\partial_i u^\ell(t).
\]
It follows that 
\[
    \nabla_i u_j(t) = (\partial_i g_{j\ell})u^\ell(t) + g_{j\ell}\partial_i u^\ell(t) - \Gamma^k_{\;\:ij}g_{k\ell}u^\ell(t).
\]
\[
    \nabla_j u_i(t) = (\partial_j g_{i\ell})u^\ell(t) + g_{i\ell}\partial_j u^\ell(t)- \Gamma^k_{\;\:ji}g_{k\ell}u^\ell(t).
\]
With the identity $\Gamma^k_{\;\:ij}=\Gamma^k_{\;\:ji}$, we now have 
\[
    \nabla_i u_j(t)+\nabla_j u_i(t) =  g_{j\ell}\partial_i u^\ell(t) + g_{i\ell}\partial_j u^\ell(t) + u^\ell(t) (\partial_i g_{j\ell} + \partial_j g_{i\ell} - 2g_{k\ell}\Gamma^k_{\;\;ij}).
\]
It remains to use 
\[
    g_{k\ell}\Gamma^k_{\;\;ij} = \frac{1}{2}(\partial_i g_{j\ell} + \partial_j g_{i\ell} - \partial_\ell g_{ij}),
\]
which implies
\[
	\partial_i g_{j\ell} + \partial_j g_{i\ell} - 2g_{k\ell}\Gamma^k_{ij} = \partial_\ell g_{ij}.
\]
Therefore, 
\[
	\nabla_i u_j(t)+\nabla_j u_i(t) = g_{j\ell}\partial_i u^\ell(t) + g_{i\ell}\partial_j u^\ell(t) + u^\ell(t)\partial_\ell g_{ij}=0,
\]
as claimed. 

Conversely, if~\(\nabla_i u_j(t)+\nabla_j u_i(t) = 0\) for allmost all~\(t\) and all~\(x\in M\), we deduce that
\[
 u^k(t,x)\partial_kg_{ij}(x) + g_{kj}(x)\partial_iu^k(t,x) + g_{ik}(x)\partial_ju^k(t,x)=0, \qquad i,j \in \{1, \dots, d\}.
\]
Recall that
\[
    \partial_t I_{\mu\nu}(x,0,t) = \left(u^k\partial_kg_{ij} + g_{kj}\partial_iu^k + g_{ik}\partial_ju^k\right)\partial_{\mu}X_t^i(x) \partial_{\nu}X^j_t(x),
\]
where $u^k=u^k(t,X_t(x))$ and $g_{ij}=g_{ij}(X_t(x))$. We deduce
\[
    \partial_t I_{\mu\nu}(x,0,t) = 0.
\]
Therefore,
\[
 g_{ij}(X_{t}(x)) \partial_\mu X_t^i(x) \partial_\nu X_t^j(x) = I_{\mu \nu} (x,0,t) = I_{\mu \nu} (x,0,0) = g_{\mu \nu}(x)
\]
which translates as the fact that~\(\Jac X_t(x)\) is an isometry.
\end{proof}

\end{document}